\documentclass[12pt,reqno]{amsart}
\usepackage{graphicx}
\usepackage{amssymb,amsmath}
\usepackage{amsthm}
\usepackage{color,graphicx}
\usepackage{hyperref}
\usepackage{color}
\usepackage{epstopdf}
\usepackage{lpic}
\usepackage{enumerate}
\usepackage{mathrsfs}

\setlength{\textwidth}{16cm} 
\setlength{\textheight}{20.5cm}
\addtolength{\oddsidemargin}{-1.0cm} 
\addtolength{\evensidemargin}{-2.0cm}

\newcommand{\be}{\begin{equation}}
\newcommand{\ee}{\end{equation}}

\newcommand{\R}{{\mathbb R}}

\newcommand{\Bs}{\mathscr{B}_{\sigma}}
\renewcommand{\P}{\mathscr P}
%%%%%%%%%%%%%%%%%%%%%%%%%%%%%%%%%%%%%%%%%%%%%%%%%%%%%%%%%%%%%%%%%%%%%%%
%%%%%%%%%%%%%%%%%%%%%%%%%%%%%%%%%%%%%%%%%%%%%%%%%%%%%%%%%%%%%%%%%%%%%%%

%%%%%%%%%%%%%%%%%%%%%%%%%%%%%%%%%%%%%%%%%%%%%%%%%%%%%%%%%%%%%%%%%
%%%%%%%%%%%%%%%%%%%%%%%%%%%%%%%%%%%%%%%%%%%%%%%%%%%%%%%%%%%%%%%%%
%\newcommand{\bbox}{\rule[1mm]{1ex}{1ex}}
%\newcommand{\seccion}[1]{\section{#1}\setcounter{equation}{0}}
\numberwithin{equation}{section}
\numberwithin{figure}{section}

\newtheorem{theorem}{Theorem}[section]

\newtheorem{remark}[theorem]{Remark}
\newtheorem{lemma}[theorem]{Lemma}
\newtheorem{corollary}[theorem]{Corollary}

%%%%%%%%%%%%%%%%%%%%%%%%%%%%%%%%%%%%%%%%%%%%%%%%%%%%%%%%%%%%%%%%
%%%%%%%%%%%%%%%%%%%%%%%%%%%%%%%%%%%%%%%%%%%%%%%%%%%%%%%%%%%%%%%%

\begin{document}

\title[Smooth periodic waves in the Degasperis--Procesi equation]{Stability of smooth periodic traveling waves \\ in the Degasperis--Procesi equation}

\author{Anna Geyer}
\address[A. Geyer]{Delft Institute of Applied Mathematics, Faculty Electrical Engineering, Mathematics and Computer Science, Delft University of Technology, Mekelweg 4, 2628 CD Delft, The Netherlands}
\email{A.Geyer@tudelft.nl}

\author{Dmitry E. Pelinovsky}
\address[D.E. Pelinovsky]{Department of Mathematics and Statistics, McMaster University,	Hamilton, Ontario, Canada, L8S 4K1}
\email{dmpeli@math.mcmaster.ca}

\date{\today}
\maketitle

\begin{abstract} 
We derive  a precise energy stability criterion for smooth periodic waves in the Degasperis--Procesi (DP) equation. Compared to the Camassa-Holm (CH) equation, the number of negative eigenvalues of an associated Hessian operator changes in the existence region of smooth perodic waves. We utilize properties of the period function with respect to 
two parameters in order to obtain a smooth existence curve for the family
of smooth periodic waves of a fixed period. The energy stability condition  
is derived on parts of this existence curve which correspond to either one or two negative eigenvalues of the Hessian operator. We show numerically 
that the energy stability condition is satisfied on either part of the curve and prove analytically that it holds in a neighborhood of the boundary of the existence region of smooth periodic waves. 
\end{abstract} 

%%%%%%%%%%%%%%%%%%%%%%%%%%%%%%%%%%%%%%%%%%%%%%%%%%%%%%%%%%%%%%%%%%%%%%%%%%%%%%%%%%%%%%%%%%%%%%%%%%%%%%%%%%%%%%%%%%
\section{Introduction}

The Degasperis-Procesi (DP) equation 
\begin{equation}
\label{DP}
u_t-u_{txx}+4uu_x = 3u_xu_{xx}+uu_{xxx}
\end{equation}
has a special role in the modeling of fluid motion. It was derived in \cite{dp} as a transformation of the integrable hierarchy of KdV equations, with the same asymptotic accuracy as the Camassa--Holm (CH) equation \cite{Cam}. Although a more general family of model equations can also be derived by using this method  \cite{dhh,Dullin}, only the DP and CH equations are integrable with the use of the inverse scattering transform. It was shown in 
\cite{Const4,rossen,Johnson} that the DP and CH equations describe the horizontal velocity $u = u(t,x)$ for the unidirectional propagation of waves of a shallow water flowing over a flat bed at a certain depth. A review of applicability of these model equations as approximations of  peaked waves in fluids was recently given in \cite{Lund}.

In the present paper, we are concerned with smooth traveling wave solutions, for which the DP and CH equations have been justified as model equations in hydrodynamics \cite{Const4}. Existence of smooth periodic 
traveling waves has been well understood by using ODE methods \cite{Len1,Len2}. However, stability of smooth periodic traveling waves 
was considered to be a difficult problem in the functional-analytic framework, 
even though integrability implies their stability  
due to the structural stability of the Floquet spectrum of the associated linear system \cite{Len3}. Only very recently in \cite{GMNP}, we derived an energy stability criterion for the smooth periodic traveling waves of the CH equation by using its Hamiltonian formulation. 

For smooth solitary waves, orbital stability  was obtained for the CH equation in \cite{CS-02} 
and spectral and orbital stability for the DP equation was obtained in \cite{Liu-21, Liu-22}. The energy stability criterion 
for the smooth solitary waves was derived for the entire family of the generalized CH equations \cite{LP-22} and was shown to be satisfied asymptotically and numerically. A recent work \cite{Long} 
used the period function to show that the energy stability criterion is satisfied analytically for the entire family of smooth solitary waves.\\

{\em The purpose of this work is to derive an energy stability criterion 
for the smooth periodic traveling waves in the DP equation.} \\

Let us briefly comment on the various Hamiltonian formulations which exist both for the CH and DP equations. These two equations belong to a larger class of  generalized CH equations, the so-called $b$-family, which reduces to CH for $b=2$ and to DP for $b=3$. As far as we know, only one Hamiltonian formulation exists for general $b$, which was obtained in \cite{dhh-proc}  and used in the stability analysis of smooth solitary waves in \cite{LP-22}, while one more (alternative) formulation exists for $b=3$ and two more alternative formulations exist for $b=2$. In \cite{GMNP}, we used the two alternative formulations to study spectral stability of the smooth periodic waves. Here we will only use the alternative formulation which exists for $b = 3$. Whether the Hamiltonian formulation from \cite{dhh-proc} can also be adopted to the study of spectral stability of smooth periodic waves for the $b$-family is left for further studies. 

We  consider the DP equation (\ref{DP}) in the periodic domain $\mathbb{T}_L := [0,L]$ of length $L > 0$. For notational simplicity, we write 
$H^s_{\rm per}$ instead of $H^s(\mathbb{T}_L)$ for the Sobolev space of $L$-periodic functions with index $s \geq 0$. The DP equation (\ref{DP}) on $\mathbb{T}_L$ formally conserves the mass, momentum, and energy given respectively by 
\begin{equation}
\label{Mu}
M(u)= \oint u dx,
\end{equation}
\begin{equation}
\label{Eu}
E(u)=\frac{1}{2} \oint  u (1-\partial_x^2) (4 - \partial_x^2)^{-1} u dx,
\end{equation}
and
\begin{equation}
\label{Fu}
F(u)=\frac{1}{6} \oint u^3 dx.
\end{equation}
The standard Hamiltonian structure for the DP equation (\ref{DP}) is given by 
\begin{equation}
\label{sympl-1}
\frac{du}{dt} = J \frac{\delta F}{\delta u}, \quad 
J = -(1-\partial_x^2)^{-1} (4-\partial_x^2) \partial_x, 
\end{equation}
where $J$ is a well-defined operator from $H^{s+1}_{\rm per}$ to $H^{s}_{\rm per}$ for every $s \geq 0$ and $\frac{\delta F}{\delta u} = \frac{1}{2} u^2$. The evolution problem (\ref{sympl-1}) is well-defined for local solutions
$u \in C((-t_0,t_0),H^s_{\rm per}) \cap C^1((-t_0,t_0),H^{s-1}_{\rm per})$ with $s > \frac{3}{2}$, see \cite{Escher2008}, where $t_0 > 0$ is the local existence time.

Smooth traveling waves of the form 
$u(t,x) = \phi(x-ct)$ with $c - \phi > 0$ 
are obtained from the critical points of the augmented 
energy functional
\begin{equation}
\label{action-1}
\Lambda_{c,b}(u) := c E(u) - F(u) - \frac{b}{4} M(u),
\end{equation}
where $b$ is a parameter obtained after integration of the third-order differential equation \eqref{third-order} satisfied by the traveling wave profile $\phi$, see Section \ref{sec-2}. After two integrations of the third-order equation (\ref{third-order}) with integration constants $a$ and $b$, all smooth periodic wave solutions 
with the profile $\phi$ can be found from the first-order 
invariant
\begin{equation}
\label{quadra}
(c-\phi)^2 (\phi'^2 - \phi^2 - b) + a = 0.
\end{equation}

The second variation of the augmented energy functional (\ref{action-1}) is determined by an associated Hessian operator  $\mathcal{L} : L^2_{\rm per}  \to L^2_{\rm per}$ given by 
\begin{equation}
\mathcal{L} := c - \phi - 3 c (4 - \partial_x^2)^{-1}.
\label{hill} 
\end{equation} 
The  operator $\mathcal L$ is  self-adjoint and bounded as the sum of the bounded multiplication operator $(c-\phi)$ and the compact operator $- 3 c (4 - \partial_x^2)^{-1}$ in $L^2_{\rm per}$. Since $c - \phi > 0$, the continuous spectrum of $\mathcal{L}$ is strictly positive, hence $\mathcal{L}$ has finitely many negative eigenvalues of finite algebraic multiplicities and a zero eigenvalue of finite algebraic multiplicity.

The first result of this paper is about the existence of smooth periodic traveling waves with profile $\phi$ satisfying the first-order invariant (\ref{quadra}), and the number  of negative eigenvalues of $\mathcal{L}$ given by (\ref{hill}).

\begin{theorem}
	\label{theorem-existence}
	For a fixed $c > 0$, smooth periodic solutions of the first-order invariant (\ref{quadra}) with  profile $\phi \in H^{\infty}_{\rm per}$ satisfying $c - \phi > 0$ exist in an open, simply connected region on the $(a,b)$ plane enclosed by three boundaries: 
	\begin{itemize}
		\item $a = 0$ and $b \in (-c^2,0)$, where the periodic solutions are peaked, 
		\item $a = a_+(b)$ and $b \in (0,\frac{1}{8} c^2)$, where the solutions have infinite period,
		\item $a = a_-(b)$ and $b \in (-c^2,\frac{1}{8} c^2)$, where the solutions are constant, 
	\end{itemize}
	where $a_+(b)$ and $a_-(b)$ are  smooth functions of $b$. For every point inside the region, the periodic solutions are smooth functions of $(a,b)$ and their period is strictly increasing in $b$ for every fixed $a \in (0,\frac{27}{256} c^4)$. There exists a smooth curve $a = a_0(b)$ for $b \in (-\frac
{2}{9}c^2,0)$ in the interior of the existence region such that the Hessian operator $\mathcal{L}$ in $L^2_{\rm per}$ has only one simple negative eigenvalue above the curve and two simple negative eigenvalues (or a double negative eigenvalue) below the curve. The rest of its spectrum  for $a \neq a_0(b)$ includes a simple zero eigenvalue and a strictly positive spectrum bounded away from zero. Along the curve $a = a_0(b)$ the Hessian operator $\mathcal{L}$ has only one simple negative eigenvalue, a double zero eigenvalue, and the rest of its spectrum is strictly positive. 
\end{theorem}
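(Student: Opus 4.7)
The plan is to organize the proof in three stages, with the period function $T(a,b)$ as the key link between them: identify the existence region by phase-plane analysis of (\ref{quadra}); establish monotonicity of the period in $b$; and carry out the spectral analysis of $\mathcal{L}$ by connecting the dimension of $\ker\mathcal{L}$ to a bifurcation condition on $T(a,b)$. Writing (\ref{quadra}) as $(\phi')^2 = V(\phi;a,b) := \phi^2 + b - a/(c-\phi)^2$ on the half-line $\phi<c$, smooth periodic orbits correspond to intervals $[\phi_-,\phi_+]$ on which $V\geq 0$ with simple zeros at the endpoints. The critical points of $V$ solve $\phi(c-\phi)^3 = a$; the auxiliary function $g(\phi) = \phi(c-\phi)^3$ attains maximum $\frac{27}{256}c^4$ at $\phi = c/4$, so for $a \in (0, \frac{27}{256}c^4)$ there are exactly two critical points $\phi_* < c/4 < \phi^* < c$. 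Using $\phi'' = \phi - a/(c-\phi)^3$ one checks that $\phi^*$ is a center and $\phi_*$ a saddle; closed orbits encircle $\phi^*$ and are bounded by the homoclinic through $\phi_*$. The simultaneous conditions $V = V' = 0$ at each equilibrium reduce to the quadratic $2\phi_{\rm eq}^2 - c\phi_{\rm eq} + b = 0$, whose two roots give the boundaries $a = a_-(b)$ (center collapse) and $a = a_+(b)$ (saddle homoclinic, infinite period), while the third boundary $a = 0$ arises as the singular limit where orbits touch $\phi = c$ and become peaked. Evaluating the explicit formulas at the corners $b\in\{-c^2,0,c^2/8\}$ recovers the $b$-ranges in the theorem, and smoothness of $a_\pm$ is immediate. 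For $T(a,b) = 2\int_{\phi_-}^{\phi_+}d\phi/\sqrt{V}$, I would desingularize the endpoints via a substitution like $\phi = \frac{1}{2}(\phi_++\phi_-) + \frac{1}{2}(\phi_+-\phi_-)\cos\theta$ and differentiate in $b$ using $\partial_b V = 1$; after an integration by parts cancelling endpoint contributions, $\partial_b T$ should appear as the integral of a manifestly positive function, yielding strict monotonicity.

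For the spectral analysis, I would first establish the identities
\begin{equation*}
\mathcal{L}\phi' = 0, \qquad \mathcal{L}\partial_a\phi = 0, \qquad \mathcal{L}\partial_b\phi = \tfrac{1}{4},
\end{equation*}
obtained by differentiating the Euler--Lagrange equation $c(1-\partial_x^2)(4-\partial_x^2)^{-1}\phi = \frac{1}{2}\phi^2 + \frac{b}{4}$ in $x$, $a$, and $b$ respectively. The middle identity uses crucially that this equation is $a$-independent, so $\partial_a\phi$ lies in the pointwise kernel of $\mathcal{L}$ on $\mathbb{R}$. The two solutions $\phi'$ and $\partial_a\phi$ form a fundamental system for the second-order ODE associated with $\mathcal{L}u = 0$: $\phi'$ is always $T(a,b)$-periodic, while differentiating $\phi(x+T) = \phi(x)$ in $a$ gives $\partial_a\phi(x+T) - \partial_a\phi(x) = -T_a(a,b)\,\phi'(x)$, so $\partial_a\phi$ is $T$-periodic iff $\partial_a T(a,b) = 0$. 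Consequently, on $L^2_{\rm per}(\mathbb{T}_L)$ with $L = T(a,b)$, the kernel of $\mathcal{L}$ is one-dimensional off the curve $a_0(b) := \{\partial_a T = 0\}$ and two-dimensional on it. Combining boundary asymptotics of $T$ along fixed-$b$ slices (finite near the center boundary, infinite near the saddle and peakon boundaries) with the implicit function theorem then shows that $a_0(b)$ is a smooth curve on the stated interval $b \in (-\frac{2}{9}c^2, 0)$. Since $\mathcal{L}$ is self-adjoint with analytic dependence on $(a,b)$ and positive continuous spectrum bounded away from zero, its eigenvalues can cross zero only along $a_0(b)$; evaluating the Morse index $n(\mathcal{L})$ at a reference point near the center boundary $a = a_-(b)$, where $\phi$ is nearly constant and $\mathcal{L}$ reduces to a constant-coefficient operator diagonalisable by Fourier modes, gives $n(\mathcal{L}) = 1$, and continuity then forces $n(\mathcal{L}) = 2$ on the other component. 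On $a_0(b)$ the double zero is precisely the two-dimensional kernel identified above.

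The main obstacle I anticipate is the transversality estimate $\partial_{aa}T \neq 0$ along $a_0(b)$ together with the sharp delineation of the range $b \in (-\frac{2}{9}c^2, 0)$. Both require fairly precise asymptotics of the period function at each of the three boundaries of the existence region, which in the DP case are likely more delicate than in the CH case because $V$ develops a simple pole at the peakon boundary $\phi = c$ rather than extending smoothly.
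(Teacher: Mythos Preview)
Your architecture is right in outline---phase-plane existence, period monotonicity, and the link between $\ker\mathcal{L}$ and a bifurcation of the period---but two of the three steps have genuine gaps.

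The most serious one is the identification of the curve $a_0(b)$. Your claimed boundary asymptotic at the peaked side is false: the limiting peakon at $a=0$ is $\phi(x)=c\cosh(\tfrac{L}{2}-|x|)/\cosh(\tfrac{L}{2})$ with \emph{finite} period $L$ determined by $b=-c^{2}\operatorname{sech}^{2}(L/2)$. Thus for $b\in(-c^2,0)$ the fixed-$b$ slice runs between two boundaries at both of which $T$ is finite, and boundary behaviour alone forces neither existence nor uniqueness of a critical point of $a\mapsto T(a,b)$; nor does it single out the threshold $b=-\tfrac{2}{9}c^{2}$. The paper obtains all of this as a separate, hard lemma on the period function: after recasting the problem via (\ref{CHode}) in the phase plane, one computes the first period constants to locate $b=-\tfrac{2}{9}c^{2}$ (where the leading constant changes sign), shows $\partial_a T\to+\infty$ at the peaked boundary, and then uses the Garijo--Villadelprat $\sigma$-balance/resultant machinery together with Sturm-sequence counts to bound the number of critical periods by one. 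Your anticipated obstacle $\partial_{aa}T\neq 0$ is a \emph{consequence} of this analysis, not an independent ingredient you can supply.

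Two smaller gaps. First, $\mathcal{L}=(c-\phi)-3c(4-\partial_x^{2})^{-1}$ is nonlocal, so there is no ``second-order ODE associated with $\mathcal{L}u=0$'' on which $\{\phi',\partial_a\phi\}$ is a fundamental system; in particular $(4-\partial_x^{2})^{-1}\partial_a\phi$ is not defined, since $\partial_a\phi$ is neither periodic nor decaying. The paper passes to $v=(4-\partial_x^{2})^{-1}w$, which converts $\mathcal{L}w=\lambda w$ into a genuine Schr\"odinger problem $\mathcal{K}(\lambda)v=0$ with $\mathcal{K}(\lambda)=-\partial_x^{2}+(c-4\phi-4\lambda)/(c-\phi-\lambda)$; the fundamental system for $\mathcal{K}(0)$ is $\{\nu',\partial_a\nu\}$, obtained by differentiating the \emph{local} relation $c(\nu-\nu'')=\tfrac12\phi^{2}+\tfrac{b}{4}$. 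The Floquet discriminant of $\mathcal{K}(0)$ then yields directly that $n(\mathcal{L})=1$ when $\partial_a T<0$ and $n(\mathcal{L})=2$ when $\partial_a T>0$, which is sharper than your continuity argument from the constant-wave limit. Second, the claim that $\partial_bT$ becomes a manifestly positive integral after integration by parts is unsubstantiated; with $\partial_bV=1$ the regularised integrand is not sign-definite, and the paper instead verifies Chicone's convexity criterion for $V/(V')^{2}$, which again requires nontrivial polynomial and Sturm-sequence computations.
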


\begin{figure}[htb!]
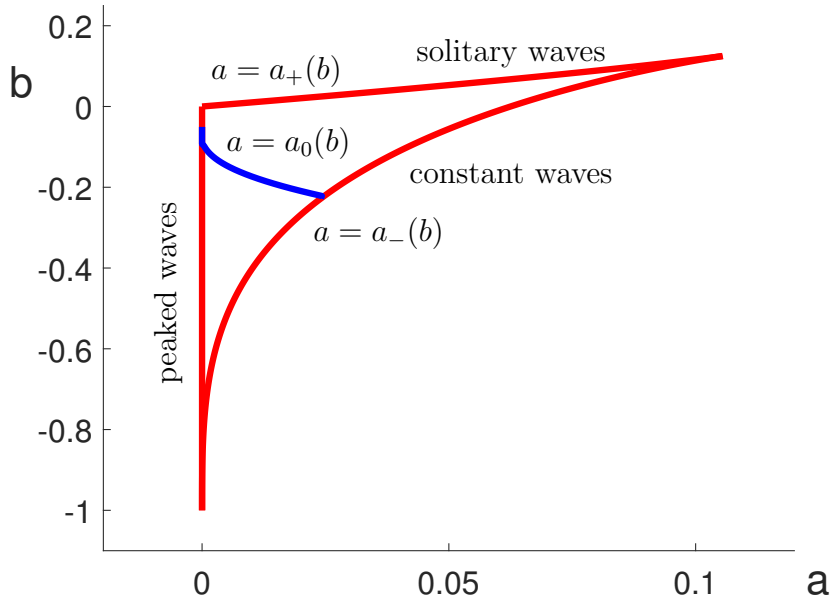

	\begin{lpic}{region(0.8,0.8)}
		\lbl{87,95;solitary waves}
		\lbl{48,92; $a=a_+(b)$}
		\lbl{50,80; $a=a_0(b)$}
		\lbl{87,75; constant waves}
		\lbl{65,65; $a=a_-(b)$}
		\lbl{30,55;\rotatebox{90}{peaked waves}}
	\end{lpic}
	\caption{The existence region of smooth periodic solutions 
		of the first-order invariant (\ref{quadra}) 
		on the parameter plane $(a,b)$ for $c = 1$ enclosed by three boundaries (red lines). The blue line shows the curve $a = a_0(b)$ which separates the cases of one and two negative eigenvalue of $\mathcal{L}$.} \label{fig-domain}	
\end{figure} 

The three  curves bounding  the existence region of smooth periodic waves in Theorem \ref{theorem-existence} are shown in Figure \ref{fig-domain} for $c = 1$. The curve in the interior of the existence region is the curve $a = a_0(b)$, which was found numerically by plotting the period function of the periodic solutions of Theorem \ref{theorem-existence} versus $a$ for fixed $b$ and detecting its maximum if it exists, see Lemmas \ref{lem-nonmonotonicity} and \ref{lem-Floquet} below. 

The  transformation 
\begin{equation}
\label{scaling}
\phi(x) = c \varphi(x), \quad b = c^2 \beta, \quad a = c^4 \alpha
\end{equation}
normalizes the parameter $c$ to unity with $\varphi$, $\beta$, and $\alpha$ satisfying the same equation (\ref{quadra})  but with $c = 1$. Hence, the smooth periodic waves are uniquely determined by the free parameters $(a,b)$ and $c = 1$ can be used everywhere. Similarly, 
although we only consider the case of right-propagating waves with $c > 0$, all results can be extended to the left-propagating waves with $c < 0$ 
by using the scaling transformation (\ref{scaling}).

Spectral stability of  smooth periodic travelling waves with respect to co-periodic perturbations is determined 
by the spectrum of the linearized operator $J \mathcal{L}$ in $L^2_{\rm per}$, with $J$ given in \eqref{sympl-1}. 
Since $J$ is a skew-adjoint operator  and $\mathcal{L}$ is self-adjoint, the spectrum of the linearized operator  $J \mathcal{L}$  is symmetric with respect to $i \mathbb{R}$ \cite{HK08}. Therefore,  the periodic wave is spectrally stable if the spectrum of $J \mathcal{L}$ in $L^2_{\rm per}$ is located on $i \mathbb{R}$. The second result of this paper gives the energy criterion for the spectral stability  of the smooth periodic waves in the DP equation (\ref{DP}).

\begin{theorem}
	\label{theorem-stability}
	For a fixed $c > 0$ and a fixed period $L > 0$, there exists 
	a $C^1$ mapping $a \mapsto b = \mathcal{B}_L(a)$ for $a \in (0,a_L)$ with  some $L$-dependent $a_L \in (0,\frac{27}{256} c^4)$ 
	and a $C^1$ mapping $a \mapsto \phi = \Phi_L(\cdot,a) \in H^{\infty}_{\rm per}$ of smooth $L$-periodic solutions along the curve $b = \mathcal{B}_L(a)$.	Let 
	$$
	\mathcal{M}_L(a) := M(\Phi_L(\cdot,a)) \quad \mbox{\rm and} \quad 
	\mathcal{F}_L(a) := F(\Phi_L(\cdot,a)),
	$$
	where $M(u)$ and $F(u)$ are given by (\ref{Mu}) and (\ref{Fu}). 
	The $L$-periodic wave with profile $\phi = \Phi_L(\cdot,a)$ such that 
	$\mathcal{B}_L'(a) \neq 0$ is spectrally stable if the mapping
	\begin{equation}
	\label{stability-criterion}
	a \mapsto \frac{\mathcal{F}_L(a)}{\mathcal{M}_L(a)^3}
	\end{equation}
	is strictly decreasing and,  for $\mathcal{B}_L'(a) < 0$, if additionally the mapping $a \mapsto \mathcal{M}_L(a)$ is strictly increasing. The stability criterion holds true for every point in a neighborhood of the boundary $a = a_-(b)$.
\end{theorem}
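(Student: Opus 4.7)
My plan is to show spectral stability by controlling the spectrum of the linearized operator $J\mathcal{L}$ via a Hamiltonian instability index theorem of Kapitula--Kevrekidis--Sandstede or Chugunova--Pelinovsky type. The starting point is to pin down the generalized kernel of $J\mathcal{L}$. Translation invariance gives $\mathcal{L}\phi' = 0$, and since $\ker J = \mathrm{span}\{1\}$ (the operator $J = -(1-\partial_x^2)^{-1}(4-\partial_x^2)\partial_x$ annihilates constants and nothing else), any $\psi$ with $\mathcal{L}\psi \in \ker J$ lies in $\ker(J\mathcal{L})$ as well. Differentiating the Euler--Lagrange equation $cE'(\Phi_L) - F'(\Phi_L) - \tfrac{1}{4}\mathcal{B}_L(a)M'(\Phi_L) = 0$ in $a$, using $M'(u) = 1$, gives the key identity
\[
\mathcal{L}(\partial_a \Phi_L) \;=\; \tfrac{1}{4}\,\mathcal{B}_L'(a),
\]
so under the standing hypothesis $\mathcal{B}_L'(a) \ne 0$ we have $\partial_a\Phi_L \in \ker(J\mathcal{L})$, $\mathcal{L}^{-1}(1) = \tfrac{4}{\mathcal{B}_L'(a)}\partial_a\Phi_L$, and in particular
\[
\langle \mathcal{L}^{-1}(1),\,1\rangle \;=\; \frac{4\,\mathcal{M}_L'(a)}{\mathcal{B}_L'(a)}.
\]

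Next I would apply the instability index identity
\[
N_{\mathrm{real}} + 2 N_{\mathrm{complex}} + 2 N_{\mathrm{imag}}^- \;=\; n(\mathcal{L}) - n(D),
\]
where $D$ is the Gram-type constraint matrix built from the mass Casimir direction $1 \in \ker J$ together with the relevant symmetry data. Spectral stability follows once $n(D) = n(\mathcal{L})$. On the branch of Theorem~\ref{theorem-existence} with $n(\mathcal{L}) = 1$, the matrix $D$ reduces essentially to the scalar $\langle \mathcal{L}^{-1}(1),1\rangle$ above; on the branch with $n(\mathcal{L}) = 2$ I would enlarge $D$ to a $2\times 2$ matrix by adjoining a second constraint direction supplied by the DP equation's scaling symmetry $u(t,x)\mapsto \lambda u(\lambda t,x)$, under which $F$ scales like $\lambda^3$ and $M$ like $\lambda$. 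Computing the entries of $D$ through derivatives of $\mathcal{M}_L$ and $\mathcal{F}_L$ along the curve and simplifying by means of integral identities that express these derivatives in terms of the Lagrange multiplier $\mathcal{B}_L$, the scale-invariant combination $\mathcal{F}_L(a)/\mathcal{M}_L(a)^3$ arises as the natural quantity. A direct sign analysis then shows that the conditions $(\mathcal{F}_L/\mathcal{M}_L^3)'(a) < 0$ and, whenever $\mathcal{B}_L'(a) < 0$, the supplementary $\mathcal{M}_L'(a) > 0$ are exactly what forces $n(D) = n(\mathcal{L})$ in both regimes, thereby recovering the criterion \eqref{stability-criterion}.

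For the final assertion near $a = a_-(b)$, I would use the bifurcation of smooth periodic waves from constant states along this boundary (Theorem~\ref{theorem-existence}) and carry out a small-amplitude expansion of $\Phi_L(\cdot,a)$ and $\mathcal{B}_L(a)$ in the amplitude parameter, substituting the resulting series into $\mathcal{M}_L(a)$ and $\mathcal{F}_L(a)$. The leading-order signs of $\mathcal{B}_L'$, $\mathcal{M}_L'$ and $(\mathcal{F}_L/\mathcal{M}_L^3)'$ can then be read off explicitly, verifying the stability criterion in a full neighbourhood of $a = a_-(b)$.

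The principal obstacle is the $n(\mathcal{L}) = 2$ regime: one must identify the correct second constraint direction, justify the Hamiltonian index identity in this enlarged setting, and show algebraically that the resulting $2\times 2$ matrix $D$ is governed precisely by the scale-invariant ratio $\mathcal{F}_L/\mathcal{M}_L^3$ together with the subsidiary sign of $\mathcal{M}_L'$ dictated by $\mathrm{sign}\,\mathcal{B}_L'(a)$. Additional care is needed to justify the index formula in the presence of the non-trivial kernel of $J$ arising from the mass Casimir, and to track how the generalized kernel of $J\mathcal{L}$ varies across the curve $a = a_0(b)$ of Theorem~\ref{theorem-existence} where $\ker\mathcal{L}$ jumps in dimension.
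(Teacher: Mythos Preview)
Your overall architecture---an instability index count against a constraint matrix, followed by a Stokes expansion near $a=a_-(b)$---is exactly the paper's route. But there is a real gap in how you treat the $n(\mathcal{L})=1$ regime, and it is not where you think the obstacle lies.

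You propose that when $n(\mathcal{L})=1$ the matrix $D$ ``reduces essentially to the scalar $\langle\mathcal{L}^{-1}1,1\rangle$'', and that the ratio criterion \eqref{stability-criterion} can be recovered from this alone. It cannot. Your own computation gives $\langle\mathcal{L}^{-1}1,1\rangle = 4\,\mathcal{M}_L'(a)/\mathcal{B}_L'(a)$. Near the boundary $a=a_-(b)$ the Stokes expansion (which you correctly plan to carry out) shows $\mathcal{M}_L'(a)>0$; and on the portion of the curve with $n(\mathcal{L})=1$ one has $\partial_a\mathfrak{L}<0$, hence $\mathcal{B}_L'(a) = -\partial_a\mathfrak{L}/\partial_b\mathfrak{L} > 0$. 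The scalar is therefore \emph{positive} there, so with a single constraint $n(D)=0\neq 1=n(\mathcal{L})$ and the index count does not close. The condition $(\mathcal{F}_L/\mathcal{M}_L^3)'<0$ simply does not arise from the mass constraint alone.

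What the paper does---and what you need---is to use a $2\times 2$ constraint matrix in \emph{both} regimes. The second constraint direction is $\phi^2$, and it is present for a structural reason independent of scaling: since $J\phi^2 = -2c\,\phi'$ and $\mathcal{L}\phi'=0$, the pairing $\langle\phi^2,w\rangle$ is conserved under the linearized flow $w_t=-J\mathcal{L}w$, just as $\langle 1,w\rangle$ is conserved because $1\in\ker J$. The preimage $\mathcal{L}^{-1}\phi^2$ is obtained by differentiating the profile equation in $c$ (this is where the scaling relevance enters, but as a computational device, not as the source of the constraint): one gets $\mathcal{L}^{-1}1 = 4\,\partial_b\phi$ and $\mathcal{L}^{-1}\phi^2 = -2c\,\partial_c\phi - 2b\,\partial_b\phi$. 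Assembling the $2\times 2$ matrix and then invoking the homogeneity $\Phi_L = c\,\hat\Phi_L$, $\mathcal{M}_L = c\,\hat{\mathcal{M}}_L$, $\mathcal{F}_L = c^3\hat{\mathcal{F}}_L$ collapses its determinant to a multiple of $\frac{d}{d\beta}\bigl[\hat{\mathcal{F}}_L/\hat{\mathcal{M}}_L^3\bigr]$. The two cases of the theorem then correspond to whether one needs $\det S<0$ (one negative eigenvalue of $S$ to cancel $n(\mathcal{L})=1$) or $\det S>0$ with a negative diagonal entry (two negative eigenvalues of $S$ to cancel $n(\mathcal{L})=2$); the subsidiary condition $\mathcal{M}_L'(a)>0$ when $\mathcal{B}_L'(a)<0$ is exactly what fixes the signature in the second case. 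Your plan for the Stokes expansion near $a=a_-(b)$ is fine and matches the paper.
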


\begin{figure}[htb!]
	\includegraphics[width=10cm,height=8cm]{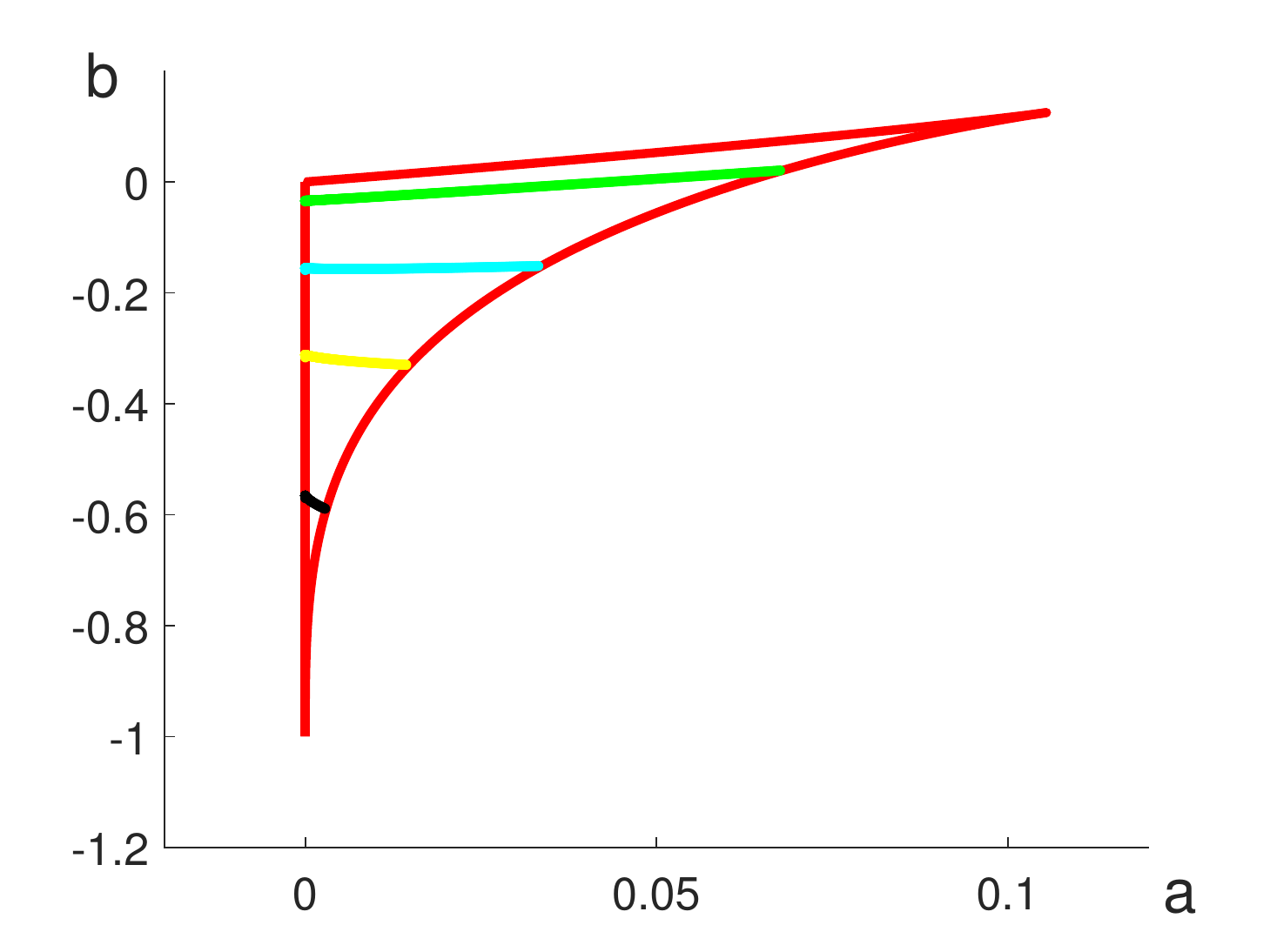}
	\includegraphics[width=7cm,height=6cm]{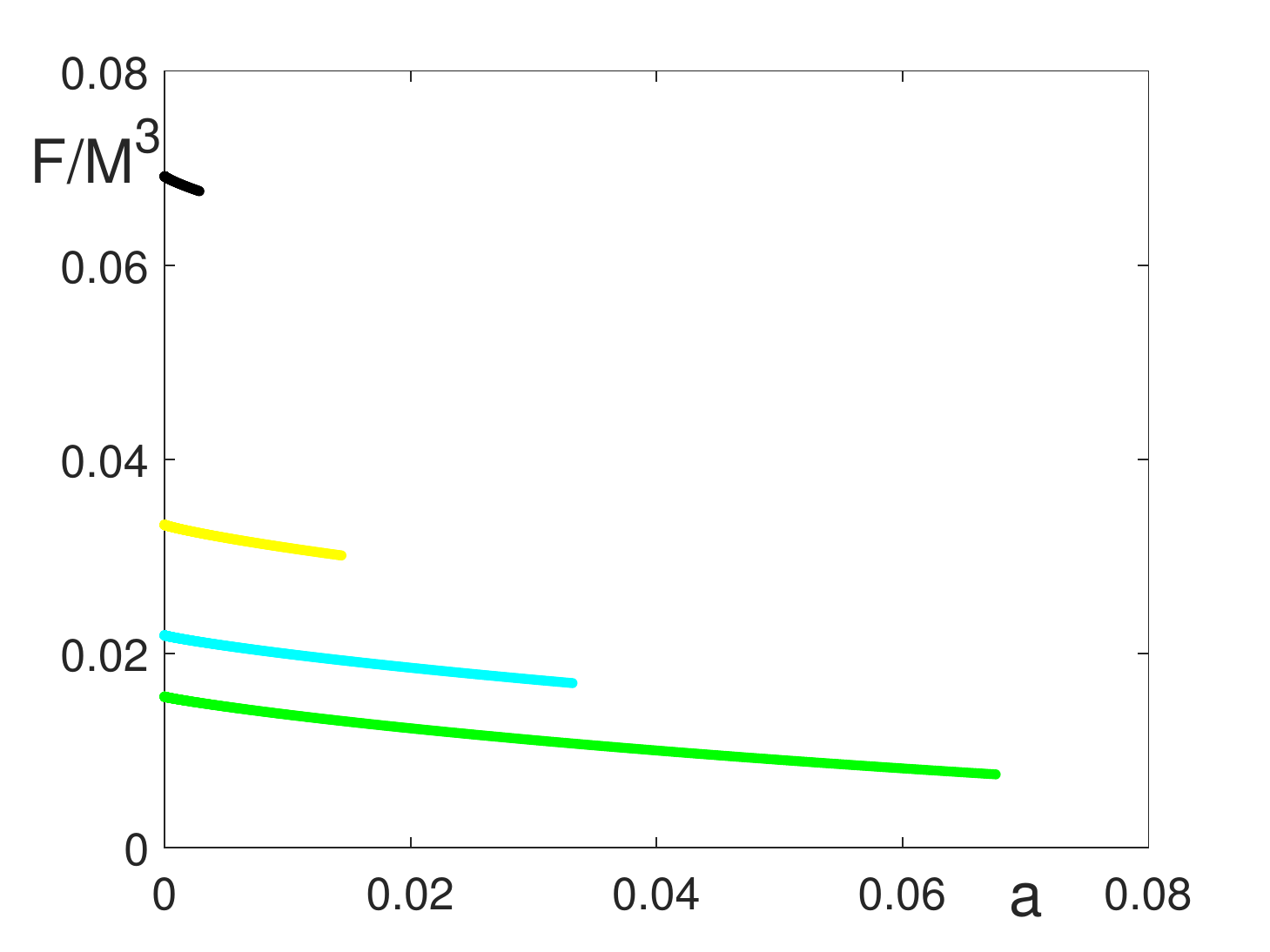}
	\includegraphics[width=7cm,height=6cm]{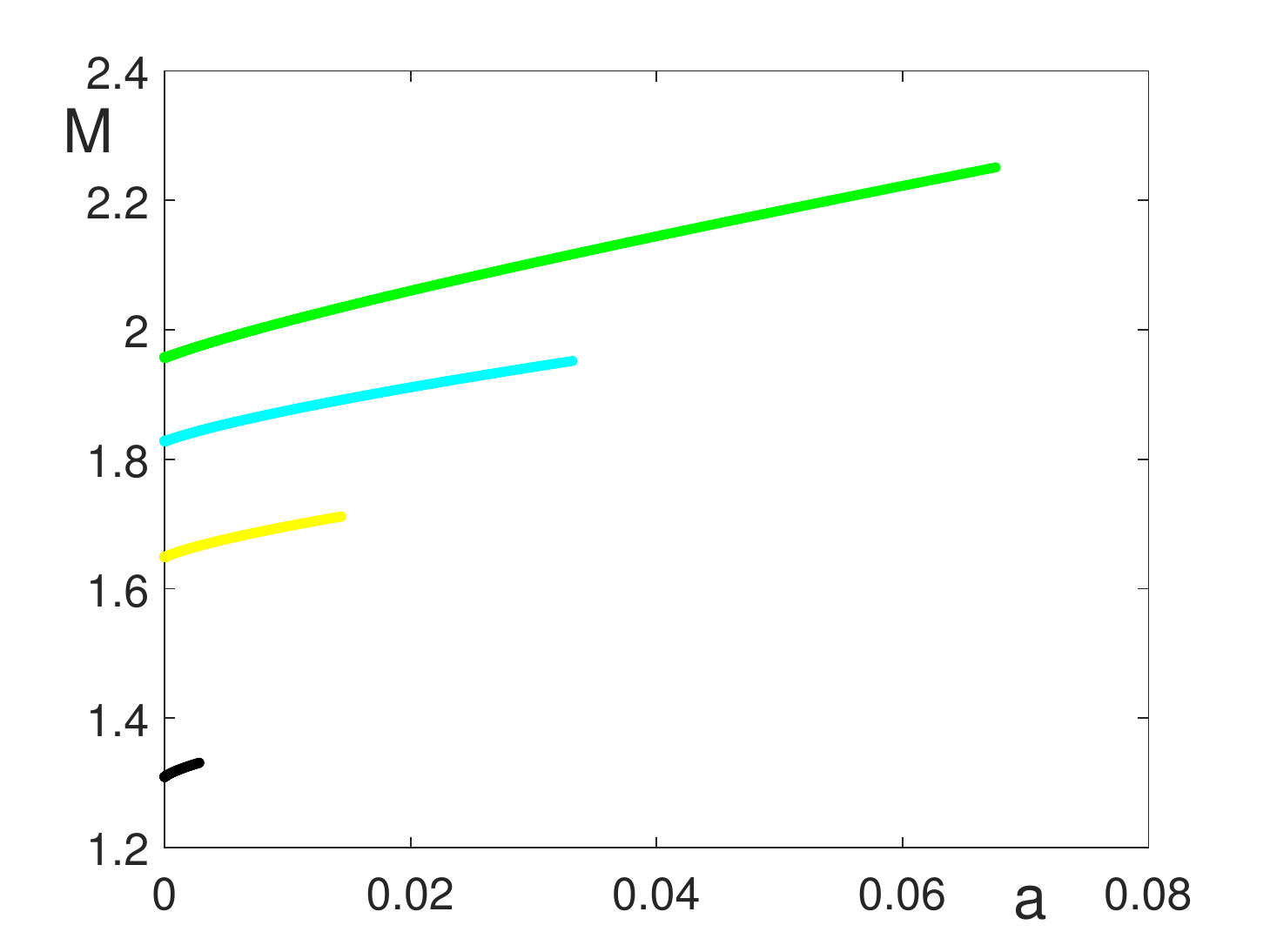}
	\caption{Top: Existence region on the $(a,b)$ plane with four curves 
	$b = \mathcal{B}_L(a)$ for $c = 1$ and four values of period: $L = \pi/2$ (black), $L = 3\pi/4$ (yellow), $L = \pi$ (cyan), and $L = 3 \pi/2$ (green).
Bottom: $\mathcal{F}_L/\mathcal{M}_L^3$ versus $a$ (left) and  $\mathcal{M}_L$ versus $a$ (right) along the four curves.} 
	\label{fig-dependence}
\end{figure}

Figure \ref{fig-dependence} shows the numerically computed mappings $a \mapsto \mathcal{F}_L(a)/\mathcal{M}_L^3(a)$ and $a \mapsto \mathcal{M}_L(a)$
for four values of fixed $L$. The parameter $a$ is chosen in $(0,a_L)$, 
where $a_L$ depends on $L$. It follows that the stability criterion of Theorem \ref{theorem-stability}  is satisfied for all cases. This property has been analytically proven only near the boundary $a = a_-(b)$ by means of  
the Stokes expansion, see Lemma \ref{lem-Stokes}.

It is harder to check the stability criterion of Theorem \ref{theorem-stability} near the other two boundaries of the existence region of Theorem \ref{theorem-existence} where the waves are either peaked or solitary. 
The perturbation theory becomes singular in these two 
asymptotic limits because $c - \phi$ vanishes for the peaked periodic waves 
and the period function diverges for the  solitary waves. 
Nevertheless, some relevant results are available in these two  limits:  

\begin{itemize}
	\item For the boundary $a = 0$ and $b \in (-c^2,0)$, where the periodic solutions are peaked, the spectral stability problem for the DP equation (\ref{DP}) needs to be set up by using a weak formulation of the evolution problem. This setup was elaborated for a generalized CH equation in \cite{LP-21}, building on previous work in \cite{Natali}, to show spectral instability of peaked solitary waves. Linear and nonlinear instability of peaked periodic waves with respect to peaked periodic perturbations was shown for the CH equation in \cite{MP-2021}. Spectral and linear instability of peaked periodic waves for the reduced Ostrovsky equation was proven in \cite{Geyer-Ostrovsky,Geyer-Pel2019}. Instability of peaked periodic waves in the DP equation or in the generalized CH equation is still open for further studies. \\
	
	\item For the boundary $a = a_+(b)$ and $b \in (0,\frac{1}{8}c^2)$, where  the periodic solutions have infinite period, spectral stability of solitary waves over a nonzero background was shown for the general $b$-family in \cite{LP-22} and for the DP equation in \cite{Liu-21}. The methods 
	in \cite{LP-22,Liu-21} are not related to the energy stability criterion (\ref{stability-criterion}), and it remains open to show the equivalence of the three different stability criteria for  smooth solitary waves over a nonzero background. 
\end{itemize}

The analytical proof of the energy stability criterion of Theorem \ref{theorem-stability} in the interior of the bounded existence region is still open. 
Another interesting open question is to explore the non-standard 
Hamiltonian formulation of the DP equation as a member of the 
$b$-family  and to obtain a different energy stability criterion 
for the smooth periodic waves. Finally, there may exist 
a deep connection between the energy stability criterion 
and the physical laws for fluids since the mapping (\ref{stability-criterion}) 
involves a homogeneous function of degree zero in terms of the wave 
profile $\phi$. Similarly, the energy stability criterion 
for the CH equation obtained in \cite{GMNP} involves a homogeneous 
function of degree zero given by $\mathcal{E}_L(a)/\mathcal{M}_L(a)^2$, 
where $\mathcal{M}_L(a)$ is the same as in \eqref{Mu} and $\mathcal{E}_L(a)$ is obtained from  $E(u)= \| u \|^2_{H^1_{\rm per}}$, which is different from (\ref{Eu}).\\

The paper is organized as follows. In section \ref{sec-2} we state and prove the existence result for the smooth periodic wave with profile $\phi$,  similar to \cite{GMNP} and \cite{LP-22}. Section \ref{sec-4} details the monotonicity properties of the period function for the smooth periodic solutions of DP with respect to both parameters $a$ and $b$. The proofs rely on the classical works \cite{Chic,Vil2014} but involve more complicated details of computations compared to \cite{GMNP, Geyer2015b} for the CH equation. Section \ref{sec-3} describes the number of negative eigenvalues and the multiplicity of the zero eigenvalue of the Hessian operator $\mathcal{L}$. The count is obtained by a nontrivial adaptation of the Birman--Schwinger principle which is different from the study of a similar Hessian operator for solitary waves in \cite{Liu-21}. The proof of Theorem \ref{theorem-existence} is achieved with the results obtained in Sections \ref{sec-2}, \ref{sec-4}, and \ref{sec-3}. Finally, in Section \ref{sec-5} we extend the family of periodic waves with the profile $\phi$ along a curve with a fixed period $L > 0$ and give the proof of Theorem \ref{theorem-stability}. \\

{\bf Acknowledgement.} This project was completed in June 2022 during a Research in Teams stay at the Erwin Schr\"{o}dinger Institute, Vienna. The authors thank Yue Liu for many discussions related to this project. D. E. Pelinovsky acknowledges the funding of this study provided by Grants No. FSWE-2020-0007 and No. NSH-70.2022.1.5.

\section{Smooth traveling waves} 
\label{sec-2}

Traveling waves of the form $u(t,x) = \phi(x-ct)$ with speed $c$ and profile $\phi$ are found from the third-order differential equation
\begin{equation}
\label{third-order}
-(c-\phi) (\phi''' - \phi') - 3 \phi \phi' + 3 \phi' \phi'' = 0,
\end{equation}
which is obtained from the DP equation (\ref{DP}). For notational convenience 
we denote $\phi = \phi(x)$ where $x$ stands for the traveling coordinate 
$x - ct$. Integration of (\ref{third-order}) in $x$ gives the second-order equation
\begin{equation}
\label{CHode}
-(c-\phi)\phi''+c\phi + \phi'^2 -2\phi^2 = b,
\end{equation}
where $b$ is an integration constant. Another second-order equation can be obtained after multiplying (\ref{third-order}) 
by $(c-\phi)^2$ and integrating,
\begin{equation}
\label{second-order}
-(c - \phi)^3 (\phi'' - \phi) = a,
\end{equation}
where $a$ is another integration constant. Both second-order equations (\ref{CHode}) and (\ref{second-order}) are compatible if and only if 
$\phi$ satisfies the first-order invariant (\ref{quadra}), 
which can  be viewed as the first-order invariant for 
either (\ref{CHode}) or (\ref{second-order}). 

The following lemma characterizes the family of periodic waves 
by using  phase plane analysis, and constitutes the existence 
part of Theorem \ref{theorem-existence}. 

\begin{lemma}
	\label{lem-trav}
	For a fixed $c > 0$, smooth periodic solutions to the first-order invariant (\ref{quadra}) with the profile $\phi \in H^{\infty}_{\rm per}$ satisfying $c - \phi > 0$ exist in an open, simply connected region on the $(a,b)$ plane enclosed by three boundaries: 
\begin{itemize}
	\item $a = 0$ and $b \in (-c^2,0)$, where the periodic solutions are peaked, 
	\item $a = a_+(b)$ and $b \in (0,\frac{1}{8} c^2)$, where the solutions have infinite period,
	\item $a = a_-(b)$ and $b \in (-c^2,\frac{1}{8} c^2)$, where the solutions are constant, 
\end{itemize}
where $a_+(b)$ and $a_-(b)$ are smooth functions of $b$. The family of periodic solutions inside this region is smooth in $(a,b)$. 
\end{lemma}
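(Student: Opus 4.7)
The plan is to reduce (\ref{quadra}) to a first-order planar ODE and carry out a phase-plane analysis, identifying the three boundary curves of the existence region as the loci where a nontrivial periodic orbit degenerates (to the center, to a homoclinic loop, or to a peaked orbit reaching the singularity at $\phi = c$).

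Because $c-\phi > 0$, I would rewrite (\ref{quadra}) as $\phi'^2 = W(\phi;a,b)$ with
\begin{equation*}
W(\phi;a,b) := \phi^2 + b - \frac{a}{(c-\phi)^2}, \qquad \phi < c.
\end{equation*}
Differentiation yields $\phi'' = \tfrac12 W'(\phi)$, so the equilibria of the associated planar system $(\phi,\phi')$ are the common zeros of $W$ and $W'$. Eliminating $a$ from $W = W' = 0$ reduces these to $2\phi^2 - c\phi + b = 0$, giving
\begin{equation*}
\phi_\pm^*(b) = \frac{c \pm \sqrt{c^2-8b}}{4},
\end{equation*}
real for $b \le c^2/8$ and satisfying $\phi_+^* < c$ iff $b > -c^2$. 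A short computation shows $W''(\phi_\pm^*) = 2(c-4\phi_\pm^*)/(c-\phi_\pm^*)$, so $\phi_-^*$ is a saddle and $\phi_+^*$ a center of the planar system.

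Next, for fixed $(a,b)$ I would analyze the critical points of $W(\cdot;a,b)$ in $\phi$. They solve $\phi(c-\phi)^3 = a$, and the auxiliary cubic $g(\phi) := \phi(c-\phi)^3$ is strictly increasing on $(-\infty,c/4)$ up to its maximum $27c^4/256$ at $\phi = c/4$ and strictly decreasing on $(c/4,c)$ down to $0$. Hence for each $a \in (0, 27c^4/256)$ there are exactly two critical points $\phi_1(a) \in (0,c/4)$ (local minimum of $W$) and $\phi_2(a) \in (c/4,c)$ (local maximum of $W$). Substituting $a = \phi_j(c-\phi_j)^3$ yields the identity $W(\phi_j;a,b) = 2\phi_j^2 - c\phi_j + b$, so a nondegenerate periodic orbit encircling the center (bounded by two simple turning points with $W > 0$ between them) exists iff
\begin{equation*}
W(\phi_1;a,b) < 0 < W(\phi_2;a,b).
\end{equation*}
Since the quadratic $2\phi^2 - c\phi + b$ is negative precisely between $\phi_-^*(b)$ and $\phi_+^*(b)$, these translate into $\phi_-^*(b) < \phi_1(a)$ and $\phi_2(a) < \phi_+^*(b)$, which by the monotonicity of $g$ on each half-interval become $a_+(b) < a < a_-(b)$ with $a_+(b) := g(\phi_-^*(b))$ and $a_-(b) := g(\phi_+^*(b))$.

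The three boundary curves then correspond to three natural degenerations. On $a = a_-(b)$ the outer critical point coincides with the center and the orbit collapses to a constant. On $a = a_+(b)$ with $b > 0$ the inner critical point coincides with the saddle, producing a homoclinic loop of infinite period. For $b \le 0$ one has $\phi_-^*(b) \le 0 < \phi_1(a)$ automatically, so the inner condition is free and the effective lower bound becomes $a = 0$; geometrically this is where the outer turning point reaches the singularity $\phi = c$ and the wave develops a peak. Matching at the corners $(0,-c^2)$, $(0,0)$ and $(27c^4/256,\,c^2/8)$ yields the open, simply connected region of Figure \ref{fig-domain}. Smoothness of $a_\pm$ on their respective intervals is inherited from that of $\phi_\pm^*(b)$ and $g$, while smoothness of $(a,b) \mapsto \phi(\cdot;a,b)$ in the interior follows from the implicit function theorem applied at the simple turning points together with smooth dependence of the quadrature on parameters; $\phi \in H^\infty_{\rm per}$ is then a consequence of standard ODE regularity. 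The main obstacle is not any single hard step but the bookkeeping: verifying that the inequality system really cuts out a connected, simply connected region; ruling out additional bounded orbits in the strip $\phi < c$ (here the limits $W \to +\infty$ as $\phi \to -\infty$ and $W \to -\infty$ as $\phi \to c^-$, together with the unique local extrema of $W$, leave only one family of periodic orbits); and checking that the peaked and solitary lower-boundary regimes glue continuously at $b = 0$ while the three boundary curves attain their claimed corner limits.
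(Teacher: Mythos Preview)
Your proposal is essentially the paper's own phase-plane argument, just with the potential flipped (you work with $W=b-U$ where the paper uses $U$) and the boundary curves parameterized through the auxiliary roots $\phi_\pm^*(b)$ of $2\phi^2-c\phi+b$ rather than directly by the center/saddle $\phi_1,\phi_2$; the resulting description of the region and its three boundaries matches the paper's exactly. One slip to fix: $W(\phi_2)>0$ means $\phi_2$ lies \emph{outside} the roots of the quadratic, so the correct inequality is $\phi_2(a)>\phi_+^*(b)$, not $\phi_2(a)<\phi_+^*(b)$; since $g$ is decreasing on $(c/4,c)$ this still yields your stated conclusion $a<a_-(b)$, so the error is a typo rather than a genuine gap.
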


\begin{proof}
	For a fixed $c > 0$, the first-order invariant (\ref{quadra}) 
	represents the energy conservation $(\phi')^2 + U(\phi) = b$ 
	for a Newtonian particle 
	with  mass $m = 2$ and the energy level $b$ under a force with 
	the potential energy
	\begin{equation*}
	U(\phi) := -\phi^2 + \frac{a}{(c-\phi)^{2}}.
	\end{equation*}
	The critical points of $U$ on $\mathbb{R}\backslash \{c\}$ are given by the roots of the algebraic equation 
	$$
	\phi (c-\phi)^3 = a.
	$$ 
	The global maximum of $\phi \mapsto \phi (c - \phi)^3$ occurs at $\phi = \phi_c := c/4$ for which $a = a_c := \frac{27}{256} c^4$. 
	
\begin{figure}[htb!]
	\includegraphics[width=0.48\textwidth]{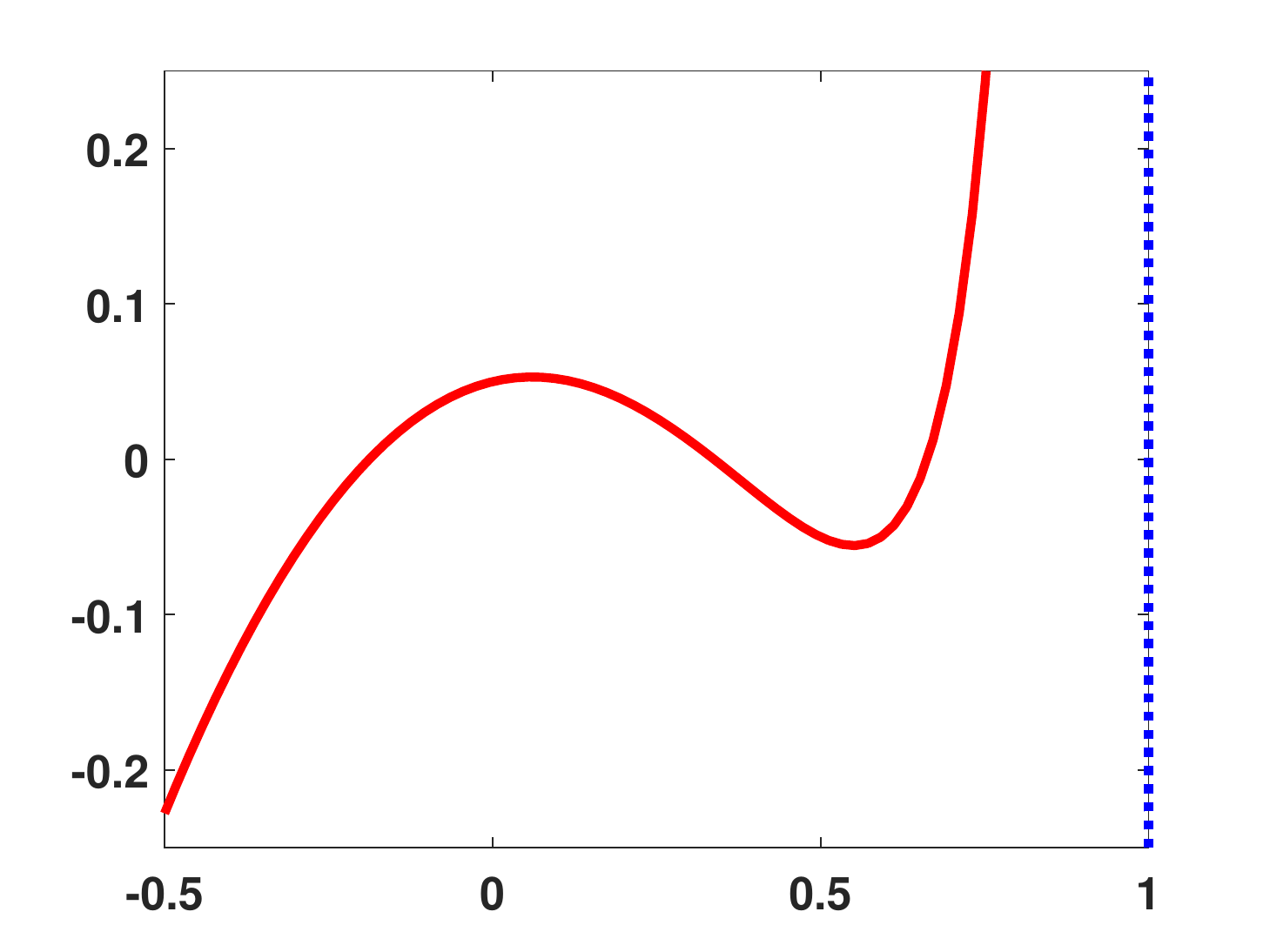}
	\includegraphics[width=0.48\textwidth]{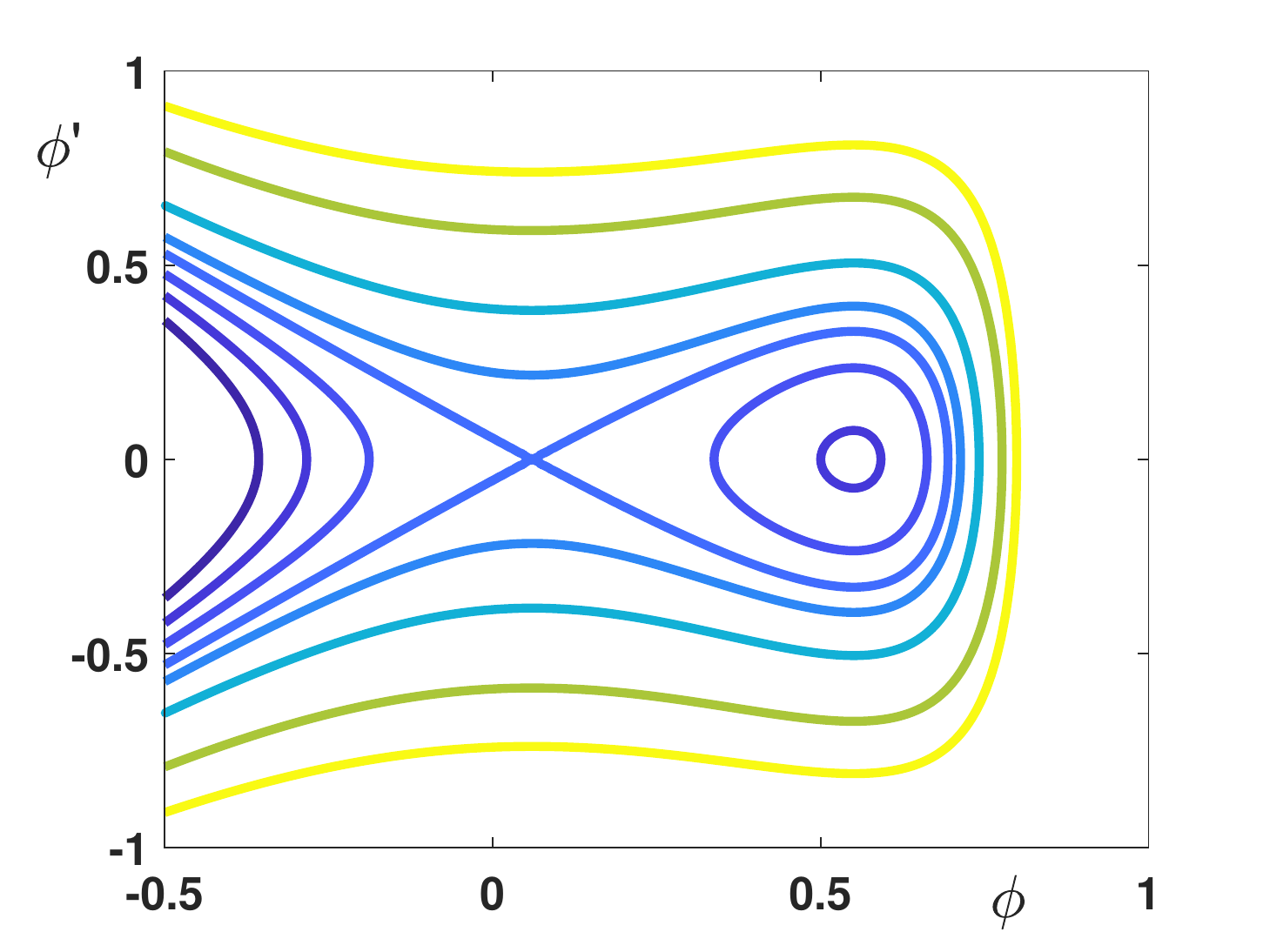}
	\caption{Left: $U$ versus $\phi$ for $c = 1$, and $a = 0.05$. Right: the phase portrait of the second-order equation (\ref{second-order}) constructed from the level curves of $b = (\phi')^2 + U(\phi)$ on the phase plane $(\phi,\phi')$ for the same parameter values.} \label{fig-plane}
\end{figure}
	
If $a \in (0,a_c)$, 
the potential energy $U$ has a local maximum $\phi_1$ and a local minimum $\phi_2$ which satisfy the ordering 
\begin{equation}
\label{ordering}
0 < \phi_1 < \frac{c}{4} < \phi_2 < c,
\end{equation}	
see the left panel of Figure \ref{fig-plane}. The local maximum and minimum  of $U$ give  the saddle point $(\phi_1,0)$ and the center point $(\phi_2,0)$ of the first-order planar system corresponding to the second-order equation (\ref{second-order}). 
Smooth periodic solutions with the profile $\phi$ satisfying $c-\phi > 0$ correspond to  periodic orbits inside a  punctured neighbourhood around the center $(\phi_2,0)$ enclosed by the homoclinic orbit connecting the saddle $(\phi_1,0)$, see the right panel of Figure \ref{fig-plane}. All other orbits are unbounded. 

If $a \in (-\infty,0)$, the potential energy $U$ has two local maxima, 
one is below the singularity at $c$ and the other one is above the singularity at $c$ with $U(\phi) \to -\infty$ as $\phi \to c$. All orbits are either unbounded or hit the singularity at $c$ for which $\phi'$ is infinite. 
The same is true for $a = 0$, for which $U(\phi) = -\phi^2$.

If $a \in [a_c,\infty)$, the potential energy does not have local extremal points and $U(\phi) \to +\infty$ as $\phi \to c$. All   orbits of the second-order equation (\ref{second-order}) are unbounded. 

Thus, bounded periodic solutions  exist if and only if $a \in (0,a_c)$. 
Note that $\phi$ depends smoothly on the parameters $a$ and $b$ in view of smooth dependence of the first-order invariant (\ref{quadra}) on $\phi$, $a$, and $b$  if $c - \phi > 0$.

It remains to characterize the three boundaries of the existence region, see Figure \ref{fig-domain}.
If $a = 0$, the second-order equation (\ref{second-order}) becomes $\phi'' - \phi = 0$ and is solved  explicitly by the $L$-periodic solution
\begin{equation*}
\phi(x) = c \frac{\cosh\left(\frac{L}{2}-|x|\right)}{\cosh\left(\frac{L}{2}\right)}, \quad x \in \left[-\frac{L}{2},\frac{L}{2}\right],
\end{equation*}
which attains the singularity $\phi = c$ placed at $x = 0$. As a result, 
the $L$-periodic wave is peaked at $x = 0$ and smooth at $x = \pm \frac{L}{2}$ with $\phi'\left(\pm \frac{L}{2}\right) = 0$. 
It follows from $b = (\phi')^2 - \phi^2$ that 
\begin{equation}
\label{dependence-b-L}
b = -c^2 {\rm sech}^2\left(\frac{L}{2}\right), 
\end{equation}
so that $b \in (-c^2,0)$ for $L \in (0,\infty)$. 

If $a \in (0,a_c)$, 
the periodic orbit exists for the energy level $b \in (b_-,b_+)$, where 
$b_- = U(\phi_2)$ and $b_+ = U(\phi_1)$. On each respective boundary, $a$ and $b$ can be parameterized by $\phi_2 \in (\phi_c,c)$ and  $\phi_1 \in (0,\phi_c)$, where $\phi_c = c/4$. The periodic solution along 
$b = b_-(a)$ is constant and we have 
\begin{equation}
\label{a-minus}
\left\{ \begin{array}{l} 
b = c \phi_2 - 2 \phi_2^2,\\
a = \phi_2 (c - \phi_2)^3,
\end{array} \right. \qquad \Rightarrow \qquad 
\left\{ \begin{array}{l} 
\frac{db}{d\phi_2} = c - 4 \phi_2 < 0,\\
\frac{da}{d\phi_2} = (c - \phi_2)^2 (c - 4 \phi_2) < 0.
\end{array} \right.
\end{equation}
Hence, in view of the chain rule, $b = b_-(a)$ is a monotonically increasing function, which can be inverted to obtain a function $a = a_-(b)$ for $b \in (-c^2,\frac{1}{8}c^2)$. Similarly, along $b = b_+(a)$, the periodic solution degenerates into a homoclinic solution of infinite period and we have 
\begin{equation*}
\left\{ \begin{array}{l} 
b = c \phi_1 - 2 \phi_1^2,\\
a = \phi_1 (c - \phi_1)^3,
\end{array} \right. \qquad \Rightarrow \qquad 
\left\{ \begin{array}{l} 
\frac{db}{d\phi_1} = c - 4 \phi_1 > 0,\\
\frac{da}{d\phi_1} = (c - \phi_1)^2 (c - 4 \phi_1) > 0.
\end{array} \right.
\end{equation*}
Hence $b = b_+(a)$ is a monotonically increasing function, which can be inverted to obtain a function $a = a_+(b)$ for $b \in (0,\frac{1}{8}c^2)$. 
\end{proof}

Next we show that the  periodic traveling wave with  profile $\phi$ 
is a critical point of the augmented energy functional $\Lambda_{c,b}$ defined in (\ref{action-1}).

\begin{lemma}
	\label{lem-var}
Let $\phi \in H^{\infty}_{\rm per}$ be an $L$-periodic solution of the first-order invariant (\ref{quadra}) for some $(a,b)$ inside the existence region specified in Lemma \ref{lem-trav} for fixed $c > 0$. Then, $\phi$ is a critical point of the augmented energy functional $\Lambda_{c,b}$.
\end{lemma}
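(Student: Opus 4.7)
The plan is to compute the first variation of $\Lambda_{c,b}$ and show that its vanishing at $u=\phi$ is equivalent to the second-order equation (\ref{CHode}), which $\phi$ already satisfies by Lemma \ref{lem-trav} (since (\ref{CHode}) arises from one integration of the traveling wave equation and is compatible with the first-order invariant (\ref{quadra})).

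First I would compute the $L^2_{\rm per}$ gradients of the three functionals. Clearly $\frac{\delta M}{\delta u} = 1$ and $\frac{\delta F}{\delta u} = \frac{1}{2} u^2$. For the quadratic form $E(u)$, the Fourier multiplier $(1-\partial_x^2)(4-\partial_x^2)^{-1}$ has real symbol $(1+k^2)/(4+k^2)$ on $\mathbb{T}_L$, hence it is bounded and self-adjoint on $L^2_{\rm per}$, so
\[
\frac{\delta E}{\delta u} = (1-\partial_x^2)(4-\partial_x^2)^{-1} u.
\]
The Euler--Lagrange equation $\delta \Lambda_{c,b}/\delta u \big|_{u=\phi} = 0$ therefore reads
\[
c\,(1-\partial_x^2)(4-\partial_x^2)^{-1} \phi - \tfrac{1}{2}\phi^2 - \tfrac{b}{4} = 0.
\]

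The key step is to remove the nonlocal operator by applying the invertible Fourier multiplier $(4-\partial_x^2)$, which has trivial kernel on $L^2_{\rm per}$ (its symbol $4+k^2$ is bounded below by $4$). This yields the pointwise identity
\[
c\,(1-\partial_x^2)\phi - \tfrac{1}{2}(4-\partial_x^2)\phi^2 - b = 0.
\]
Expanding $(\phi^2)'' = 2(\phi')^2 + 2\phi\phi''$, this rearranges as
\[
-(c-\phi)\phi'' + c\phi + (\phi')^2 - 2\phi^2 = b,
\]
which is precisely (\ref{CHode}). Since $\phi \in H^\infty_{\rm per}$ satisfies (\ref{CHode}) by Lemma \ref{lem-trav}, reading the chain of equivalences in reverse (and using that $(4-\partial_x^2)^{-1}$ maps smoothly back into $H^\infty_{\rm per}$) shows that $\delta \Lambda_{c,b}/\delta u \big|_{u=\phi}$ vanishes identically.

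I do not expect any serious obstacle here: the argument is essentially a direct calculation, and the only point requiring care is to work on the right side of the identity with the operator $(4-\partial_x^2)$ so that the nonlocal terms disappear. Smoothness of $\phi$ justifies all differentiations, and periodicity ensures that boundary terms from integration by parts in the variational computation vanish.
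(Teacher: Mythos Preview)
Your proof is correct and in fact more streamlined than the paper's. You compute the Euler--Lagrange equation, then apply the invertible local operator $(4-\partial_x^2)$ to reduce it directly to the second-order equation (\ref{CHode}), which $\phi$ satisfies as a consequence of (\ref{quadra}). The paper takes a less direct route: it introduces the auxiliary variable $\nu = (4-\partial_x^2)^{-1}\phi$, derives from the third-order traveling wave equation the relation $c(\nu-\nu'') - \tfrac{1}{2}\phi^2 = d$, obtains the explicit formula $\nu = \tfrac{1}{3}\phi - \tfrac{1}{6c}\phi^2 - \tfrac{d}{3c}$, and then identifies the integration constant as $d = b/4$, which matches the Euler--Lagrange equation (\ref{EL}). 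Your argument avoids this detour entirely. The trade-off is that the paper's proof produces as by-products the equation (\ref{nu-phi-equation}) and the formula (\ref{v-phi}), both of which are used later in the spectral analysis (Lemma \ref{lem-Floquet}) and in the derivation of (\ref{der-phi}); your approach, while cleaner for the lemma itself, does not furnish these.
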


\begin{proof}
	Let us introduce the momentum variable $m := u - u_{xx}$ and 
	the auxillary variable $v := (4-\partial_x^2)^{-1} u$ associated with the velocity variable $u$. For the traveling wave $u(t,x) = \phi(x-ct)$, we write $m(t,x) = \mu(x-ct)$ and $v(t,x) = \nu(x-ct)$ so that 
	$$
	\mu := \phi - \phi'', \quad \nu := (4 - \partial_x^2)^{-1} \phi.
	$$
	It follows from the second-order equation (\ref{second-order}) that $\mu = a (c-\phi)^{-3}$, whereas the third-order equation (\ref{third-order}) is equivalent to 
	$$
	c (\phi' - \phi''') - (4-\partial_x^2) \phi \phi' = 0,
	$$
which by virtue of $4 \nu - \nu'' = \phi$  gives the  relation 
	\begin{equation*}
	c(\nu' - \nu''') - \phi \phi' = 0.
	\end{equation*}
Integration yields
	\begin{equation}
	\label{nu-phi-equation}
	c(\nu - \nu'') - \frac{1}{2} \phi^2 = d,
	\end{equation}
	where $d$ is an integration constant. Since
	$4 \nu - \nu'' = \phi$, we obtain from (\ref{nu-phi-equation}) that 
	\begin{equation}
	\label{v-phi}
	\nu = \frac{1}{3} \phi - \frac{1}{6c} \phi^2 - \frac{d}{3c}.
	\end{equation}
	Substituting (\ref{v-phi}) into $\phi = 4 \nu - \nu''$ 
	and expressing $\phi''$ and $(\phi')^2$ by using (\ref{second-order}) and 
	(\ref{quadra}) gives us the relation 
	$d = b/4$ between the integration constants.
The Euler--Lagrange equation for $\Lambda_{c,b}$ is given by 
\begin{equation}
\label{EL}
c (1-\partial_x^2) (4 - \partial_x^2)^{-1} \phi - \frac{1}{2} \phi^2 - \frac{b}{4} = 0,
\end{equation}
which therefore coincides with (\ref{nu-phi-equation}). By Lemma \ref{lem-trav}, the periodic solutions of the first-order invariant (\ref{quadra}) are smooth, so that they are also smooth solutions of the Euler--Lagrange equation (\ref{EL}) and hence the critical points of 
$\Lambda_{c,b}$.
\end{proof}

\begin{remark}
The statement of Lemma \ref{lem-var} does not work in the opposite direction, 
since critical points of $\Lambda_{c,b}$ are solutions of the Euler--Lagrange equation (\ref{EL}) which are only defined in the weak space $L^{\infty}(\mathbb{T}_L)$. In particular, the set of critical points of $\Lambda_{c,b}$ includes the peaked periodic waves which occur at the boundary $a = 0$ of the existence region for smooth periodic waves in Lemma \ref{lem-trav}.
\end{remark}

\begin{remark}
In the variables $m := u - u_{xx}$ and $v := (4-\partial_x^2)^{-1} u$, 
the DP equation (\ref{DP}) can be rewritten in the local forms
\begin{equation*}
m_t + u m_x + 3m u_x = 0
\end{equation*}
and
\begin{equation*}
v_t - v_{txx} + u u_x = 0.
\end{equation*}
Traveling wave reductions of these equations give relations between $\phi$, 
$\mu$, and $\nu$, which appear in the proof of Lemma \ref{lem-var}.
\end{remark}

\begin{remark}
	The Hessian operator $\mathcal{L}= \Lambda_{c,b}''(\phi)$ 
	given by (\ref{hill}) is not related to the linearization of the second-order equations (\ref{CHode}) and (\ref{second-order}). It is related to the linearization of the second-order equation (\ref{nu-phi-equation}) in the sense that the derivative of (\ref{nu-phi-equation}) in $x$ yields 
	$$
c (1-\partial_x^2) \nu' - \phi \phi' = 0
	$$
	which implies $\mathcal{L} \phi' = 0$ in view of the fact that $c(1 - \partial_x^2) (4 - \partial_x^2)^{-1} = c - 3c (4 - \partial_x^2)^{-1}$.
\end{remark}

\section{Period function}
\label{sec-4}

Here we shall study monotonicity properties of the period function for the smooth periodic solutions of Lemma \ref{lem-trav}  with respect to parameters $a$ and $b$ for fixed $c > 0$. For $a \in (0,a_c)$, where $a_c := \frac{27}{256} c^4$, we let $\phi_+$ and $\phi_-$ be the turning points for which $U(\phi_{\pm}) = b$ for each $b \in (b_-,b_+)$. It follows from the proof of Lemma \ref{lem-trav}, see Figure \ref{fig-plane}, that the turning points fit into the ordering (\ref{ordering}) as follows:
\begin{equation*}
0 < \phi_1 < \phi_- < \frac{c}{4} < \phi_2 < \phi_+ < c.
\end{equation*}
The period function $\mathfrak{L}(a,b)$ assigns to each smooth periodic solution of the first-order invariant (\ref{quadra}) its  fundamental period $L=\mathfrak{L}(a,b)$. Rewriting (\ref{quadra}) in the form  
$$
(\phi')^2 + U(\phi) = b
$$ 
and integrating it along the periodic orbit $\phi$, it follows  that the period function is given by 
\begin{equation}
\label{period-L-function}
\mathfrak{L}(a,b) := 2 \int_{\phi_-}^{\phi_+} \frac{d\phi}{\sqrt{b - U(\phi)}}
\end{equation}
for every point $(a,b)$ inside the existence region of Lemma \ref{lem-trav}.

\subsection{Monotonicity of the period function with respect to the parameter $b$}

We shall prove that the period function $\mathfrak{L}(a,b)$ is a strictly increasing function of $b$ for fixed $c > 0$ and $a \in (0,a_c)$. This gives the second result of Theorem \ref{theorem-existence}.

\begin{lemma}
	\label{theorem-increasing}
	Fix $c > 0$ and $a \in (0,a_c)$
	The period function $\mathfrak{L}(a,b)$ is strictly increasing as a function of $b$.
\end{lemma}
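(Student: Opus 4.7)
The plan is to apply a Chicone-type monotonicity criterion to the period function of the Hamiltonian system $(\phi')^2 + U(\phi) = b$, where $U(\phi) = -\phi^2 + a(c-\phi)^{-2}$. The decisive observation is that the center $\phi_2 \in (c/4,c)$ of the period annulus, identified in Lemma \ref{lem-trav} as the local minimum of $U$, depends on $a$ but \emph{not} on $b$, so that $h := b - U(\phi_2)$ varies linearly in $b$ while all other ingredients of the period integral are $b$-independent.

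First, I would introduce the Chicone substitution
\[
s := \mathrm{sgn}(\phi - \phi_2)\sqrt{U(\phi) - U(\phi_2)},
\]
which, since $U''(\phi_2) > 0$, is a smooth strictly increasing bijection from $[\phi_-,\phi_+]$ onto $[-\sqrt{h},\sqrt{h}]$. Writing $g(s) := d\phi/ds > 0$ (smoothly extended at $s=0$ by $g(0)=\sqrt{2/U''(\phi_2)}$) and substituting $s=\sqrt{h}\sin\theta$, the period formula (\ref{period-L-function}) takes the symmetric form
\[
\mathfrak{L}(a,b) \;=\; 2\int_{-\sqrt{h}}^{\sqrt{h}} \frac{g(s)\,ds}{\sqrt{h - s^2}} \;=\; 2\int_{-\pi/2}^{\pi/2} g(\sqrt{h}\sin\theta)\,d\theta.
\]
Since $\partial_b h = 1$, differentiation under the integral sign and the change $\theta \mapsto -\theta$ on the negative half give
\[
\frac{\partial \mathfrak{L}}{\partial b} \;=\; \frac{1}{\sqrt{h}}\int_0^{\pi/2}\bigl[g'(\sqrt{h}\sin\theta) - g'(-\sqrt{h}\sin\theta)\bigr]\sin\theta\,d\theta,
\]
so the sought monotonicity $\partial_b \mathfrak{L} > 0$ will follow from strict convexity of the Jacobian $g$ on $(-\sqrt{h},\sqrt{h})$, because $g'$ increasing gives $g'(x) > g'(-x)$ for every $x > 0$.

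Second, I would verify this convexity by explicit computation. Starting from $U'(\phi) g(s) = 2s$, two successive implicit differentiations express $g''(s)$ as a rational function of $s$, $U'(\phi)$, $U''(\phi)$, and $U'''(\phi)$. Substituting
\[
U' = -2\phi + \frac{2a}{(c-\phi)^3}, \qquad U'' = -2 + \frac{6a}{(c-\phi)^4}, \qquad U''' = \frac{24a}{(c-\phi)^5},
\]
and passing to the variable $\psi := c - \phi > 0$ to clear denominators, the sign of $g''(s)$ reduces to a polynomial inequality in $\psi$, $c$, and $a$. The value $g''(0)$ can be read off directly from $U''(\phi_2)$, $U'''(\phi_2)$, $U^{(4)}(\phi_2)$, all of which are positive for $\phi_2 > c/4$, so convexity is confirmed near the center.

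The main obstacle is the \emph{global} verification of $g''(s) \geq 0$ on all of $[\phi_-, \phi_+]$ and uniformly over $a \in (0,a_c)$, $b \in (b_-(a), b_+(a))$, since the turning points are only implicitly defined by $U(\phi_\pm) = b$ and $U$ is strongly asymmetric about $\phi_2$; the two sides $\phi < \phi_2$ and $\phi > \phi_2$ will almost certainly require separate treatment, with the branch $\phi \to c^-$ the more delicate one because of the singular growth of $U$ there. If Chicone's pointwise convexity criterion fails on a subregion, a safer fallback is to split $\mathfrak{L}$ into the two half-periods $\int_{\phi_-}^{\phi_2}$ and $\int_{\phi_2}^{\phi_+}$ and establish monotonicity of each in $b$ directly via an integration-by-parts identity in the spirit of \cite{Chic,Vil2014}, parameterizing the half-orbit by $\phi$ or by a transformed variable that regularizes the square-root singularity at the turning point.
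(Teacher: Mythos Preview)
Your plan is, at its core, the same as the paper's: Chicone's criterion. Indeed, with $s^2=V(\phi):=U(\phi)-U(\phi_2)$ one has $g(s)=2s/V'(\phi)$, hence $V/(V')^2=g^2/4$, and a direct chain-rule computation gives
\[
\frac{d^2}{d\phi^2}\!\left(\frac{V}{(V')^2}\right)=\frac{g''(s)}{2\,g(s)}.
\]
So your convexity condition $g''>0$ is \emph{exactly} the convexity of $W=V/(V')^2$ that the paper invokes via \cite{Chic}. There is no alternative route here; you have only rephrased the same sufficient condition.

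The genuine gap is that you stop precisely where the work begins. You correctly flag the ``main obstacle'' --- the global sign of $g''$ (equivalently $W''$) on the full interval $(\phi_-,\phi_+)$, uniformly in the parameter $\eta=(c-\phi_2)/\phi_2\in(0,3)$ --- but you do not resolve it. In the paper this is the entire content of the proof: after the change of variables the sign of $W''$ reduces to showing that an explicit degree-six polynomial $R_\eta(x)$ is negative on the projection of the period annulus, and this is carried out by a careful computer-algebra argument (discriminants in $x$, resultants against $V'$ and against the implicit boundary polynomial $P_\eta$, Sturm's method to localise real roots, and a separate treatment of an exceptional parameter value $\eta_1$). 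None of this is avoidable; the potential is genuinely asymmetric and there is no soft reason for the inequality. Your proposed fallback --- splitting into half-periods and appealing to integration-by-parts identities ``in the spirit of \cite{Chic,Vil2014}'' --- is not a proof and would in any case lead back to a comparable polynomial verification.

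A smaller point: the inference ``$U''(\phi_2),U'''(\phi_2),U^{(4)}(\phi_2)>0$, so $g''(0)>0$'' is not valid as stated. The Taylor coefficient $g''(0)$ (equivalently $W''(\phi_2)$) is a specific rational combination of $V_2,V_3,V_4$, not a sum of positive terms; its positivity here is true, but it is a consequence of the global polynomial inequality, not of the sign of individual derivatives of $U$.
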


\begin{proof}
Recall that $\phi_2$ is the local minimum of $U$ and hence the second root of the algebraic equation $a = \phi_2 (c-\phi_2)^3$, see the ordering (\ref{ordering}), which we may use to replace the parameter $a$. Then, using the transformation $\{x=\frac{\phi-\phi_2}{\phi_2}, \;\; y=\frac{\phi'}{\phi_2}\}$, we can write the second-order equation (\ref{second-order}) as the planar system 
	\begin{equation}\label{sys}
	\left\{
	\begin{array}{l}
	x' =y,\\[2pt]
	y' =1+x-\frac{\eta^3}{(\eta - x)^3},
	\end{array}\right.
	\end{equation}
	associated with the Hamiltonian 
	\begin{equation}
	\label{potential-H}
	H(x,y)=\frac{y^2}{2} + V(x), \quad V(x) := -\frac{x^2}{2} - x -\frac{\eta}{2}+ \frac{\eta^3}{2(\eta-x)^2},
	\end{equation}
	where $\eta = \frac{c-\phi_2}{\phi_2}\in(0,3)$.	The potential $V$ is smooth away from the singular line $x=\eta$, has a local minimum at $x=0$ and a local maximum at $x_1 < 0$, see Figure \ref{fig-potential-V}.
	
		\begin{figure}[htb!]
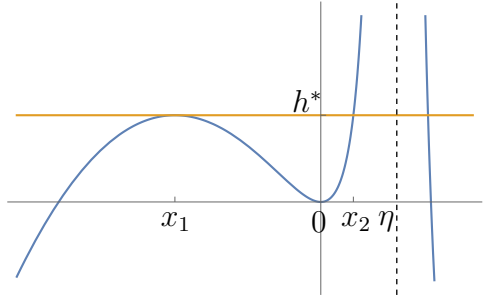

		\begin{lpic}{potential1(0.5,0.5)}
			\lbl{45,20;$x_1$}
			\lbl{83,20;$0$}
			\lbl{93,20;$x_2$}		
			\lbl{101,20;$\eta$}
			\lbl{80,52;$h^*$}
		\end{lpic}
		\caption{The potential function $V(x)$ plotted for $\eta =0.5$.} 
		\label{fig-potential-V}
	\end{figure}

	The center at the origin is surrounded by periodic orbits $\gamma_h$, which lie inside the level curves $H(x,y) = h$ with $h\in(0,h^*)$ and $h^*=V(x_1)$. Denote by $x_2$ the unique solution of $V(x_1)=V(x)$ such that $x_1<0<x_2<\eta$, see Figure \ref{fig-potential-V}. Finally, define the period function of the center $(0,0)$ of system (\ref{sys}) by 
	\begin{equation*}
	\ell(h)=\int_{\gamma_h} \frac{dx}{y} \quad \text{ for each } h\in(0,h^*).
	\end{equation*}
	Note that $b = 2 \phi_2^2 h + \phi_2 (c - 2 \phi_2)$ 
	and $\mathfrak{L}(a,b) = \ell(h)$ for fixed $a \in (0,a_c)$ and $c > 0$.
	Since $\phi_2$ is fixed, we have $\partial_b \mathfrak{L}(a,b) > 0$ if and only if $\ell'(h) > 0$.

	To prove that $\ell'(h) > 0$, we shall use a monotonicity criterion by Chicone \cite{Chic} for planar systems with Hamiltonians of the form (\ref{potential-H}), where $V$ is
	a smooth function on $(x_1,x_2)$ with a nondegenerate relative minimum at the origin. According to the main theorem in \cite{Chic} the period function $\ell(h)$ is monotonically increasing in $h$ if the function 
	$$
	W(x) := \frac{V(x)}{(V'(x))^2}
	$$
	is convex in $(x_1,x_2)$. Hence, we have to prove that $W''(x) > 0$ for every $x \in (x_1,x_2)$. A straightforward computation shows that 
	\begin{equation*}
	W''(x)=-\frac{3 (\eta - x)^2 R_\eta(x)}{(\eta^2 (3 - \eta) + 3 \eta (\eta - 1) x + (1-3\eta) x^2 + x^3)^4},
	\end{equation*}
	where
	\begin{eqnarray*}
		R_\eta(x) &=& (\eta-1) x^6 + 10 \eta (1-\eta) x^5 + 5 \eta^2 (7 \eta - 9) x^4 + 20 \eta^3 (5 - 3 \eta) x^3 \\
		&& + 5 \eta^3 (11 \eta^2 - 22 \eta -1) x^2 
		+ 2 \eta^4 (9 + 28 \eta - 13 \eta^2) x + 5 \eta^5 (\eta^2 - 2 \eta - 3).
	\end{eqnarray*}
	We need to show that $R_\eta(x) < 0$ for $x \in (x_1,x_2)$ and  $\eta \in (0,3)$.
	The case  $\eta=1$ has to be considered separately, for which we find that 
	$$
	R_1(x) = -10 (x-1)^4 - 2(5-4x) < 0 \text{ for  } x \in (-\infty,1), 
	$$
	and, in particular, $R_1(x)<0$ on $(x_1,x_2)$ since $x_1<0<x_2<\eta$. For $\eta \in I :=(0,3)\setminus\{1\}$ we will use a result  for univariate polynomials depending on a parameter, see \cite[Lemma 8.1]{GGG}, which can be used to ensure that the polynomial $R$ does not change sign on the interval $\Omega:=(x_1,x_2)$ when varying the parameter $\eta$.	
	In what follows, $\emph{\mbox{Res}}_x(f,g)$ stands for the \emph{multipolynomial resultant} of two polynomials $f$ and $g$ in $x$ (see for instance~\cite{Cox2007,Fulton1984}).
	
	We will check the assumptions of \cite[Lemma 8.1]{GGG} one by one. 
	Assumption (i) clearly holds for all $\eta \in I$. To prove assumption (ii) we compute the discriminant of $R_\eta$ with respect to $x$,
	\begin{equation*}
	\mbox{Disc}_x(R_\eta)= -6400000\eta^{23}(\eta + 1)^4(\eta - 1)(27\eta^2 + 14\eta + 3)(\eta^3 - 5\eta^2 + 11\eta + 1)
	\end{equation*}
	and see that it is different from zero  on $I$ since the term $\eta^3 - 5\eta^2 + 11\eta + 1$ has only a negative real root and all other terms do not have any real roots on $I$. To ensure assumption (iv) we need to show that $R_{\eta}$ does not vanish in the boundary points $x_1,x_2$ of $\Omega$. Since we do not have explicit expressions for these points  we will compare $R_{\eta}$ with other polynomials with explicitly known roots since $x_1$ is the nontrivial zero of $V'(x)$ and $x_2$ is the unique zero of $V(x_1)-V(x)$ in $(x_1,\eta)$. Since 
	\begin{equation*}
	\mbox{Res}_x(R_\eta,V')=5\eta^{15}(\eta + 1)^2(\eta - 3)^2(27\eta^2 + 14\eta + 3)^2 \neq 0,
	\end{equation*}
	for  $\eta \in I$, the polynomials $R_\eta$ and $V'$ do not have a common root, so in particular $R_\eta(x_1)\neq 0 $ for $\eta \in I$. For the other boundary point $x_2$, we define 
	\begin{equation*}
	P_\eta(x):=\mbox{Res}_y(V'(y), V(y)-V(x))	
	\end{equation*}
	and compute 
	\begin{equation*}
	\mbox{Res}_x(R_\eta,P_\eta) = 25\,{\eta}^{45} \left( \eta+1 \right) ^{7} \left( \eta-3 \right) ^{6}
	\left( 9\,{\eta}^{2}+21\,\eta+16 \right)  \left( 27\,{\eta}^{2}+14\,
	\eta+3 \right) ^{2} Q(\eta).
	\end{equation*}
where $Q$ is a polynomial of degree $34$ whose expression we omit. We use Sturm's method, see \cite[Theorem 5.6.2]{SB}, to prove that it has one root at $\eta=\eta_1\approx 1.083$, and find the rational lower and upper bounds 
	$$
	\eta_1 \in [\underbar{$\eta$}_1,\bar\eta_1]:=\left[ \frac{277}{256}, \frac{555}{512}\right]\subset I.
	$$ 
	This proves that $R_\eta(x_2)\neq 0$ for $\eta \in I\setminus\{\bar\eta\}$. Therefore, the number of zeros of $R_\eta(x)$ on $\Omega$ is constant for $\eta \in I\setminus\{\eta_1\}$. The value $\eta = \eta_1$ is treated separately at the end of the proof. Finally, to ensure assumption (iii) we have to show that there exists some $\eta$ in each of the  subintervals of $I\setminus\{\eta_1\}$ such that $R_\eta(x)\neq 0$. For $\eta=\frac{1}{2} \in (0,1)$, 
	\begin{equation*}
	R_{\frac{1}{2}}(x)= \frac{1}{128}\left(-64x^6 + 320x^5 - 880x^4 + 1120x^3 - 740x^2 + 316x - 75\right).
	\end{equation*}
	Using again Sturm's method we can show that $R_{\frac{1}{2}}$ has  two real roots $r_i$, $i=1,2$, for which we can find rational lower and upper bounds such that $r_i \in [\underbar{r}_i,\bar r_i]=:I_i$, $i=1,2$, for instance 
	$$
	r_1\in I_1= \left[ \frac{327}{512}, \frac{655}{1024}\right] \quad 
	\mbox{\rm and} \quad r_1 < r_2 \in I_2 = \left[ \frac{991}{1024}, \frac{31}{32}\right].
	$$ 
	To show that the two roots are outside of $\Omega$ we use Sturm's method once more for the polynomial $P_{\frac{1}{2}}(x)$ to find rational bounds for 
	$$
	x_2 \in [\underbar{$x$}_2, \bar x_2]:= \left[ \frac{94993}{131072}, \frac{23749}{32768}\right].
	$$ 
	Then it is straightforward to see that $V(\bar x_2) -V(\underbar{$r$}_1) <0$, which implies that $x_2<\bar x_2 < \underbar{$r$}_1<r_1<r_2$ since $V$ is monotone increasing for $x>0$. Hence $R_{\frac{1}{2}}(x)\neq 0$. Similarly, we show that $R_\eta\neq 0$ for $\eta=\underbar{$\eta$}_1\in (1,\eta_1)$ and $\eta=2\in(\bar\eta_1,3)$. Then, by \cite[Lemma 8.1]{GGG}, $R_\eta(x)\neq 0$ on  $\Omega$ for all $\eta\in I\setminus\{\eta_1\}$ and one can easily check that $R_\eta(x)<0$ on $\Omega$ in each of the subintervals of $ I\setminus\{\eta_1\}$. 
	
	To ensure that  also $R_{\eta_1}(x)<0$ we prove that $R_{\eta}$ is monotone in a neighborhood of $\eta_1$, i.e.~we show that $R'_\eta(x)\neq 0$ on $\Omega$ for $\eta \in (\underbar{$\eta$}_1,\bar \eta_1)$ using again \cite[Lemma 8.1]{GGG}. Indeed, similarly as above we show that $R_\eta(x_1)R_\eta(x_2)\mbox{Disc}_x(R'_\eta)\neq 0$ for $\eta \in (\underbar{$\eta$}_1,\bar \eta_1)$ and evaluating $R'_\eta(x)$ in one value, for instance $\eta =1083/1000\in (\underbar{$\eta_1$}	,\bar \eta_1)$, we find using Sturm's method that $R'_\eta(x)\neq 0$ on $\Omega$ for $\eta \in (\underbar{$\eta_1$},\bar \eta_1)$. 
	
	This concludes the proof that   $W''(x)>0$ for $x \in (x_1,x_2)$ and $\eta \in I$, which yields $\ell'(h) > 0$ by the main theorem in \cite{Chic}. 
\end{proof}

\subsection{Monotonicity of the period function with respect to the parameter $a$}

We shall study monotonicity properties of the period function $\mathfrak{L}(a,b)$ as a function of $a$ for fixed $c > 0$ and $b \in (-c^2,\frac{1}{8}c^2)$. This result will be used to prove the last assertion of  Theorem \ref{theorem-existence}, see Corollary \ref{lem-degenerate}.

\begin{lemma}
	\label{lem-nonmonotonicity}
	Fix $c > 0$ and $b \in (-c^2,\frac{1}{8} c^2)$. The period function 
	$\mathfrak{L}(a,b)$ satisfies the following properties:
	\begin{itemize}
		\item It is strictly monotonically increasing in $a$ if $b \in (-c^2,-\frac{2}{9}c^2]$;
		\item It has a unique critical point in $a$, which is a maximum, if $b \in (-\frac{2}{9}c^2,0)$; 
		\item It is strictly monotonically decreasing in $a$ if $b \in [0,\frac{1}{8} c^2)$.
	\end{itemize}
\end{lemma}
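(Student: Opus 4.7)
The natural approach is to compute $\partial_a \mathfrak{L}(a,b)$ at fixed $b$ and identify its sign in each of the three regimes. Starting from the action $S(a,b) := \int_{\phi_-}^{\phi_+} \sqrt{b-U(\phi)}\, d\phi$ one has $\mathfrak{L} = 4 S_b$, and since $b - U$ vanishes at the turning points $\phi_\pm(a,b)$ the boundary contributions in $S_a$ vanish, giving the clean identity
\begin{equation*}
\partial_a \mathfrak{L}(a,b) = -2\,\partial_b \int_{\phi_-}^{\phi_+} \frac{d\phi}{(c-\phi)^2 \sqrt{b-U(\phi)}}.
\end{equation*}
To safely differentiate in $b$ through the singular endpoints, I would first regularize via the standard substitution $\phi = \tfrac{1}{2}(\phi_+ + \phi_-) + \tfrac{1}{2}(\phi_+ - \phi_-)\sin\theta$, which turns the integrand into a smooth function of $\theta \in [-\pi/2, \pi/2]$.

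Next I would transport the analysis into the normalized $(h,\eta)$ coordinates of Lemma \ref{theorem-increasing} with $\eta = (c-\phi_2)/\phi_2$ and $h$ the normalized energy above the center, so that $a = \phi_2(c-\phi_2)^3$ and $b = c\phi_2 - 2\phi_2^2 + 2\phi_2^2 h$. Writing $\mathfrak{L}(a,b) = \ell(h,\eta)$, the chain rule at fixed $b$ expresses $\partial_a \mathfrak{L}$ as an explicit linear combination of $\ell_h$ and $\ell_\eta$ with rational coefficients in $(\phi_2, h)$. Since $\ell_h > 0$ by Lemma \ref{theorem-increasing}, the sign of $\partial_a \mathfrak{L}$ reduces to a single algebraic expression involving $\ell_\eta$ and these coefficients.

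The three cases in the statement then emerge from combining this sign analysis with the boundary behavior of $\mathfrak{L}$ on the admissible $a$-range for each fixed $b$. For $b \in [0, \tfrac{1}{8} c^2)$ the range is $a \in (a_+(b), a_-(b))$, with $\mathfrak{L}$ diverging at $a_+(b)$ (solitary-wave limit) and equal to the linearized period $T_0(b) = 2\pi\sqrt{(c-\phi_2)/(4\phi_2-c)}$ at $a_-(b)$, consistent with strict monotone decrease in $a$. For $b \in (-c^2, 0)$ the range is $a \in (0, a_-(b))$ and $\mathfrak{L}$ is finite at both ends: $L_p(b) = 2\operatorname{arccosh}(c/\sqrt{-b})$ at $a \to 0^+$ and $T_0(b)$ at $a \to a_-(b)^-$. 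The transition value $b = -\tfrac{2}{9} c^2$ should then correspond to the threshold at which a zero of $\partial_a \mathfrak{L}$ first enters the admissible range from one of the boundaries; note that the constant-wave boundary at $b = -\tfrac{2}{9}c^2$ sits at $\phi_2 = 2c/3$ and $\eta = \tfrac12$, a particularly simple point in the $(\phi_2, \eta)$ parametrization.

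The main obstacle will be twofold. First, the algebraic sign analysis of $\partial_a \mathfrak{L}$ in $(h, \eta)$ coordinates will most likely reduce to checking that an explicit polynomial (the numerator of $\partial_a \mathfrak{L}$ times a clearing factor) has a definite sign on a semialgebraic set, and I expect to re-use the discriminant, resultant, and Sturm-sequence machinery developed in Lemma \ref{theorem-increasing} for the same underlying Hamiltonian. Second, strict uniqueness of the maximum in the intermediate regime does not follow from a first-derivative inequality alone; the cleanest route is to show that at every zero of $\partial_a \mathfrak{L}$ one has $\partial_a^2 \mathfrak{L} < 0$, so every critical point of $\mathfrak{L}(\cdot, b)$ is a strict maximum and hence unique, with boundary analysis providing the required existence.
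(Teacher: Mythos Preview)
Your proposal is a plan of attack rather than a proof, and the central choice of coordinates creates a real difficulty that the paper avoids. In the $(h,\eta)$ coordinates of Lemma~\ref{theorem-increasing} the parameter $\eta=(c-\phi_2)/\phi_2$ depends on $a$ alone (through $a=\phi_2(c-\phi_2)^3$), so holding $b$ fixed and moving in $a$ changes \emph{both} $h$ and $\eta$. Your chain-rule expression for $\partial_a\mathfrak L$ therefore genuinely mixes $\ell_h$ and $\ell_\eta$, and Lemma~\ref{theorem-increasing} gives you no information about $\ell_\eta$. Controlling that term would be an entirely new analysis, not a re-use of the existing one. Your observation that $\eta=\tfrac12$ at $b=-\tfrac{2}{9}c^2$ holds only on the constant-wave boundary; in the interior there is no fixed $\eta$ corresponding to that threshold, so the three regimes will not separate cleanly in these coordinates.

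The paper sidesteps this by working instead with the second-order equation \eqref{CHode} and a different change of variables with parameter $\theta=\tfrac14\bigl(3c/\sqrt{c^2-8b}-1\bigr)$, which depends on $b$ only. At fixed $b$ the problem becomes genuinely one-dimensional: $\partial_a\mathfrak L$ has sign opposite to $\ell'(h)$, and the three regimes $b\in[0,\tfrac18c^2)$, $b\in(-\tfrac29c^2,0)$, $b\in(-c^2,-\tfrac29c^2]$ correspond to $\theta\ge\tfrac12$, $\theta\in(\tfrac15,\tfrac12)$, $\theta\in(0,\tfrac15]$. The monotonicity and the at-most-one-critical-point statements are then obtained from the Garijo--Villadelprat criteria \cite{Vil2014} (via resultant and Sturm computations on the $\sigma$-balances of the auxiliary functions $\ell_1,\ell_2$), together with the first period constants and the boundary asymptotics $\ell'(h)\to-\infty$ as $h\to h_m$. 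Your alternative route through $\partial_a^2\mathfrak L<0$ at each critical point is plausible but would require a second layer of sign control that you have not supplied; the paper's bound on the number of zeros of $\Bs(\ell_2)$ gives uniqueness directly.
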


\begin{remark}
The proof of Lemma \ref{lem-nonmonotonicity} follows very closely the one carried out in \cite{Geyer2015b} for the period function of the CH equation and relies strongly on the tools developed in \cite{Vil2014}. For the sake of brevity we refrain from stating the technical details and refer to \cite{Geyer2015b} for more explanations on how these tools are applied.
\end{remark}
 
In contrast to the previous subsection, where periodic smooth traveling waves are characterized as solutions of the second-order equation  \eqref{second-order}, we now regard the traveling waves as solutions of the equivalent second-order equation \eqref{CHode}. For convenience, we rewrite \eqref{CHode} as a planar system such that its center is located at the origin. This is obtained via the change of variables 
$$
\left\{x=\frac{\phi-c}{\sqrt{\Delta}} + \theta, \qquad 
y= \frac{\phi'}{\sqrt{\Delta}} \right\},
$$ 
where $\Delta\!:= c^2-8b>0$ and $\theta\!:= \frac{1}{4}\!\left(\frac{3c}{\sqrt{\Delta}}-1\right)>0$. 
Periodic orbits are obtained from the planar system 
\begin{equation}\label{e-sys_x}
       \left\{
      \begin{array}{l}
     x' =y,\\[2pt]
     y' = \dfrac{x+2x^2 -y^2}{x-\theta},
      \end{array}\right.
\end{equation}
which is analytic away from the singular line $x=\theta$ and has the analytic first integral 
$$	
H(x,y)=A(x) + C(x)y^2,
$$  
with $A(x)=- \frac{1}{6} x^2(3x^2+2x(1-2\theta)-3\theta)$ and 
$C(x) = \frac{1}{2} (x-\theta)^2$. 
The first integral satisfies the hypotheses in \cite[Theorem A]{Vil2014} with $B(x)=0$. Moreover, its integrating factor   $K(x)=(x-\theta)^2$ depends only on $x$. The function $A(x)$ satisfies $A(0)=0$ and has a minimum at $x=0$, which yields a center at $(0,0)$, and two local maxima at $x=\theta$ and $x=-\frac{1}{2}$, the latter one yielding a saddle point at $(-\frac{1}{2},0)$. 
 The period function associated to the center of the differential system \eqref{e-sys_x} can be written as 
 \begin{equation*}
   \ell(h)=\int_{\gamma_h}\frac{dx}{y} \quad \text{ for } \; h\in(0,h^*), 	 	
 \end{equation*}
 where $\gamma_h$ is the  periodic  orbit inside the energy level 
 $\{ (x,y) : \;\; H(x,y) = h\}$ with either $h^*= A(-\frac{1}{2})$ for $\theta\in[\frac{1}{2},\infty)$ or $h^*= A(\theta)$ for $\theta\in (0,\frac{1}{2})$.

When $\theta\geq\frac{1}{2}$, we find that $A(\theta)\geq A(-\frac{1}{2})$, in which  case the period annulus $\mathscr P$ is bounded by the homoclinic orbit at the saddle point, see the right panel of Figure \ref{fig-theta}. When $\theta\in(0,\frac{1}{2})$ the outer boundary of $\mathscr P$ consists of a trajectory with $\alpha$ and $\omega$-limit in the straight line $\{x=\theta\}$ and the segment between these two points, see the left panel of Figure \ref{fig-theta}. In view of these structural differences, we will study the monotonicity of the period function separately for $\theta\in(0,\frac{1}{2})$ and  for $\theta\geq \frac{1}{2}$.

\begin{figure}[htb!]
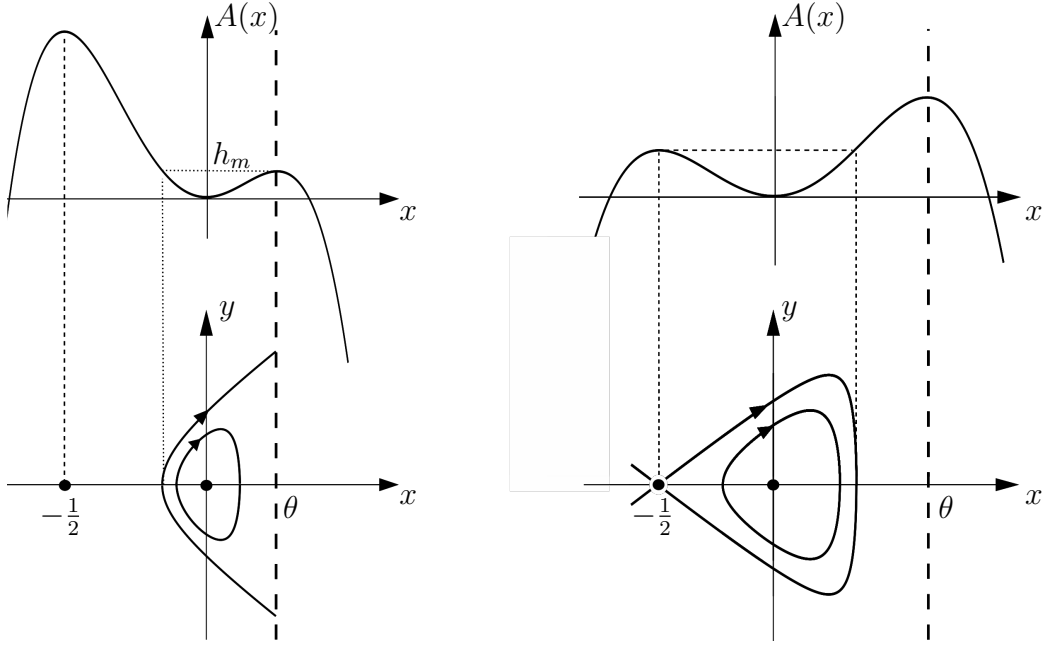

	\begin{lpic}{figure_theta(0.65,0.65)}
		\lbl{49,130;$A(x)$}
		\lbl{82,90;$x$}
		\lbl{82,32;$x$}
		\lbl{45,70;$y$}
		\lbl{58,30;$\theta$}
		\lbl{11,28;$-\frac{1}{2}$}
		\lbl{46,102;$h_m$}
		
		\lbl{165,130;$A(x)$}
		\lbl{210,90;$x$}
		\lbl{210,32;$x$}
		\lbl{160,70;$y$}
		\lbl{192,30;$\theta$}
		\lbl{132,28;$-\frac{1}{2}$}
	\end{lpic}
	
	\caption{A sketch of the period annulus $\mathscr P$ of the center at the origin of system \eqref{e-sys_x} for $\theta\in(0,\frac{1}{2})$ (left) and $\theta\in[\frac{1}{2},\infty)$ (right). }
	\label{fig-theta}
\end{figure}

Recall that a mapping $\sigma$ is said to be an involution if $\sigma\circ\sigma=\mbox{Id}$. The function  $A$ defines an \emph{involution} $\sigma$ satisfying $A=A\circ \sigma$. We find that 
 \begin{equation}\label{e-sig}
  A(x)-A(z)=- \frac{1}{6} (x-z)S(x,z), 
 \end{equation}
where 
$$
S(x,z):=3x^3 + (3z + 2 -4\theta)x^2 + (  3z^2 + 2z -4\theta z- 3\theta )x + 3z^3  + 2z^2 - 4\theta z^2 -3 \theta z,
$$
such that  $S\bigl(x,\sigma(x)\bigr)=0$. Let $(x_{\ell},x_r)$ be the projection onto the $x$-axis of the period annulus $\mathscr P$ around the center at the origin of the differential system \eqref{e-sys_x}. Given an analytic function $f$ on $(x_{\ell},x_r)\setminus\{0\}$ one can define its $\sigma$-\emph{balance} to be
\[
 \mathscr B_{\sigma}\bigl(f\bigr)(x)\!:=\frac{f(x)-f\bigl(\sigma(x)\bigr)}{2}.
\]
The number of zeros of the sigma balance of certain polynomials gives upper bounds for the number of critical points of the period function, see \cite{Vil2014}, as we will study below. The proof of the following auxiliary result is a straightforward computation of the first coefficients in the Taylor expansion of the period function $\ell(h)$ using standard techniques (see for example \cite{Gasull1997}).
 
\begin{lemma}
	\label{L-T0}
	The first two period constants of the period function $\ell(h)$ are given by 
	\begin{align*}
	&\Delta_1= \frac{\pi(4\theta+1)(5\theta-1)}{6\theta},\\
	&\Delta_2=-\frac{\pi}{288\theta^2}\left (-48\theta^6 - 144\theta^5 - 1808\theta^4 + 1152\theta^{5/2} + 1096\theta^3 + 741\theta^2 + 322\theta + 13\right ),
	\end{align*}
	such that the expansion of $\ell$ is given by $\ell(h) = 2\pi \sqrt{\theta}  + \Delta_1 h^2 + \Delta_2 h^4 + \mathcal O(h^5)$. 
\end{lemma}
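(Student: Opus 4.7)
The plan is to compute the Taylor expansion of $\ell(h)$ at $h=0$ by reducing $A$ to its quadratic normal form in a neighborhood of the center and then integrating the resulting expression term by term, following the standard procedure described in \cite{Gasull1997}.

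First, from the first integral $H(x,y) = A(x) + C(x) y^2 = h$ on the orbit $\gamma_h$, together with $\dot x = y$, the period is
\begin{equation*}
\ell(h) = 2 \int_{x_-(h)}^{x_+(h)} \sqrt{\frac{C(x)}{h - A(x)}}\, dx,
\end{equation*}
where $x_{\pm}(h)$ are the turning points of $\gamma_h$ closest to the origin. Since $A(0) = A'(0) = 0$ and $A''(0) = \theta > 0$, a direct power-series argument yields a unique real-analytic change of variable $x = \xi(u) = u + \sum_{k \geq 2} a_k u^k$ near $u = 0$ satisfying $A(\xi(u)) = \tfrac{\theta}{2} u^2$; the coefficients $a_k$ are determined recursively by matching powers of $u$ on both sides of this identity, using the explicit polynomial form of $A$ inherited from the definition of the first integral.

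With this change of variable the turning points become symmetric, $u_{\pm}(h) = \pm\sqrt{2h/\theta}$, and the further substitution $u = \sqrt{2h/\theta}\sin\psi$ produces
\begin{equation*}
\ell(h) = \frac{2\sqrt{2}}{\sqrt{\theta}}\int_{-\pi/2}^{\pi/2} G\!\left(\sqrt{2h/\theta}\,\sin\psi\right)\, d\psi, \qquad G(u) := \sqrt{C(\xi(u))}\,\xi'(u),
\end{equation*}
where $G$ is analytic at $u = 0$ with $G(0) = \theta/\sqrt{2}$. Expanding $G(u) = \sum_{k \geq 0} g_k u^k$ and integrating term by term, using the elementary identities $\int_{-\pi/2}^{\pi/2} \sin^k\psi\, d\psi = 0$ for $k$ odd and $\int_{-\pi/2}^{\pi/2} \sin^k\psi\, d\psi = \pi \binom{k}{k/2} 2^{-k}$ for $k$ even, yields the Taylor expansion of $\ell$ around $h = 0$. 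Matching this expansion with the ansatz $\ell(h) = 2\pi\sqrt{\theta} + \Delta_1 h^2 + \Delta_2 h^4 + \mathcal{O}(h^5)$ then identifies $\Delta_1$ and $\Delta_2$.

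The main obstacle is purely computational: to control the remainder at the stated order one must carry out the expansion of $\xi(u)$ to fairly high order in $u$, and the composition $\sqrt{C(\xi(u))}\,\xi'(u)$ then produces lengthy rational expressions in $\theta$ whose contributions to the successive powers of $h$ have to be collected and simplified. No conceptual difficulty arises beyond this standard normal-form procedure, so the closed-form identities in the statement are verified by a direct, if tedious, symbolic calculation which is most efficiently automated with a computer algebra system.
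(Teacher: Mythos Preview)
Your approach is correct and is precisely the ``standard technique'' the paper invokes (citing \cite{Gasull1997}) in lieu of a detailed proof: reduce $A$ to quadratic normal form near the center, symmetrize the turning points, and integrate the resulting analytic integrand term by term so that only even powers survive. The paper gives no further details, so your write-up is in fact a fleshed-out version of the same argument rather than a different one.
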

We are now in position to prove monotonicity of the period function $\ell(h)$ for $\theta \geq \frac{1}{2}$. 

\begin{lemma}
	\label{theta>1/2}
If $\theta \geqslant \frac{1}{2},$ then the period function $\ell(h)$ is monotonically increasing.
\end{lemma}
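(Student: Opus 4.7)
The plan is to apply the monotonicity criterion of \cite[Theorem A]{Vil2014}, which is precisely the tool introduced in the paragraphs above, via the $\sigma$-balance $\Bs$ associated with the involution $\sigma$ defined by $A$. All structural hypotheses of that theorem have already been verified just before the lemma: the first integral has the form $H = A(x) + C(x) y^2$ with $A$, $C$ real-analytic on a neighborhood of the origin, $A(0)=0$ is a nondegenerate minimum, and the integrating factor $K(x)=(x-\theta)^2$ depends only on $x$. What remains is to identify the right polynomial associated to the triple $(A,C,K)$ and to show that its $\sigma$-balance has constant (and necessarily positive) sign on the $x$-projection of the period annulus $\P$, which for $\theta \geqslant \frac12$ is the interval $(-\frac12, x^*)$ with $x^* < \theta$ the unique positive root of $A(x) = A(-\frac12)$.

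Concretely, I would follow the same road map as in \cite{Geyer2015b}. The criterion of \cite{Vil2014} reduces $\ell'(h)>0$ to the positivity of $\Bs(f)(x)$ on the projection of $\P$, where $f$ is an explicit rational (in fact polynomial, after clearing the common factor $(x-\theta)^2$) combination of $A$, $A'$, $C$, $C'$ and $K$. After clearing denominators, $\Bs(f)(x)$ is represented by a polynomial $T_\theta(x,z)$ evaluated along the curve $\{S(x,z)=0\}$, with $S$ the polynomial from \eqref{e-sig} encoding the involution. Its zeros on $\P$ are therefore zeros of the resultant
\[
\widetilde T_\theta(x) := \operatorname{Res}_z\!\bigl(S(x,z),\, T_\theta(x,z)\bigr),
\]
modulo the trivial factor $(x-z)$. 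The strategy is to prove that $\widetilde T_\theta$ has no zero in $(-\frac12, x^*)$ for any $\theta \in [\frac12,\infty)$, which by continuity in $\theta$ and a single sign check then forces $\Bs(f)>0$ on the full projection of $\P$.

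For the sign certification I would mirror the argument in Lemma \ref{theorem-increasing}: apply \cite[Lemma 8.1]{GGG} to the family $\{\widetilde T_\theta\}_{\theta \geqslant 1/2}$ by checking (i)--(iv), using $\operatorname{Disc}_x(\widetilde T_\theta)$ and $\operatorname{Res}_x(\widetilde T_\theta, V')$, $\operatorname{Res}_x(\widetilde T_\theta, P_\theta)$ (with $P_\theta$ the polynomial whose roots include the boundary point $x^*$), to rule out zeros in the open interval as $\theta$ varies, and handling finitely many exceptional values of $\theta$ by monotonicity of $\widetilde T_\theta$ in $\theta$ (via Sturm's method applied to $\partial_\theta \widetilde T_\theta$). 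A sanity check is supplied by Lemma \ref{L-T0}: since $\Delta_1 = \tfrac{\pi(4\theta+1)(5\theta-1)}{6\theta} > 0$ for $\theta \geqslant \frac12$, the period function is already strictly increasing in a neighborhood of the origin, so any constant sign extracted from the $\sigma$-balance argument must be the positive one. Evaluating at a convenient value such as $\theta=1$ (where the expressions simplify considerably) and checking $\widetilde T_1(x) \neq 0$ on $(-\frac12, x^*(1))$ by Sturm's method then pins down the sign globally.

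The principal obstacle will be purely computational: the polynomial $\widetilde T_\theta(x)$ is of considerably higher degree than the $R_\eta$ from Lemma \ref{theorem-increasing}, its coefficients are high-degree polynomials in $\theta$, and the boundary $x=x^*(\theta)$ is itself only algebraic in $\theta$, so isolating its position relative to the zeros of $\widetilde T_\theta$ requires careful rational enclosures as in the previous proof. A secondary subtlety is the behavior at the ``vertical'' boundary of $\P$ (where trajectories approach the singular line $x=\theta$) when $\theta$ is large; here one must verify that $\Bs(f)$ remains bounded away from zero as $x \uparrow x^*(\theta)$, which I expect to follow from the leading-order asymptotics of $A$ and $C$ near $x=\theta$, but which needs to be checked separately from the interior argument.
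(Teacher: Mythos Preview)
Your plan is essentially the paper's own proof: apply Theorem~A of \cite{Vil2014} by showing that the $\sigma$-balance of the canonical function $\ell_1$ is nonvanishing on the projection of $\P$, via resultants with $S$ and \cite[Lemma~8.1]{GGG}, and then fix the sign using $\Delta_1>0$ from Lemma~\ref{L-T0}. One simplification you miss: since $\Bs(f)\circ\sigma = -\Bs(f)$, it suffices to work on the half-interval $(-\tfrac12,0)$ with \emph{explicit} endpoints, which makes the boundary checks in \cite[Lemma~8.1]{GGG} elementary and removes both your worry about locating $x^*$ and your concern about the singular line $x=\theta$ (which for $\theta\geqslant\tfrac12$ is never reached by $\P$, as the outer boundary is the homoclinic through the saddle at $x=-\tfrac12$).
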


\begin{proof}
For $\theta \geqslant \frac{1}{2},$ the projection of the period annulus on the $x$-axis is $(-\frac{1}{2},x_{r})$, where $A(x_r)=A(-\frac{1}{2})$. We will apply \cite[Theorem A and B]{Vil2014} and study the number of zeros of the sigma balance $\Bs (\ell_1)$ of $\ell_1$, where $\ell_1$ is defined in terms of $A, C$ and $K$, see \cite{Vil2014}, and takes the form
\begin{equation*}
     \ell_1(x)=\frac{\sqrt{2}}{6}\frac{(4\theta+1)(x+1)}{(2x+1)^3(x-\theta)}.
\end{equation*}
Note that since $\mathscr B_{\sigma}(f)\circ\sigma=-\mathscr B_{\sigma}(f)$ and $\sigma$ maps $(0,x_r)$  to $(x_{\ell},0)$,  we may for convenience study the latter interval, which in this case is $(-\frac{1}{2},0)$. 
We find that  $L\bigl(x,\ell_1(x)\bigr)\equiv 0$ with 
\begin{align*}
L(x,y)\!:=&\left( 4\,\theta+1 \right)  \left( x-y \right)  \left( -8\,x-8 \right) {y}^{3}
+ \left( 8\,x\theta-8\,{x}^{2}+8\,\theta-20\,x-12 \right) {y}^{2}\\
&+ \left( 8\,\theta\,{x}^{2}-8\,{x}^{3}+20\,x\theta-20\,{x}^{2}+12\,\theta-18\,x-6 \right) y\\
&+8\,\theta\,{x}^{2}-8\,{x}^{3}+12\,x\theta-12\,{x}^{2}+5\,\theta-6\,x-1.
\end{align*}
We find that 
$$
\mbox{Res}_z\bigl(L(x,z),L(y,z)\bigr)=8192(\theta+1)(4\theta+1)^8(x-y)^4  T
(x,y)^4,
$$ 
with $T$ a bivariate polynomial of degree $12$ in~$x$ and~$y$, which also depends polynomially on~$\theta$. Finally 
$$
\mathscr R(x)\!:=\mbox{Res}_y\bigl(S(x,y),T(x,y)\bigr)=(2x+1)^{12}(x-\theta)^4(\theta+1)^3 (4\theta+1)^4 R(x)^4,
$$ 
where~$R$ is a univariate polynomial of degree $10$ in $x$ depending polynomially on~$\theta$, and $S$ was defined in \eqref{e-sig}.

Let us denote by $\mathcal Z(\theta )$ the number of roots of $R$ on $(-\frac{1}{2},0)$ counted with multiplicities. We claim that $\mathcal Z(\theta)=0$ for all $\theta \geqslant\frac{1}{2}.$ For $\theta =\frac{1}{2}$ this can be easily verified by applying Sturm's method, see \cite[Theorem 5.6.2]{SB}. To prove it for $\theta >\frac{1}{2}$  note that
\begin{align*}
  &R(0)= (5\theta - 1)(\theta + 1)(64\theta^3 + 48\theta^2 + 21\theta + 1)  \\
\intertext{and}
  &R\left(-\frac{1}{2} \right)= \frac{9}{2}(2\theta - 1)(1 + 2\theta)^4\notag, 
\end{align*}
which do not vanish for $\theta>\frac{1}{2}$. The discriminant of $R$ with respect to~$x$, $\mbox{Disc}_x(R)$, is a polynomial of degree $70$ in $\theta$ with no real roots for $\theta>\frac{1}{2}$. Choosing one value of~$\theta>\frac{1}{2}$ and applying Sturm's method, we find that $\mathcal Z(\theta )=0$ for all $\theta \in (\frac{1}{2},+\infty)$ using \cite[Lemma 8.1]{GGG}. Therefore, $\mathscr R$ does not vanish on $(-\frac{1}{2},0)$ for any $\theta\geqslant\frac{1}{2}.$ In view of $(a)$ in \cite[Theorem A]{Vil2014} this implies that $\Bs\bigl(\ell_1\bigr)\neq 0$ on $(-\frac{1}{2},0).$ This proves the validity of the claim and hence, by applying \cite[Theorem A]{Vil2014} with $n=0,$ it follows that the period function is monotonous for $\theta\geqslant\frac{1}{2}.$ Finally, the result follows by noting that, thanks to Lemma \ref{L-T0}, the first period constant $\Delta_1$ is positive for $\theta\geqslant \frac{1}{2}$. \end{proof}

Now we study the period function $\ell(h)$ for $\theta \in(0, \frac{1}{2})$. 
The following lemma describes the behaviour of the period function near its outer boundary. 

\begin{lemma}
\label{L-T'lim}
If $\theta  \in (0,\frac{1}{2})$, then the period function $\ell(h)$ satisfies  $\lim\limits_{h\to h_m} \ell'(h)=-\infty$, where $h_m =A(\theta )$ is the energy level of the outer boundary of $\P$.
\end{lemma}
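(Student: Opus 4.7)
The plan is to isolate the asymptotically dominant contribution to $\ell'(h)$ coming from the vicinity of the degenerate right turning point $v=0$ (i.e.\ $x=\theta$), where the local maximum of $A$ (so $A'(\theta)=0$) coincides with the zero of the integrating factor $C(x)=(x-\theta)^2/2$. The left turning point $x_-(h)$ tends to a point $x_-^{\lim}\in(-\tfrac{1}{2},0)$ with $A'(x_-^{\lim})\neq 0$ (a direct check shows $A(-\tfrac{1}{2})>A(\theta)=h_m$ when $\theta<\tfrac{1}{2}$), so the contribution of that endpoint to $\ell'(h)$ remains bounded by standard arguments. Writing $\ell(h)=\sqrt{2}\bigl(J_L(h)+J_R(h)\bigr)$ by splitting the orbit at $x=0$, it suffices to analyze the right half
$$J_R(h)=\int_{v_*(h)}^{\theta}\frac{v\,dv}{\sqrt{h-A(\theta-v)}},\qquad v_*(h):=\theta-x_+(h),$$
and to prove $J_R'(h)\to-\infty$.

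First I would split $J_R=J_{R,*}+J_{R,\mathrm{reg}}$ at a fixed intermediate value $v_1\in(0,\theta)$, so that $J_{R,\mathrm{reg}}(h)=\int_{v_1}^{\theta}\cdots$ is manifestly smooth in $h$ near $h_m$. For the singular piece $J_{R,*}(h)=\int_{v_*(h)}^{v_1}\cdots$, the next step is to introduce a normal-form variable $s$ via the implicit relation
$$h_m-A(\theta-v(s))=\alpha s^2,\qquad \alpha:=-A''(\theta)/2>0.$$
Since $A'(x)=-2x(x-\theta)(x+\tfrac{1}{2})$ is strictly positive on $(0,\theta)$, the implicit function theorem provides a smooth monotone change $v=v(s)$ on $[0,s_1]$ with $v(0)=0$, $v'(0)=1$, and $v(s_1)=v_1$. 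In this variable the turning point becomes clean: $s_*(h)=\sqrt{(h_m-h)/\alpha}$.

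Substituting $v=v(s)$ and then $s=\sqrt{s_*(h)^2+w^2}$ removes the turning-point singularity:
$$J_{R,*}(h)=\frac{1}{\sqrt{\alpha}}\int_{0}^{w_1(h)} g\!\left(\sqrt{s_*(h)^2+w^2}\right)dw,\qquad g(s):=\frac{v(s)\,v'(s)}{s},$$
where $g$ extends smoothly to $s=0$ with $g(0)=1$ and $w_1(h)=\sqrt{s_1^2-s_*(h)^2}$. Using the identity $s_*(h)\,s_*'(h)\equiv -1/(2\alpha)$, so that $\partial s/\partial h=-1/(2\alpha s)$, differentiation under the integral yields
$$J_{R,*}'(h)=\frac{1}{\sqrt{\alpha}}\left(g(s_1)\,w_1'(h)-\frac{1}{2\alpha}\int_0^{w_1(h)}\frac{g'(s)}{s}\,dw\right),$$
with a bounded boundary term. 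For $w$ near $0$ the integrand satisfies $g'(s)/s\sim g'(0)/\sqrt{s_*^2+w^2}$, hence
$$\int_0^{\eta}\frac{g'(s)}{s}\,dw \;\sim\; g'(0)\sinh^{-1}(\eta/s_*) \;\sim\; g'(0)\log(1/s_*) \;\longrightarrow\; +\infty$$
as $s_*\to 0^+$, and the sign of the resulting divergence of $J_{R,*}'(h)$ is opposite to the sign of $g'(0)$.

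Finally, I would compute $g'(0)$ explicitly. A two-term Taylor expansion of the implicit relation produces $v(s)=s-A'''(\theta)/(12\alpha)\,s^2+O(s^3)$ and hence $g'(0)=A'''(\theta)/(2A''(\theta))$. Substituting $A''(\theta)=-\theta(2\theta+1)$ and $A'''(\theta)=-2(4\theta+1)$ gives
$$g'(0)=\frac{4\theta+1}{\theta(2\theta+1)}>0\quad\text{for all }\theta\in(0,\tfrac{1}{2}),$$
so that $J_{R,*}'(h)\to-\infty$, and therefore $\ell'(h)=\sqrt{2}\bigl(J_L'(h)+J_R'(h)\bigr)\to-\infty$. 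The main obstacle is the coincidence $A'(\theta)=C(\theta)=0$ at the outer boundary, which defeats the naive turning-point analysis; the normal-form substitution $v=v(s)$ decouples the two degeneracies, after which the sign of the logarithmic blow-up reduces to the explicit Taylor formula for $g'(0)$, where the precise form of $A$ is essential.
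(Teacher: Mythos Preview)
Your argument is correct and follows a genuinely different route from the paper's proof. The paper does not differentiate $\ell(h)$ directly from its defining integral; instead it quotes an explicit integral representation $\ell'(h)=\frac{1}{h}\int_{\gamma_h}R(x)\,dx/y$ (derived in \cite{Geyer2015b} from $A,C,K$), computes the rational function
\[
R(x)=\frac{x(x+1)(4\theta+1)}{6(2x+1)^2(x-\theta)},
\]
factorizes the quartic $h-A(x)=(x-x_h^-)(x-x_h^+)(x-x_h^\ell)(x-x_h^r)$, and then bounds the two halves by hand: the left half is dominated by a convergent $(x-x_h^-)^{-1/2}$ integral, while for the right half the two coalescing roots $x_h^+,x_h^r\to\theta$ produce an elementary logarithm $\ln\!\bigl((\sqrt{x_h^r/x_h^+}+1)/(\sqrt{x_h^r/x_h^+}-1)\bigr)\to+\infty$ multiplied by a negative supremum $M_2$.

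Your approach instead desingularizes the degenerate turning point $x=\theta$ by the normal-form substitution $h_m-A(\theta-v(s))=\alpha s^2$ and the Euler-type change $s=\sqrt{s_*^2+w^2}$, reducing everything to the single Taylor coefficient $g'(0)=A'''(\theta)/(2A''(\theta))=(4\theta+1)/\bigl(\theta(2\theta+1)\bigr)>0$. This is more structural: it isolates exactly which local data of $A$ at $\theta$ govern the sign of the logarithmic blow-up, and it would carry over verbatim to any analytic potential with the same degeneracy pattern $A'(\theta)=0$, $A''(\theta)<0$ together with a weight vanishing like $(x-\theta)^2$. The paper's method, by contrast, leans on the exact quartic factorization and the explicit form of $R$, which makes the estimates shorter and entirely elementary but tied to this specific system. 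Both arguments arrive at the same logarithmic divergence with negative sign; your claim that $J_L'(h)$ stays bounded near $h_m$ is indeed standard, since $x_-(h)$ converges to a \emph{simple} turning point $x_\ell\in(-\tfrac12,0)$ with $A'(x_\ell)\neq 0$.
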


\begin{proof}
It was proven in \cite{Geyer2015b} that the derivative of the period function $\ell(h)$ can be written as 
 \begin{equation*}
   \ell'(h)=\frac{1}{h}\int_{\gamma_h}\!R(x)\,\frac{dx}{y}, 	
 \end{equation*}
 where 
 $$
 R\!=\frac{1}{2C}\!\left(\frac{K A}{A'}\right)'-\frac{K(AC)'}{4A'C^2}.
 $$  
Taking into account the respective definitions of these quantities, we find that 
 \begin{equation*}
 	R(x)=\frac{x(x+1)(4\theta+1)}{6(2x+1)^2(x-\theta)}. 
 \end{equation*}
 For $\theta\in(0,\frac{1}{2})$ and $h\in(0,h_m)$, we have that $h-A(x)=(x-x_h^-)(x-x_h^+)(x-x_h^\ell)(x-x_h^r)$, where $x_h^\ell<-\frac{1}{2}<x_\ell<x_h^-<0<x_h^+<\theta<x_h^r$, see 
 Figure~\ref{fig-sketch-roots}. In particular, the projection of $\gamma_h$ onto the $x$-axis is $[x_h^-,x_h^+]$. 
 
 \begin{figure}[htb!]
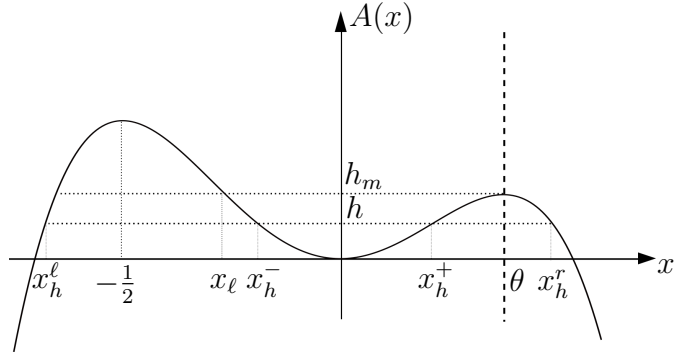

 	\begin{lpic}{roots(0.6,0.6)}
 		\lbl{87,75;$A(x)$}
 		\lbl{150,20;$x$}
 		\lbl{83,40;$h_m$}		
 		\lbl{81,33;$h$}		
 		
 		\lbl{13,17;$x_h^\ell$}				
 		\lbl{28,16;$-\frac{1}{2}$}		
 		\lbl{52,16;$x_\ell$}						
 		\lbl{61,17;$x_h^-$}				
 		\lbl{99,17;$x_h^+$}	
 		\lbl{117,17;$\theta$}	
 		\lbl{125,16;$x_h^r$}			
 	\end{lpic}
 	\caption{The distribution of  roots of the function $h-A(x)$ in the proof of Lemma \ref{L-T'lim} for $\theta\in(0,\frac{1}{2})$.}
 	\label{fig-sketch-roots}
 \end{figure}

We split the integral into two parts, $\ell'(h)= \frac{2}{h}\left (I_1(h)+I_2(h)\right )$, where 
 \begin{equation*}
 	I_1(h) = \int_{x_h^-}^0 f(x,h)\,dx\,\text{ and }
	I_2(h)=\int_0^{x_h^+}f(x,h)\,dx
 \end{equation*}
 with 
 \[
  f(x,h)=\frac{R(x)\sqrt{C(x)}}{\sqrt{h-A(x)}}
  =\frac{-(4\theta+1)x(x+1)}{6\sqrt{2}(2x+1)^2\sqrt{(x-x_h^-)(x-x_h^+)(x-x_h^\ell)(x-x_h^r)}}.
 \]
In order to study $I_1$, let us write $f(x,h)=\frac{g_1(x,h)}{\sqrt{x-x_h^-}}$, where 
 \[ 
  g_1(x,h)\!:=\frac{-(4\theta+1)x(x+1)}{6\sqrt{2}(2x+1)^2\sqrt{(x-x_h^+)(x-x_h^\ell)(x-x_h^r)}}.
 \]  
Note that $g_1$ is a continuous function on $(-\frac{1}{2},0]\!\times\! (0,h_m).$ Consequently there exists $M_1\in\R$ such that $M_1\!:=\sup\bigl\{g_1(x,h):(x,h)\in [x_r,0]\!\times\! [\frac{1}{2}h_m,h_m]\bigr\}.$ In addition, observe that $M_1>0$ for $-\frac{1}{2}<x<0$. Thus for $h\in (\frac{1}{2}h_m,h_m)$ we have that
  \begin{align*}
     I_1 (h)&= \int_{x_h^-}^0\frac{g_1(x,h)dx}{\sqrt{x-x_h^-}}\\
            &\leqslant M_1 \int_{x_h^-}^0\frac{dx}{\sqrt{x-x_h^-}}  \\
            &=2M_1\sqrt{-x_h^-}\\
            &<\sqrt{2}M_1.           
 \end{align*}
In order to study $I_2$ let us write $f(x,h)=\frac{g_2(x,h)}{\sqrt{(x-x_h^+)(x-x_h^r)}}$, where 
 \[ 
  g_2(x,h)\!:=\frac{-(4\theta+1)x(x+1)}{6\sqrt{2}(2x+1)^2\sqrt{(x-x_h^-)(x-x_h^\ell)}}.
 \]  
Since $g_2$ is continuous on $[0,\infty)\!\times\!(0,h_m),$ there exists $M_2\!:=\sup\bigl\{g_2(x,h):(x,h)\in [0,\theta]\!\times\! [\frac{1}{2}h_m,h_m]\bigr\}$ and we observe that $M_2<0$ for $0<x<\theta$. Consequently, if $h\in (\frac{1}{2}h_m,h_m),$ then
  \begin{align*} 	
     I_2 (h)&= \int^{x_h^+}_0\frac{g_2(x,h)dx}{\sqrt{(x-x_h^+)(x-x_h^r)}}\\
     &\leqslant M_2\int^{x_h^+}_0\frac{dx}{\sqrt{(x-x_h^+)(x-x_h^r)}} \\
                      &= M_2 \ln \left(\frac{\sqrt{x_h^r/x_h^+}+1}{\sqrt{x_h^r/x_h^+}-1}\right),
  \end{align*}
  where the upper bound diverges to $-\infty$ as $h \to h_m$ 
since $ M_2<0$ and $x_h^+, x_h^r \to \theta$ as $h \to h_m$. Since $\ell'(h)=\frac{2}{h}\bigl(I_1(h)+I_2(h)\bigr)$, the 
bound on $I_1(h)$ and the divergence of $I_2(h)$ as $h \to h_m$ imply the result.  
\end{proof}

We are now ready to prove  the monotonicity result of the period 
function $\ell(h)$ in the case that $\theta\in(0,\frac{1}{2})$.
\begin{lemma}
\label{theta<1/2}
For $\theta\in(0,\frac{1}{2})$ the period function $\ell(h)$
 \begin{enumerate}[$(a)$] 
  \item is monotonically decreasing for $\theta \in (0,\frac{1}{5}]$,
  \item has a unique critical period, which is a maximum, for 
           $\theta  \in (\frac{1}{5},\frac{1}{2})$.
\end{enumerate}           
\end{lemma}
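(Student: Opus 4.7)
The plan is to follow the same Villadelprat/balance strategy used in Lemma \ref{theta>1/2}, but to extract an \emph{upper bound of one} critical period of $\ell$ rather than zero, and then to combine this upper bound with the boundary asymptotics given by Lemma \ref{L-T0} (at $h = 0$) and Lemma \ref{L-T'lim} (at $h = h_m$) in order to distinguish the two cases.

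First I would set up the period annulus. For $\theta \in (0,\frac{1}{2})$ the projection of $\P$ onto the $x$-axis is an interval $(x_\ell,\theta)$, where $x_\ell \in (-\frac{1}{2},0)$ is the unique root of $A(x_\ell) = A(\theta) = h_m$ in that interval (cf.~the left panel of Figure \ref{fig-theta}). The function $\ell_1$ and the auxiliary polynomials $L(x,y)$, $T(x,y)$, $R(x)$ coming from the resultant chain are formally unchanged from the proof of Lemma \ref{theta>1/2}; what changes is only the interval on which we must count the zeros of $\Bs(\ell_1)$, namely $(x_\ell,0)$ instead of $(-\frac{1}{2},0)$.

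The analytical core is then to prove that $R$ has at most one zero in $(x_\ell,0)$ for every $\theta\in(0,\frac{1}{2})$. Since $x_\ell$ is not available in closed form, I would mirror the trick from the proof of Lemma \ref{theorem-increasing} and introduce an auxiliary polynomial whose roots include $x_\ell$, e.g.~$P_\theta(x) := \mbox{Res}_y(A(y) - A(\theta),(y - x))$ or, more directly, the factor of $A(x)-A(\theta)$ corresponding to the root $x_\ell$, so that the condition $R(x_\ell)\neq 0$ is accessible via $\mbox{Res}_x(R,P_\theta)$. After that, I would apply [GGG, Lemma 8.1] on $I=(0,\frac{1}{2})$, verifying the hypotheses by: (i) checking $R(0)\neq 0$ and $\mbox{Res}_x(R,P_\theta)\neq 0$ on $I$ apart from finitely many explicit values of $\theta$, which would be treated separately by a local monotonicity argument on $R$ with respect to $\theta$ (exactly as the value $\eta_1$ was handled in Lemma \ref{theorem-increasing}); (ii) checking $\mbox{Disc}_x(R)\neq 0$ on $I$; and (iii) evaluating the count of roots of $R$ in $(x_\ell,0)$ at one convenient rational value of $\theta$ in each connected component of $I$ minus the finitely many special values, using Sturm's method together with explicit rational enclosures of $x_\ell$ obtained from Sturm applied to $P_\theta$. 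The main obstacle is precisely this step, namely the large-degree polynomial bookkeeping together with the absence of an explicit $x_\ell$; it forces a sequence of indirect comparisons and the cautious handling of the isolated parameter values at which the resultants vanish. Once completed, $\Bs(\ell_1)$ has at most one zero on $(x_\ell,0)$, and [Vil2014, Theorem A] with $n=1$ yields that $\ell$ has at most one critical point on $(0,h_m)$.

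The two statements of the lemma then follow by matching this upper bound with the boundary data. For $\theta \in (0,\frac{1}{5})$ Lemma \ref{L-T0} gives $\Delta_1 < 0$, so $\ell$ is decreasing near $h=0$, while Lemma \ref{L-T'lim} forces $\ell'(h) \to -\infty$ as $h \to h_m$; any interior critical point would have to be accompanied by a second one in order to reconcile the two negative slopes, contradicting the upper bound, so $\ell$ is strictly monotonically decreasing. The borderline $\theta = \frac{1}{5}$ is disposed of in the same way after noting that $\Delta_2\bigl(\tfrac{1}{5}\bigr)<0$ (a direct substitution into the formula of Lemma \ref{L-T0}), which yields $\ell''(0)=0$ and $\ell$ still strictly decreasing near $h=0$. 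For $\theta \in (\frac{1}{5},\frac{1}{2})$ one has $\Delta_1>0$, so $\ell$ is initially increasing and, by Lemma \ref{L-T'lim}, eventually decreasing; hence at least one critical point exists, and by the upper bound it is unique and necessarily a maximum. This proves $(a)$ and $(b)$.
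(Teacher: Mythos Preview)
There is a genuine gap in your plan. The step where you conclude ``at most one critical period'' from $\Bs(\ell_1)$ having at most one zero, by invoking \cite[Theorem~A]{Vil2014} with $n=1$, does not go through: that theorem requires the index $i$ of the balance function $\ell_i$ to satisfy $i>n$. With $\ell_1$ you have $i=1$, so the only usable conclusion is monotonicity ($n=0$) in the case $\Bs(\ell_1)\neq 0$. This is precisely why the paper, after finding that $\Bs(\ell_1)$ has one zero for $\theta\in(\frac{1}{5},\frac{1}{2})$, states that ``the criterion in \cite[Theorem~A]{Vil2014} does not apply'' and passes to the next balance function $\ell_2$. Only after a second resultant computation producing a new univariate polynomial $R$ of degree $30$ and verifying that $\Bs(\ell_2)$ has exactly one zero does the condition $i=2>1=n$ yield the bound of at most one critical period. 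Your strategy must therefore split: for $\theta\in(0,\frac{1}{5}]$ one shows $\Bs(\ell_1)$ has no zero (monotonicity directly), while for $\theta\in(\frac{1}{5},\frac{1}{2})$ one is forced to compute $\ell_2$ and repeat the resultant chain.

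Your concluding boundary-matching argument via Lemmas~\ref{L-T0} and~\ref{L-T'lim} is correct and is exactly how the paper finishes. A secondary remark: counting zeros on $(0,\theta)$ rather than $(x_\ell,0)$ gives two explicit endpoints and eliminates the need for the auxiliary polynomial $P_\theta$ and the rational enclosure of $x_\ell$; since $\Bs(f)\circ\sigma=-\Bs(f)$ the two intervals are equivalent, but $(0,\theta)$ makes the bookkeeping substantially lighter.
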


\begin{proof}
For $\theta\in(0,\frac{1}{2})$ the projection of the period annulus on the $x$-axis is $(x_\ell,\theta)$, where $A(x_\ell)=A(\theta)=h_m$, see Figure \ref{fig-sketch-roots}. We proceed in exactly the same way as we did for $\theta>\frac{1}{2}$, i.e.~applying \cite[Theorems A and B]{Vil2014}. 

Let us now denote by $\mathcal Z(\theta )$ the number of roots of $R$ on $(0,\theta)$ counted with multiplicities and let $R$ be defined as in the proof of Lemma \ref{theta>1/2}. We find that $R(0) $ has a root at $\theta=\frac{1}{5}\in(0,\frac{1}{2})$, while 
$$
R(\theta)=(\theta + 1)^3(2\theta - 1)^3(1 + 2\theta)^4
$$ 
does not have a real root for $\theta\in(0,\frac{1}{2})$. The discriminant of $R$ with respect to $x,$ $\mbox{Disc}_{x}(R),$  is a polynomial of degree $70$  that has only one real root on $(0,\frac{1}{2})$ in $\theta=\frac{1}{5}$. Therefore, $\mathcal Z(\theta )$ is constant on $I_1:=(0,\frac{1}{5})$ and on $I_2:=(\frac{1}{5},\frac{1}{2})$. Choosing $\theta =\frac{1}{10}\in I_1$ we find that $R$ does not vanish on $(0,\frac{1}{10})$ and hence $\mathcal Z(\theta)=0$ for all $\theta \in(0,\frac{1}{5})$. For $\theta=\frac{1}{5}$, we find that $R\neq 0$ on $(0,\frac{1}{5})$ as  well by applying Sturm's method. Hence, it follows from \cite[Theorem A]{Vil2014} that $\Bs\bigl(\ell_1\bigr)\neq 0$ on $(0,\theta)$, and we may conclude that the period function is monotonous for $\theta\in (0,\frac{1}{5}]$. On the other hand, choosing $\theta =\frac{3}{10}\in I_2$ we find that $R$ vanishes once, which implies that the criterion in \cite[Theorem A]{Vil2014}  does not apply. Therefore, we move on to studying $\Bs\bigl(\ell_2\bigr)$, where 
\begin{equation*}
	\ell_2 = \frac{\sqrt 2}{36}\frac{(4\theta+1)P(x)}{(x-\theta)^3(2x+1)^5},
\end{equation*}
and $L(x,\ell_2(x))\equiv 0$ with $P$ and $L$  polynomials which we omit for the sake of brevity. As before, we compute $\mbox{Res}_z\bigl(L(x,z),L(y,z)\bigr)=T(x,y)$, with $T$ a bivariate polynomial which also depends polynomially on~$\theta$, and $\mathscr R(x)\!:=\mbox{Res}_y\bigl(S(x,y),T(x,y)\bigr)=12\sqrt{2}x(-x + \theta)^3(4\theta + 1)^3(2x + 1)^5 R(x)$, where~$R$ is a univariate polynomial of degree $30$ in $x$ depending polynomially on~$\theta$, and $S$ was defined in \eqref{e-sig}.

Let us denote  by $\mathcal Z(\theta )$ to be the number of roots of $R$ on $(0,\theta)$ counted with multiplicities. We claim that $\mathcal Z(\theta)=1$ for all $\theta \in(\frac{1}{5},\frac{1}{2}).$ We find that in that parameter interval, $R(0)R(\theta)\mbox{Disc}_x(R)\neq 0$ and hence $\mathcal Z(\theta)$ is constant. Evaluating in $\theta =\frac{3}{10}\in(\frac{1}{5},\frac{1}{2})$ and using Sturm's method we find that $R(x)$ has exactly one real root in $(0,\frac{3}{10})$ and hence $\mathcal Z(\theta)=1$ for all $\theta\in(\frac{1}{5},\frac{1}{2})$. In view of \cite[Theorem A]{Vil2014} for $i=2>1=n$ we may conclude that the number of critical  periods is at most $1$.

Recall from Lemma \ref{L-T'lim} that $\ell'(h)\to -\infty$ as $h$ tends to $h_m$ for all $\theta \in (0,\frac{1}{2})$. Since the first period constant $\Delta_1$ computed in Lemma \ref{L-T0} is negative for $\theta \in (0,\frac{1}{5})$ and positive for $\theta \in (\frac{1}{5},\frac{1}{2})$, we conclude that the period function $\ell(h)$ is monotonous decreasing near both endpoints of $(0,h_m)$ for all $\theta\in(0,\frac{1}{5})$, while  it is increasing near $h=0$ and decreasing near $h=h_m$ for $\theta\in(\frac{1}{5},\frac{1}{2})$. For $\theta=\frac{1}{5}$ we have that $\Delta_1=0$ and $\Delta_2<0$, and hence the period function is decreasing near the endpoint $h=0$. Taking into account the upper bounds derived above, we may conclude that the period function $\ell(h)$ is monotonous decreasing for $\theta \in  (0,\frac{1}{5}]$ and it has a unique critical period which is a maximum for $\theta\in(\frac{1}{5},\frac{1}{2})$. 
\end{proof}

\begin{remark}
	For the sake of completeness we remark that the limit value of the integral defining the period function at the right endpoint of its interval of definition is given by
	\begin{equation*}
		\ell_{h_m}=\frac{1}{2}\ln \left ( \frac{(\theta +1)(1-2\theta)}{4\theta + 1 +3 \sqrt{\theta(1+2\theta)}}\right ),
	\end{equation*}
which is positive and finite on $(0,\frac{1}{2})$, and can be obtained by standard techniques. 
\end{remark}

We finish this section with the proof of Lemma \ref{lem-nonmonotonicity}. 

\begin{proof}[Proof of Lemma \ref{lem-nonmonotonicity}]
The smooth periodic solutions of the second-order equation (\ref{CHode}) are periodic orbits of system \eqref{e-sys_x}, which are parametrized by $h\in(0,h_m)$ and  whose periods are assigned by the period function $\ell(h)$.  A straightforward computation shows that $\mathfrak L(a,b) = \ell(h)$ and 
$$
a=-\left(\frac{\sqrt{\Delta}-3c}{4}\right)^2\left(2h\frac{\Delta}{\theta}+\frac{\sqrt{\Delta}-3c}{4}\left(\frac{\sqrt{\Delta}-3c}{4}+c\right )\right).
$$
Therefore, $\frac{d a}{dh}<0$ and  so for fixed $b\in (-c^2,\frac{1}{8}c^2)$ and $c>0$ we have that 
$$
{\rm sign}(\partial_a \mathfrak L(a,b))=-{\rm sign}(\ell'(h)), 
$$
which means that the monotonicity properties of $\ell(h)$ imply those of $\mathfrak L(a,b)$. More precisely, in view of the definition of $\theta$, the parameter regime $\theta \geq \frac{1}{2}$ corresponds to values $b\in[0,\frac{1}{8}c^2)$ for which $\partial_a\mathfrak L(a,b)<0$ in view of Lemma \ref{theta>1/2}. On the other hand, 
the value $\theta = \frac{1}{5}$ corresponds to $b = -\frac{2}{9} c^2$ and 
we infer from Lemma \ref{theta<1/2} that $\partial_a\mathfrak L(a,b)>0$ for $b\in(-c^2,-\frac{2}{9}c^2)$  whereas $\mathfrak L(a,b)$ has a unique critical point in $a$, which is a maximum, for $b\in(-\frac{2}{9}c^2,\frac{1}{8}c^2)$. This concludes the proof.
\end{proof}

\begin{figure}[htb!]
	\includegraphics[width=5cm,height=4cm]{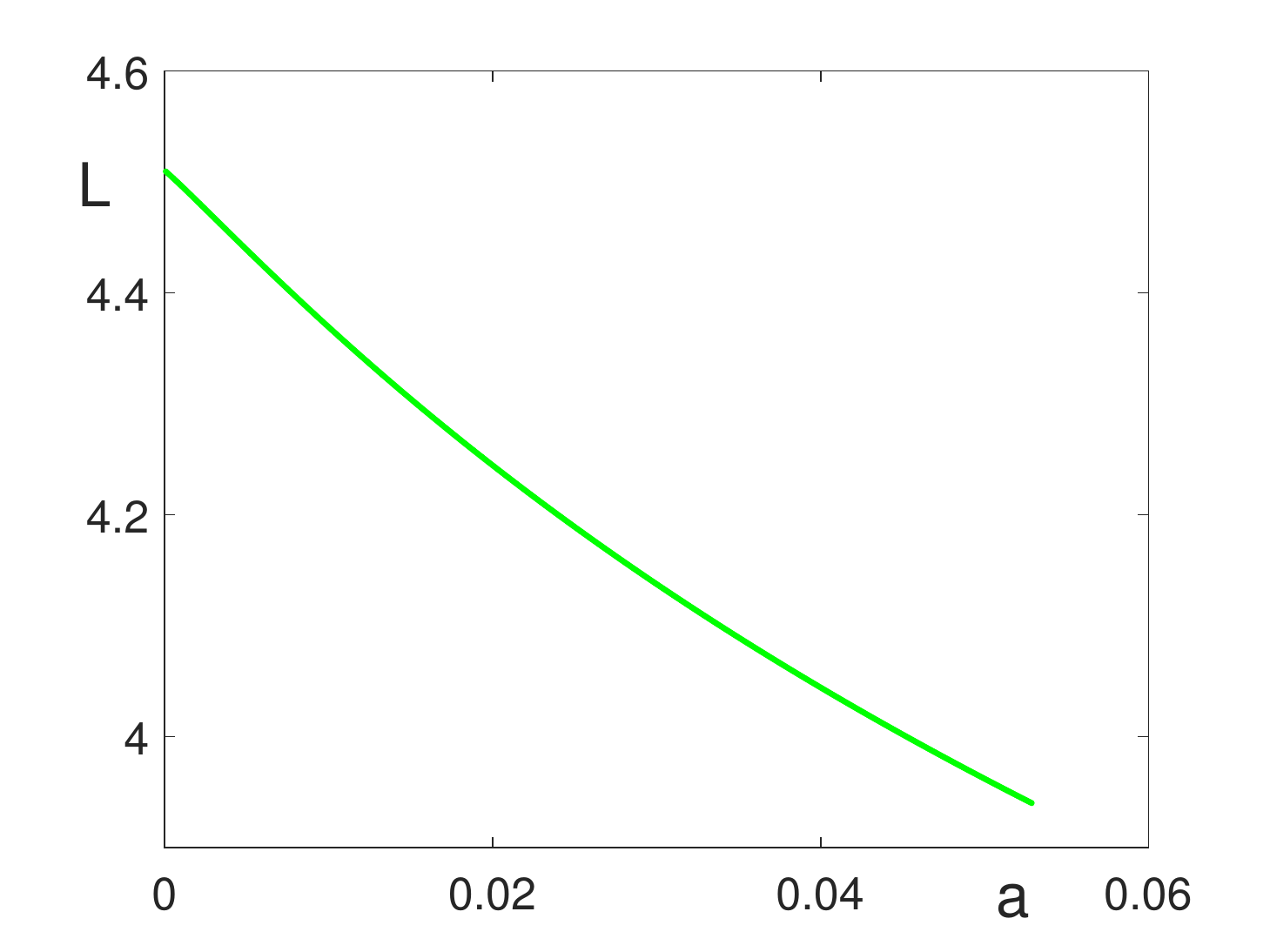} 
	\includegraphics[width=5cm,height=4cm]{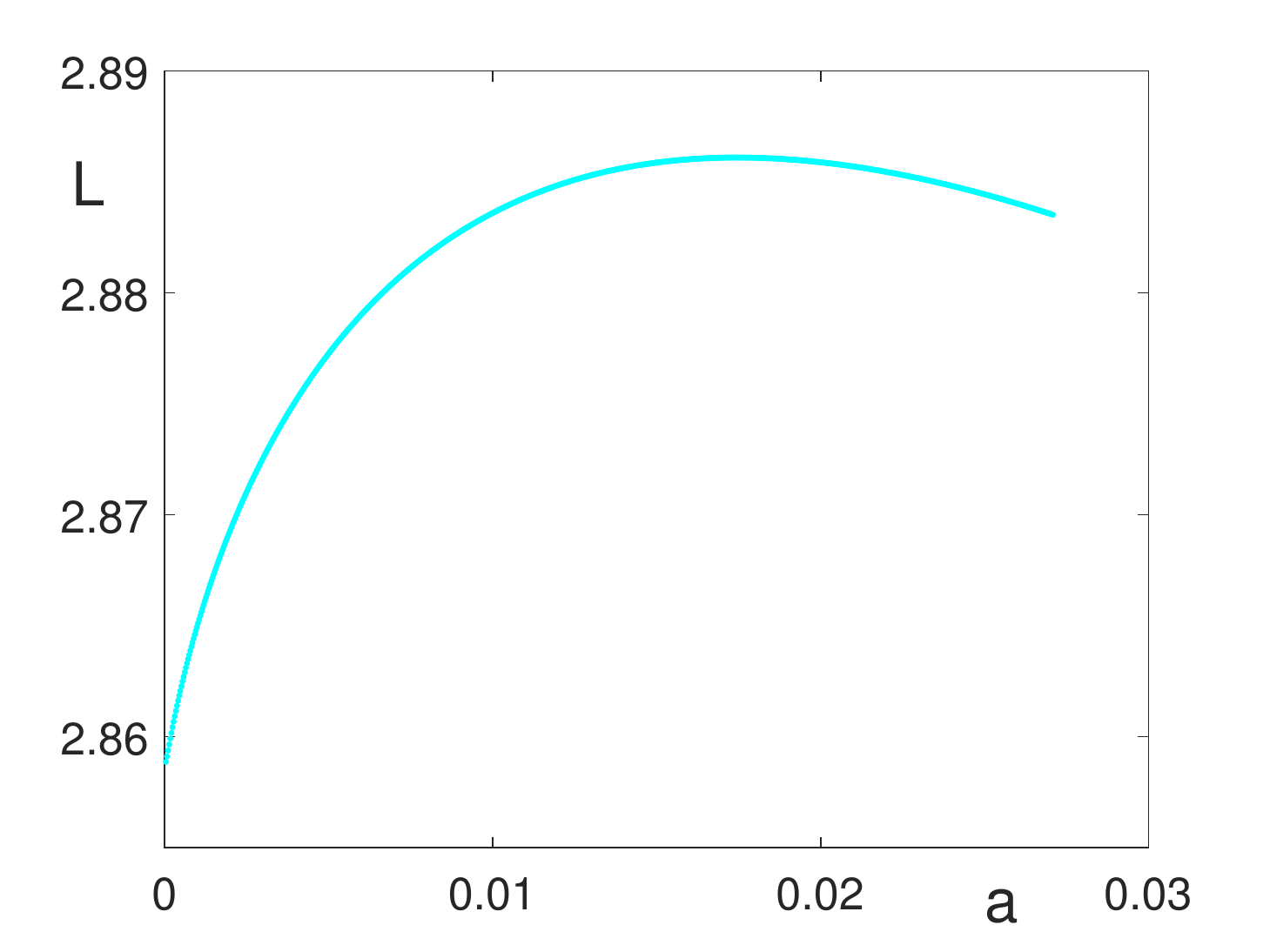} 
	\includegraphics[width=5cm,height=4cm]{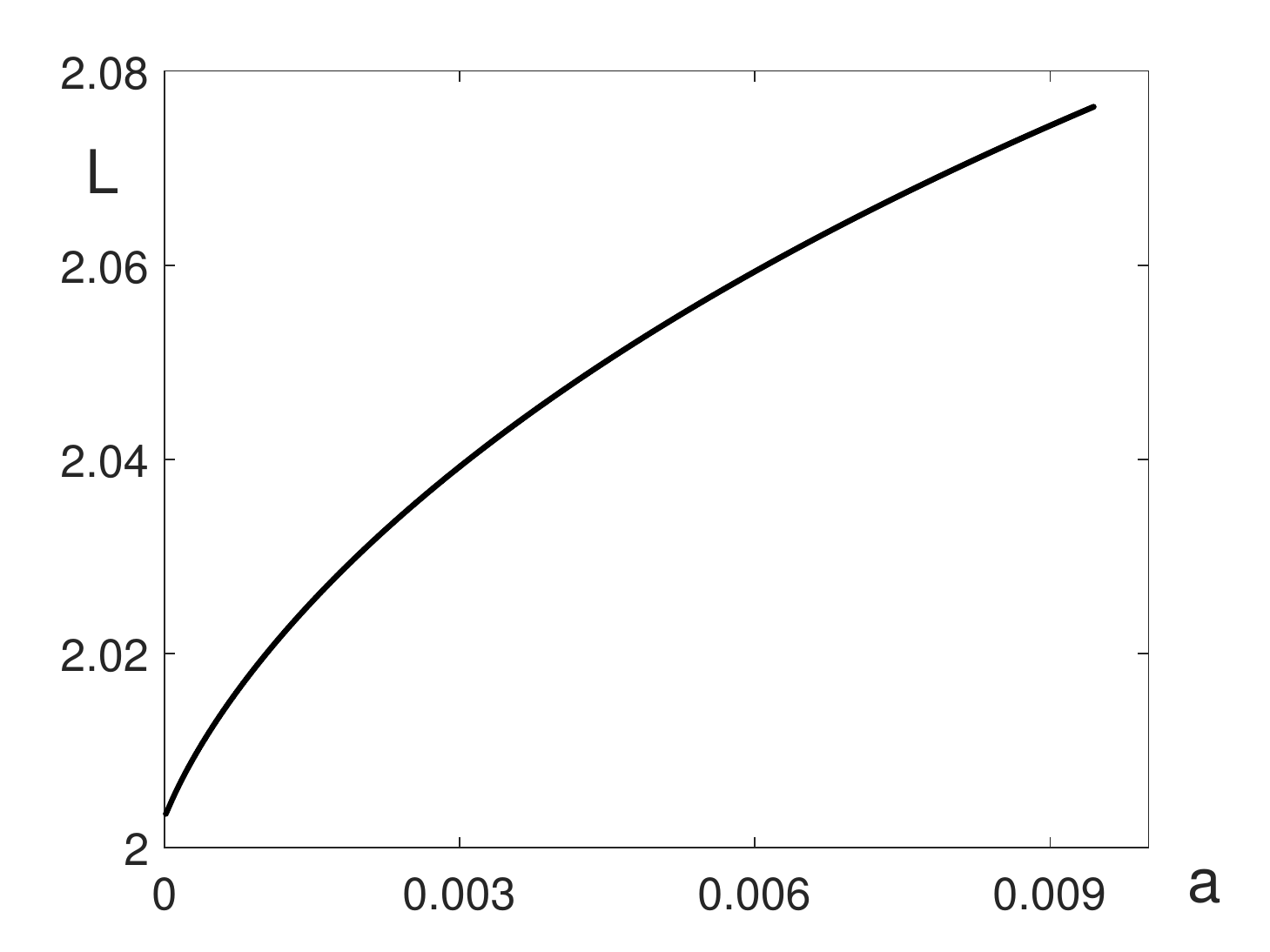} 
	\caption{The period function $\mathfrak{L}(a,b)$ versus $a$ for $c = 1$ and  three values of $b$: 
		$b = 0$ (left), $b = -0.2$ (middle), and $b = -0.4$ (right).}
	\label{fig-periodfunction}
\end{figure}

Figure \ref{fig-periodfunction} illustrates the result of Lemma \ref{lem-nonmonotonicity} for $c = 1$. The period function $\mathfrak{L}(a,b)$ is monotonically decreasing in $a$ for $b = 0$, is non-monotone in $a$ with a single maximum for $b = -0.2$, and is monotonically increasing in $a$ for $b = -0.4$. 
The range of $a$ values depends on the values of $b$ as is clear from Figure \ref{fig-domain}.  Note that the colors do not correspond to the colors of Figure \ref{fig-dependence}, where the values of $L = \mathfrak{L}(a,b)$ are fixed.

\section{Spectral properties of the Hessian operator $\mathcal{L}$}
\label{sec-3}

Here we shall consider the spectral properties of the Hessian operator $\mathcal{L}$ given by (\ref{hill}). 
Since $\mathcal{L}: L^2_{\rm per} \to L^2_{\rm per}$ is self-adjoint, its spectrum  $\sigma(\mathcal{L})$ consists of the absolutely continuous part, denoted by $\sigma_c(\mathcal{L})$, and the point spectrum, denoted by $\sigma_p(\mathcal{L})$. 
Since $c - \phi$ is a bounded multiplicative operator in $L^2_{\rm per}$
and $-3c (4 - \partial_x^2)^{-1}$ is a compact operator in $L^2_{\rm per}$, 
Kato's theorem \cite{Kato} implies that 
$$
\sigma_c(\mathcal{L}) = \sigma(c - \phi) = {\rm Range}(c-\phi)
$$
Since $c - \phi > 0$ by Lemma \ref{lem-trav}, there exists 
$$
\lambda_0 := c - \max\limits_{x \in \mathbb{T}_L} \phi(x) > 0
$$ 
such that $\sigma_p(\mathcal{L})$ admits finitely many eigenvalues of finite multiplicities below $\lambda_0$. 

The following lemma gives an efficient technique to count the negative and zero eigenvalues of $\mathcal{L}$. It is an analogue of the Birman--Schwinger principle used in quantum mechanics \cite[Section 5.6]{GS-book}. A similar criterion was developed in our previous work \cite{Geyer}.

\begin{lemma}
	\label{lem-BS} 
	For every $\lambda \in (-\infty,\lambda_0)$ with $\lambda_0>0$, let the Schr\"{o}dinger operator $\mathcal{K}(\lambda) : H^2_{\rm per} \subset L^2_{\rm per} \to L^2_{\rm per}$ be defined by 
\begin{equation}
\label{hill-M}
	\mathcal{K}(\lambda) := - \partial_x^2 + \frac{c - 4 \phi - 4 \lambda}{c - \phi - \lambda}. 
\end{equation}
	Then, we have 
	\begin{align}
%	\label{equality-1}
\#\{ \lambda < 0 : \;\; \mathcal{L} w = \lambda w, \;\; w \in L^2_{\rm per}\} 
%&= \#\{ \lambda < 0 : \;\; \mathcal{K}(\lambda) v = 0, \;\; v \in H^2_{\rm per}\} \\
&= \#\{ \mu < 0 : \;\; \mathcal{K}(0) v = \mu v, \;\; v \in H^2_{\rm per}\},
	\label{equality-2}
	\end{align}
where $\#\{ \cdot \}$ denotes the number of eigenvalues, taking into  account their multiplicities.
\end{lemma}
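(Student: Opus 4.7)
The plan is to establish a Birman--Schwinger style correspondence between the negative eigenvalues of $\mathcal{L}$ and those of $\mathcal{K}(0)$ via the family $\{\mathcal{K}(\lambda)\}_{\lambda < \lambda_0}$. The first step is to verify that for each $\lambda < \lambda_0$ the eigenvalue problems $\mathcal{L} w = \lambda w$ in $L^2_{\rm per}$ and $\mathcal{K}(\lambda) v = 0$ in $H^2_{\rm per}$ are equivalent. Starting from $(c - \phi - \lambda) w = 3 c (4-\partial_x^2)^{-1} w$, I set $v := (4-\partial_x^2)^{-1} w$, so that $w = 4v - v''$. Substituting and dividing by $(c - \phi - \lambda) > 0$ yields exactly $\mathcal{K}(\lambda) v = 0$. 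Since $(4-\partial_x^2): H^2_{\rm per} \to L^2_{\rm per}$ is an isomorphism, the map $v \mapsto (4-\partial_x^2) v$ identifies $\ker \mathcal{K}(\lambda)$ with $\ker (\mathcal{L} - \lambda I)$, so a number $\lambda < \lambda_0$ is an eigenvalue of $\mathcal{L}$ of multiplicity $m$ if and only if $0$ is an eigenvalue of $\mathcal{K}(\lambda)$ of multiplicity $m$.

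The second step is to show that the eigenvalues of $\mathcal{K}(\lambda)$ depend monotonically on $\lambda$. A direct computation gives
\begin{equation*}
\partial_\lambda \! \left( \frac{c - 4\phi - 4\lambda}{c - \phi - \lambda} \right) = -\frac{3c}{(c - \phi - \lambda)^2} < 0,
\end{equation*}
so the potential of $\mathcal{K}(\lambda)$ decreases pointwise and smoothly in $\lambda$ on $(-\infty, \lambda_0)$. Since $\mathcal{K}(\lambda)$ is a real self-adjoint Schr\"odinger operator with compact resolvent on $L^2_{\rm per}$, its eigenvalues $\{\mu_j(\lambda)\}_{j \geq 1}$, labelled in increasing order and counted with multiplicity, are continuous in $\lambda$, and the Hellmann--Feynman formula gives $\mu_j'(\lambda) < 0$ where the branch is simple. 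Consequently each branch is strictly decreasing on $(-\infty, \lambda_0)$.

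The third step is the count. As $\lambda \to -\infty$, the potential of $\mathcal{K}(\lambda)$ converges uniformly on $\mathbb{T}_L$ to the constant $4$, so $\mathcal{K}(\lambda) \to -\partial_x^2 + 4$ in norm resolvent sense, and the latter operator has spectrum contained in $[4, \infty)$. Hence the counting function $N(\lambda) := \#\{j : \mu_j(\lambda) < 0\}$ vanishes for $\lambda$ sufficiently negative. By the strict monotonicity from Step~2, $N$ is non-decreasing on $(-\infty, \lambda_0)$ and can only jump upward, by $\dim \ker \mathcal{K}(\lambda)$, at those $\lambda$ where $0 \in \sigma_p(\mathcal{K}(\lambda))$. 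By Step~1, these jumps occur precisely at the negative eigenvalues of $\mathcal{L}$ and have size equal to the algebraic multiplicity there. Summing the jumps over $(-\infty, 0)$ therefore yields $N(0) = \#\{\lambda < 0 : \lambda \in \sigma_p(\mathcal{L})\}$, counted with multiplicity, which is the identity (\ref{equality-2}). The main delicate point is the bookkeeping at possible branch crossings of the $\mu_j$, which I would circumvent by working with the rank of the spectral projector onto the negative subspace of $\mathcal{K}(\lambda)$ rather than with individually labelled branches, since this rank is integer-valued and piecewise constant with jumps controlled by $\dim \ker \mathcal{K}(\lambda)$.
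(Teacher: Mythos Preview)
Your proposal is correct and follows essentially the same Birman--Schwinger argument as the paper: the substitution $v=(4-\partial_x^2)^{-1}w$ to identify $\ker(\mathcal{L}-\lambda)$ with $\ker\mathcal{K}(\lambda)$, the monotonicity of the potential via $\partial_\lambda A=-3c/(c-\phi-\lambda)^2<0$, the limit $A\to 4$ as $\lambda\to-\infty$ forcing all eigenvalues positive, and then the zero-crossing count. Your extra care about branch crossings (phrasing the count via the rank of the negative spectral projector rather than individually labelled $\mu_j$) is a welcome refinement that the paper leaves implicit.
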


\begin{proof}
The spectral problem $\mathcal{L} w = \lambda w$ with $w \in L^2_{\rm per}$ 
can be rewritten in the variable $v := (4 - \partial_x^2)^{-1} w$ as the spectral problem $\mathcal{K}(\lambda) v = 0$ with $v \in H^2_{\rm per}$. Since the operator $4 - \partial_x^2$ is invertible with a bounded inverse in $L^2_{\rm per}$, the correspondence $v = (4 - \partial_x^2)^{-1} w$ implies that if $\lambda < 0$ is an eigenvalue of $\mathcal L$, then $\mathcal K(\lambda)$ admits a zero eigenvalue of the same multiplicity. Because of the compact embedding of $H^2_{\rm per}$ into $L^2_{\rm per}$, we have 
$$
\sigma(\mathcal{K}(\lambda)) = \sigma_p(\mathcal{K}(\lambda)) \quad 
\mbox{\rm for} \;\; \lambda \in (-\infty,\lambda_0),
$$ 
that is, the spectrum of $\mathcal{K}(\lambda)$ consists of eigenvalues as long as 
$$
A(x,\lambda) := \frac{c - 4 \phi(x) - 4 \lambda}{c - \phi(x) - \lambda}
$$
is bounded in $x$. Since
$$
\partial_{\lambda} A(x,\lambda) = -\frac{3c}{(c-\phi(x) - \lambda)^2} < 0,
$$
the eigenvalues of $\mathcal{K}(\lambda)$ are monotonically decreasing functions of $\lambda$. Since 
$$
\lim_{\lambda \to -\infty} A(x,\lambda) = 4,
$$
there exists $\lambda_{\infty} \in (-\infty,0)$ such that $A(x,\lambda)>0$ for all $x \in \mathbb{R}$ and $\lambda <\lambda_{\infty}$, and hence 
$\sigma_p(\mathcal{K}(\lambda)) > 0$ for $\lambda \in (-\infty,\lambda_{\infty})$.
Each eigenvalue of $K(\lambda)$, say $\mu(\lambda)$, is decreasing and positive for large negative $\lambda$, and therefore crosses the horizontal axis at most once in $(-\infty, 0)$. If there exists $\lambda \in (-\infty, 0)$ such that $\mu(\lambda)=0$, then $\mu(0)<0$, i.e. it corresponds to a negative eigenvalue of $K(0)$. Therefore, the number of negative eigenvalues of $K(0)$ equals the number of $\lambda$ for which $K(\lambda)v=0$. In view of the previous equality with the number of negative eigenvalues of $\mathcal{L}$, this proves the equality (\ref{equality-2}).
\end{proof}

\begin{remark}
	\label{remark-zero}
	Because $\mathcal{L} \phi' = 0$, we have $\mathcal{K}(0) \nu' = 0$ so that  $0$ is an eigenvalue of $\mathcal{K}(0)$.
\end{remark}

Figure \ref{fig-eigenvalues} illustrates the criterion in Lemma \ref{lem-BS} 
with numerical approximations of the eigenvalues of $\mathcal{K}(\lambda)$ in $L^2_{\rm per}$ versus $\lambda$ for two different values of $(a,b)$ with $c = 1$. The left panel 
corresponds to the choice $(a,b) = (0.04,0)$ above the curve $a = a_0(b)$ shown on Figure \ref{fig-domain}. 
Only the first eigenvalue of $\mathcal{K}(\lambda)$ crosses the zero level (dotted line) in $(-\infty,0)$, whereas 
the second eigenvalue crosses the zero level at $\lambda = 0$. 
The right panel corresponds to the choice 
$(a,b) = (0.001,-0.3)$ below the curve 
$a = a_0(b)$ shown on Figure \ref{fig-domain}. The first two eigenvalues of $\mathcal{K}(\lambda)$ cross the zero level in $(-\infty,0)$ and the third eigenvalue, which is close to the second eigenvalue, crosses the zero level at $\lambda = 0$. 
The zero eigenvalue of $\mathcal{K}(0)$ exists in both cases, in accordance with Remark \ref{remark-zero}.

\begin{figure}[htb!]
		\includegraphics[width=7cm,height=6cm]{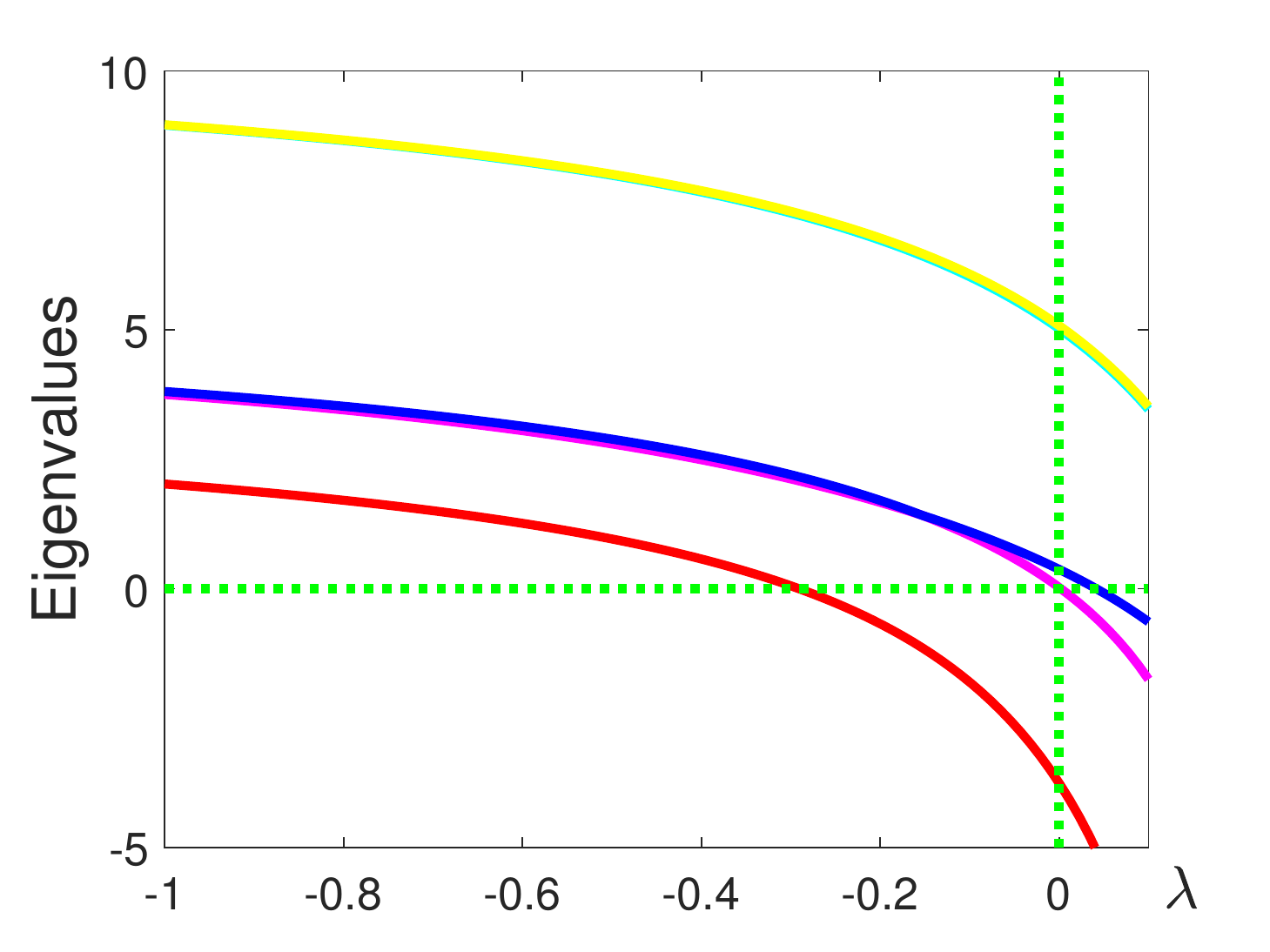}
		\includegraphics[width=7cm,height=6cm]{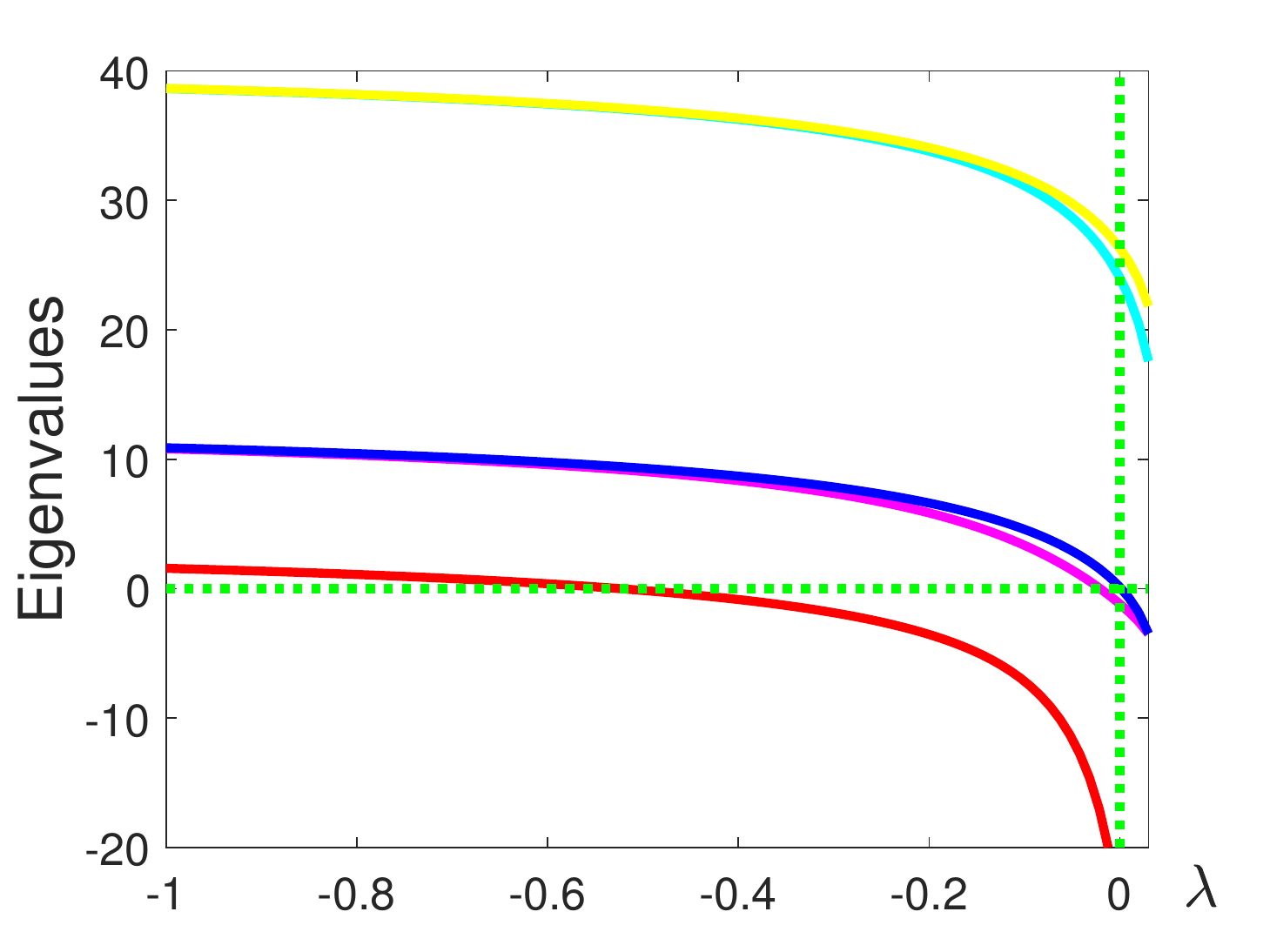}
	\caption{The lowest five eigenvalues of $\mathcal{K}(\lambda)$ versus $\lambda$ for $a = 0.04$, $b = 0$ (left) and $a = 0.001$, $b = -0.3$ (right) with $c = 1$. Eigenvalues are strictly decreasing in $\lambda$.} 
	\label{fig-eigenvalues}
\end{figure}

The next result uses the criterion in Lemma \ref{lem-BS} to relate the number of 
negative eigenvalues and the multiplicity of the zero eigenvalue of $\mathcal{L}$ 
with the period function $\mathfrak{L}(a,b)$ defined in (\ref{period-L-function}). Similar considerations can be found in \cite{GMNP}, \cite[Lemma 4.2]{J}, and \cite[Theorem 3.1]{Neves}.

\begin{lemma}
	\label{lem-Floquet}
	The linearized operator $\mathcal{L} : L^2_{\rm per} \to L^2_{\rm per}$ given by (\ref{hill}) admits
	\begin{itemize}
		\item two negative eigenvalues and a simple zero eigenvalue  if $\partial_a \mathfrak{L} > 0$;
		\item a simple negative eigenvalue and a double zero eigenvalue if $\partial_a \mathfrak{L} = 0$;
		\item a simple negative eigenvalue and a simple zero eigenvalue if $\partial_a \mathfrak{L} < 0$,
	\end{itemize}
	where $\mathfrak{L}(a,b)$ is given by (\ref{period-L-function}),
	and the rest of its spectrum in $L^2_{\rm per}$ is strictly positive. 
\end{lemma}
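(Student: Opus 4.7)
The plan is to translate the eigenvalue count to the Schrödinger operator $\mathcal K(0)$ via Lemma \ref{lem-BS}, then use Sturm--Floquet oscillation theory combined with parameter differentiation. Since $4-\partial_x^2$ is boundedly invertible on $L^2_{\rm per}$, the correspondence $v = (4-\partial_x^2)^{-1} w$ is a bijection preserving the multiplicity of the zero eigenvalue, so it suffices to work with $\mathcal K(0)$. By Remark \ref{remark-zero} and formula (\ref{v-phi}) with $d = b/4$, one finds $\nu' = \tfrac{1}{3c}(c-\phi)\phi'$, which (since $c-\phi > 0$) has exactly two simple zeros per period $L$, located at the two extrema of $\phi$. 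By the classical Sturm--Floquet theory for Hill operators on $L^2_{\rm per}$, the periodic eigenvalues $\mu_0^+ < \mu_1^+ \leq \mu_2^+ < \mu_3^+ \leq \cdots$ have eigenfunctions with respectively $0, 2, 2, 4, 4, \ldots$ zeros per period; hence $0 \in \{\mu_1^+, \mu_2^+\}$, which forces $\mu_0^+ < 0$ and $\mu_j^+ > 0$ for $j \geq 3$. Thus $\mathcal K(0)$ has one or two negative eigenvalues, and $0$ has multiplicity one or two.

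To distinguish these cases I construct a second solution of $\mathcal K(0)v = 0$ by differentiating the Euler--Lagrange equation (\ref{EL}) with respect to $a$. Since (\ref{EL}) does not depend explicitly on $a$, this yields $\mathcal{L}(\partial_a \phi) = 0$ as a pointwise identity on $\mathbb R$, which by the intertwining $\mathcal L (4-\partial_x^2) = (c-\phi)\mathcal K(0)$ translates to $\mathcal K(0) V = 0$ with $V := (4-\partial_x^2)^{-1}(\partial_a\phi)$. Differentiating the periodicity relation $\phi(x + \mathfrak L(a,b); a, b) = \phi(x; a, b)$ with respect to $a$ at fixed $x$ gives $\partial_a\phi(x + L) = \partial_a\phi(x) - \phi'(x)\,\partial_a\mathfrak L(a,b)$, and applying $(4-\partial_x^2)^{-1}$:
\[
V(x+L) \;=\; V(x) \;-\; \nu'(x)\,\partial_a\mathfrak L(a,b).
\]
Moreover $\nu'$ and $V$ are linearly independent as solutions of the second-order ODE $\mathcal K(0) v = 0$, since $\partial_a\phi$ is not a multiple of $\phi'$ (varying $a$ moves the turning points $\phi_\pm$, not merely the phase of the orbit).

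When $\partial_a\mathfrak L = 0$, the function $V$ is itself $L$-periodic, so $0$ has geometric multiplicity two in $\mathcal K(0)$, which forces $\mu_1^+ = \mu_2^+ = 0$ and leaves the unique negative eigenvalue $\mu_0^+$. When $\partial_a\mathfrak L \neq 0$, $V$ is not $L$-periodic, so $0$ is a simple eigenvalue and exactly one of $\mu_1^+, \mu_2^+$ vanishes. The main obstacle is the sign matching in this simple case: whether $\partial_a\mathfrak L > 0$ corresponds to $\mu_2^+ = 0$ (two negatives) or to $\mu_1^+ = 0$ (one negative). My approach is to analyze the Floquet discriminant $\Delta(\mu)$ of $-\partial_x^2 + q - \mu$ near $\mu = 0$: with Wronskian normalization $W(\nu', V) = 1$, the monodromy matrix at $\mu = 0$ is a unipotent Jordan block whose off-diagonal entry is proportional to $-\partial_a\mathfrak L$, and a standard perturbative identity expresses $\Delta'(0)$ as a definite multiple of $-\int_0^L \nu'(x)V(x)\,dx$. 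Combined with an integration-by-parts identity of the form $\int_0^L\bigl[(V')^2 + q V^2\bigr]\,dx = -\,\partial_a\mathfrak L\,/\,\nu''(x_0)^2$ evaluated at a zero $x_0$ of $\nu'$, this fixes the sign of $\Delta'(0)$, which in turn determines whether $\mu = 0$ is the upper band edge ($\mu_1^+ = 0$) or the lower band edge of the next band ($\mu_2^+ = 0$) of the gap around $0$. As an independent cross-check, the sign can be computed explicitly in a neighborhood of the boundary $a = a_-(b)$ (where the wave is nearly constant and a Stokes expansion applies, cf.\ Lemma \ref{lem-Stokes}), and continuity of the spectrum across the curve $a = a_0(b)$, where $\partial_a\mathfrak L$ changes sign by Lemma \ref{lem-nonmonotonicity}, then extends the sign matching to the entire interior.
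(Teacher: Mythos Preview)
Your overall strategy coincides with the paper's: reduce to $\mathcal K(0)$ via Lemma~\ref{lem-BS}, use the nodal count of $\nu'$ to place $0\in\{\mu_1^+,\mu_2^+\}$, and construct a second solution of $\mathcal K(0)v=0$ by differentiating in $a$. The construction of that second solution is slightly off: $(4-\partial_x^2)^{-1}$ is only defined on $L$-periodic functions, while $\partial_a\phi$ is not $L$-periodic (the period itself moves with $a$). The paper sidesteps this by working directly with $\partial_a\nu$, which is well defined on $\mathbb R$ through the purely local identity~(\ref{v-phi}), and verifies $\mathcal K(0)\partial_a\nu=0$ by differentiating~(\ref{nu-phi-equation}). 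This is easy to repair in your argument.

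The genuine gap is in the sign-matching step. The formula you quote for $\Delta'(0)$ is incorrect: a standard variation-of-parameters computation with the fundamental pair $(y_1,y_2)$ normalized as in~(\ref{fund-set}), where $y_2\propto\nu'$ is the periodic solution, gives
\[
\Delta'(0)\;=\;y_1'(L)\int_0^L y_2(x)^2\,dx,
\]
so $\Delta'(0)$ is a positive multiple of the off-diagonal monodromy entry $\theta=y_1'(L)$, not of $-\int \nu' V$. Your integration-by-parts identity is also not usable: integrating $\int_0^L[(V')^2+qV^2]\,dx$ with base point at a turning point yields the boundary term $-V(0)\,\nu''(0)\,\partial_a\mathfrak L$, not $-\partial_a\mathfrak L/\nu''(x_0)^2$, and in any case the left-hand side carries no sign information because $q=(c-4\phi)/(c-\phi)$ changes sign on the period. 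The paper instead computes $\theta$ directly: placing $x=0$ at the maximum of $\phi$, one differentiates the identity $\nu'(\mathfrak L(a,b))=0$ in $a$ and combines it with the turning-point relation~(\ref{roots-turning-points}) and the algebraic formula~(\ref{v-phi}) to obtain
\[
\theta \;=\; y_1'(L)\;=\;-\,\frac{\partial_a\mathfrak L}{2(c-\phi_+)^2(\partial_a\phi_+)^2},
\]
whose sign is manifestly opposite to $\partial_a\mathfrak L$. Your proposed cross-check via the Stokes expansion at $a_-(b)$ plus continuity is a legitimate alternative route, but it is not free: the double eigenvalue at the constant state is unsplit at first order (the relevant matrix elements of the $O(A)$ perturbation of $q$ against $\cos(\omega x),\sin(\omega x)$ all vanish), so a second-order degenerate perturbation computation is required, and since the curve $a=a_0(b)$ meets the boundary $a=a_-(b)$ at $b=-\tfrac{2}{9}c^2$, you must carry out the check on \emph{both} sides of that point to cover both connected components.
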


\begin{proof}
	By Lemma \ref{lem-BS}, we need to control the negative and zero eigenvalues 
	of the linear operator $\mathcal{K}(0) : H^2_{\rm per} \subset L^2_{\rm per} \to L^2_{\rm per}$ given by (\ref{hill-M}). Using the change of variables $w= (4 - \partial_x^2)v$, the second-order differential equation $\mathcal{K}(0) v = 0$ can be written as
	$$
	c(v - v'') - \phi w = 0.
	$$ 
	This equation has the two solutions $v_1 = \nu'$ and $v_2 = \partial_a \nu$, which follows by differentiating (\ref{nu-phi-equation}) in $x$ and $a$ since $c$ and $d = b/4$ are independent of $x$ and $a$.
	
	Let $\{ y_1,y_2\}$ be the fundamental set of solutions associated to the equation $\mathcal{K}(0) v=0$ in $H^2(0,L)$ such that 
	\begin{equation}
	\label{fund-set}
	\left\{ \begin{array}{l} y_1(0) = 1, \\
	y_1'(0) = 0, \end{array} \right. \qquad 
	\left\{ \begin{array}{l} y_2(0) = 0. \\
	y_2'(0) = 1, \end{array} \right.
	\end{equation}
We set $\phi(0) = \phi(L) = \phi_+$ and $\phi'(0) = \phi'(L) = 0$, 
where $\phi_+$ is the turning point for the maximum of $\phi$ in $x$ 
satisfying the equation 
\begin{equation}
\label{roots-turning-points}
(c - \phi_{\pm})^2 (b + \phi_{\pm}^2) = a,
\end{equation}
in view of the first-order invariant \eqref{quadra}. 
It follows from (\ref{v-phi}) that we can define 
\begin{equation}
\label{nu-turning-points}
\nu_{\pm} := \frac{1}{3} \phi_{\pm} - \frac{1}{6c} \phi_{\pm}^2 - \frac{b}{12c}
\end{equation}
as the corresponding turning points of $\nu = (4 - \partial_x^2)^{-1} \phi$. 
We compute from (\ref{nu-phi-equation}) and (\ref{nu-turning-points}) that 
$$
\nu''(0) = \frac{1}{3c} (c \phi_+ - 2 \phi_+^2 - b) = \frac{1}{3c} (c- \phi_+) \phi''(0)
$$
and 
$$
\partial_a \nu_+ = \frac{1}{3c} (c - \phi_+) \partial_a \phi_+,
$$
which are both nonzero since $c - \phi_+ > 0$, 
$\partial_a \phi_+ \neq 0$, and $\phi''(0) \neq 0$. Moreover, 
differentiating  (\ref{roots-turning-points}) in $a$ yields 
$$
2 (c-\phi_+) (c \phi_+- 2 \phi_+^2 - b) \partial_a \phi_+ = 1,
$$ 
from which we obtain
$$
\phi''(0) \partial_a \phi_+ =  \frac{1}{2(c-\phi_+)^2} > 0.
$$
Due to the normalization (\ref{fund-set}), we can then define
	\begin{equation*}
	y_1(x) := \frac{\partial_a \nu(x)}{\partial_a \nu_+}, \quad 
	y_2(x) := \frac{\nu'(x)}{\nu''(0)},
	\end{equation*}
and obtain $y_1(L) = y_1(0) = 1$, $y_1'(0)=0$, and 
	$$
	y_1'(L) = -\frac{\partial_a \mathfrak{L}}{\partial_a \nu_+} \nu''(0) = 
	-\frac{\partial_a \mathfrak{L}}{\partial_a \phi_+} \phi''(0) = 
		-\frac{\partial_a \mathfrak{L}}{2 (c-\phi_+)^2 (\partial_a \phi_+)^2},
	$$
where we have differentiated $\nu'(L)=0$ with respect to $a$ and used that $L = \mathcal L(a,b)$. On the other hand, $y_2(L) = y_2(0) = 0$ and $y_2'(L) = y_2'(0) = 1$. If we denote $\theta := y_1'(L)$, then $y_1(x+L) = y_1(x) + \theta y_2(x)$. Since the sign of $\theta$ is opposite to that of $\partial_a \mathfrak{L}$, the assertion follows from \cite[Proposition 1]{GMNP}.
\end{proof}

\begin{remark}
	In the case of  smooth solitary waves on a constant background, 
	 the Schr\"{o}dinger operator operator $\mathcal{K}(\lambda) : H^2(\mathbb{R}) \subset L^2(\mathbb{R}) \to L^2(\mathbb{R})$ for $\lambda \in (-\infty,\lambda_0)$ admits a finite number of simple isolated eigenvalues and an absolutely continuous spectrum 	located in $[\mu_{\infty},\infty)$, where $\mu_{\infty} := \lim\limits_{|x| \to \infty} A(x,\lambda)$. Since $\phi(x) \to \phi_1$ as $|x| \to \infty$ on the top boundary of the region of Lemma \ref{lem-trav}, we have 
	$$
	\lim_{|x| \to \infty} A(x,0) = \frac{c - 4 \phi_1}{c - \phi_1} > 0
	$$
	and $\mathcal{K}(0) \nu' = 0$ with $\nu' \in H^2(\mathbb{R})$. By Sturm's nodal theorem, we have 
	$$
	\#\{ \mu < 0 : \;\; \mathcal{K}(0) v = \mu v, \;\; v \in H^2(\mathbb{R}) \} = 1
	$$ 
	so that 
	$$
	\#\{ \lambda < 0 : \;\; \mathcal{L} w = \lambda w, \;\; w \in L^2(\mathbb{R})\} = 1
	$$ 
	by the criterion in Lemma \ref{lem-BS}. Thus, $\mathcal{L}: L^2(\mathbb{R}) \to L^2(\mathbb{R})$ has a simple negative eigenvalue and a simple zero eigenvalue in the case of  smooth solitary waves. This yields 
	a much simpler argument compared to the theory developed in \cite{Liu-21}. 
\end{remark}

We use the monotonicity properties of the period function in Lemma \ref{lem-nonmonotonicity} and the criterion in Lemma \ref{lem-Floquet} in order to prove the last assertion of Theorem \ref{theorem-existence} 
stated as the following corollary.

\begin{corollary}
	\label{lem-degenerate}
	For a fixed $c > 0$, there exists a smooth curve $a = a_0(b)$ 
	for $b \in (-\frac{2}{9}c^2,0)$ inside the existence region of smooth periodic waves in Lemma \ref{lem-trav} such that the linear operator $\mathcal{L}$ in $L^2_{\rm per}$ has only one simple negative eigenvalue above the curve and two simple negative eigenvalues (or a double negative eigenvalue) below the curve, the rest of its spectrum  for $a \neq a_0(b)$ includes a simple zero eigenvalue and a strictly positive spectrum bounded away from zero. Along the curve $a = a_0(b)$,  the linear operator $\mathcal{L}$ in $L^2_{\rm per}$ has only one simple negative eigenvalue, a double zero eigenvalue, and the rest of its spectrum is strictly positive. 
\end{corollary}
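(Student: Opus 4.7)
The plan is to identify the curve $a = a_0(b)$ with the locus of critical points of the period function $\mathfrak{L}(a,b)$ with respect to $a$ for fixed $b$, and then to read off the spectral structure of $\mathcal{L}$ directly from Lemma \ref{lem-Floquet}. Fix $b \in (-\frac{2}{9}c^2, 0)$. The second assertion of Lemma \ref{lem-nonmonotonicity} gives a unique critical point $a = a_0(b)$ of $a \mapsto \mathfrak{L}(a,b)$, which is a strict maximum, so that $\partial_a \mathfrak{L}(a,b) > 0$ for $a < a_0(b)$ and $\partial_a \mathfrak{L}(a,b) < 0$ for $a > a_0(b)$. Smoothness of $b \mapsto a_0(b)$ will follow from the implicit function theorem applied to the equation $\partial_a \mathfrak{L}(a,b) = 0$, provided one can establish non-degeneracy $\partial_a^2 \mathfrak{L}(a_0(b), b) \neq 0$. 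I would extract this from the argument of Lemma \ref{theta<1/2}: there the resultant-based counting shows that the polynomial $R$ has a simple zero on $(0,\theta)$, which under the change of variables $(a,b) \leftrightarrow (h,\theta)$ used in the proof of Lemma \ref{lem-nonmonotonicity} translates into a simple zero of $\ell'(h)$, hence into $\partial_a^2 \mathfrak{L}(a_0(b), b) \neq 0$.

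With the sign of $\partial_a \mathfrak{L}(a,b)$ now known for $a$ on, above, and below the curve, the eigenvalue count for $\mathcal{L}$ follows directly from Lemma \ref{lem-Floquet}. Along the curve $a = a_0(b)$, where $\partial_a \mathfrak{L} = 0$, the operator $\mathcal{L}$ has one simple negative eigenvalue and a double zero eigenvalue. Strictly above the curve, $\partial_a \mathfrak{L} < 0$ yields one simple negative eigenvalue and a simple zero eigenvalue. Strictly below the curve, $\partial_a \mathfrak{L} > 0$ yields two negative eigenvalues (simple, with the possibility of merging as a double negative eigenvalue on isolated curves) together with a simple zero eigenvalue.

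To conclude, I would recall from the discussion preceding Lemma \ref{lem-BS} that $\sigma_c(\mathcal{L}) = \mathrm{Range}(c - \phi) \subset [\lambda_0, \infty)$ with $\lambda_0 = c - \max_{x \in \mathbb{T}_L} \phi(x) > 0$, and that $\mathcal{L}$ admits only finitely many eigenvalues of finite multiplicity below $\lambda_0$. Since every non-positive eigenvalue has already been accounted for via Lemma \ref{lem-Floquet}, the remainder of $\sigma(\mathcal{L})$ is contained in a set bounded below by a positive constant. The main obstacle I anticipate is the rigorous verification of the non-degeneracy $\partial_a^2 \mathfrak{L}(a_0(b), b) \neq 0$ required for smoothness of $a_0(b)$: the resultant computations in Lemma \ref{theta<1/2} are designed to count critical points rather than to control their multiplicities, so some additional bookkeeping is needed to confirm that the simple zero of $R$ produces a non-degenerate maximum of $\ell(h)$, and therefore of $\mathfrak{L}(a,b)$ in $a$.
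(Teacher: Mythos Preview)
Your approach is essentially the same as the paper's: identify $a_0(b)$ as the unique critical point of $a\mapsto\mathfrak{L}(a,b)$ from Lemma~\ref{lem-nonmonotonicity}, and then read off the eigenvalue counts from Lemma~\ref{lem-Floquet} according to the sign of $\partial_a\mathfrak{L}$. The paper's own proof does exactly this, in fewer words.

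One point you should patch: you fix $b\in(-\frac{2}{9}c^2,0)$ at the outset and never leave that range, but the corollary's assertion about ``above'' and ``below'' the curve refers to the entire existence region. The paper handles this by invoking all three cases of Lemma~\ref{lem-nonmonotonicity}: for $b\in(-c^2,-\frac{2}{9}c^2]$ one has $\partial_a\mathfrak{L}>0$ everywhere (hence $n(\mathcal{L})=2$, below the curve), for $b\in[0,\frac{1}{8}c^2)$ one has $\partial_a\mathfrak{L}<0$ everywhere (hence $n(\mathcal{L})=1$, above the curve), and only for $b\in(-\frac{2}{9}c^2,0)$ does the sign change across $a_0(b)$. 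You should add a sentence invoking the first and third bullets of Lemma~\ref{lem-nonmonotonicity} so that the regions ``above'' and ``below'' are fully covered.

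Your concern about smoothness of $b\mapsto a_0(b)$ is well placed and, in fact, goes beyond what the paper's proof supplies: the paper simply asserts the curve and does not verify the non-degeneracy $\partial_a^2\mathfrak{L}(a_0(b),b)\neq 0$ needed for the implicit function theorem. Your idea of extracting this from the simplicity of the zero of $R$ in Lemma~\ref{theta<1/2} is plausible but, as you correctly note, the resultant arguments there count zeros rather than certify their multiplicity, so additional work would be required. For the purpose of matching the paper's proof, you may omit this; but be aware that the smoothness claim in the statement is not fully justified in the paper either.
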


\begin{proof}
	Let $n(\mathcal{L})$ denote the number of negative eigenvalues of $\mathcal{L}$, taking into account their multiplicities. By Lemma \ref{lem-nonmonotonicity}, $\partial_a \mathfrak{L} > 0$ for every $a$ if $b \in (-c^2,-\frac{2}{9}c^2]$ so that $n(\mathcal{L}) = 2$ by Lemma \ref{lem-Floquet}. 
	Similarly, $\partial_a \mathfrak{L} < 0$ for every $a$ if $b \in [0,\frac{1}{8} c^2)$ so that $n(\mathcal{L}) = 1$. 
	For $b \in (-\frac{2}{9}c^2,0)$, there exists exactly one $a = a_0(b)$ for which the mapping $a \mapsto \mathcal{L}(a,b)$ has the maximum point.
	This curve is shown on Figure \ref{fig-domain}. Hence, 
	$\partial_a \mathfrak{L}> 0$ for $a < a_0(b)$ with $n(\mathcal{L}) = 2$ 
	and $\partial_a \mathfrak{L}< 0$ for $a > a_0(b)$ with $n(\mathcal{L}) = 1$. Combining the results in these three regions, we conclude that $n(\mathcal{L}) = 1$ above the curve and $n(\mathcal{L}) = 2$ below the curve inside the existence region. Along the curve $a = a_0(b)$, $\partial_a \mathfrak{L} = 0$ so that $n(\mathcal{L}) = 1$ and the zero eigenvalue of $\mathcal{L}$ is double.
\end{proof}

\begin{proof}
	The proof of Theorem \ref{theorem-existence} is complete with the results of Lemma \ref{lem-trav} Lemma \ref{theorem-increasing}, and Corollary \ref{lem-degenerate}.
\end{proof}

%%%%%%%%%%%%%%%%%%%%%%%%%%%%%%%%%%%%%%%%%%%%%%%%%%%%%%%%%%%%%%%%%%%%%%%%%%%%%%%%%%%%%%%%%%%%%%%%%%%%%%%%%%%%%%%%%%

\section{Energy stability criterion}
\label{sec-5}

To study the stability of the smooth periodic traveling waves with the profile $\phi$ with respect to co-periodic perturbations, we consider the decomposition $$
u(t,x) = \phi(x-ct) + w(t,x-ct).
$$ 
When this is substituted into the DP equation (\ref{DP}) and quadratic terms in $w$ are neglected, we obtain the linearized equation in the form 
$$
w_t - w_{txx} - c w_x + c w_{xxx} + 4 \phi w_x + 4 w \phi' = 
3 \phi' w_{xx} + 3 w_x \phi'' + \phi w_{xxx} + w \phi''',
$$
where $x$ stands for the traveling wave coordinate $x - ct$. 
The linearized equation can be written in the Hamiltonian form 
\begin{equation}
\label{DPlin}
w_t = - J \mathcal{L} w,
\end{equation}
where $J$ is the same as in (\ref{sympl-1}) and $\mathcal{L}$ is the same as in (\ref{hill}). Indeed, the equivalence of the linearized equations follows from 
the relation 
\begin{align*}
& \partial_x (4 - \partial_x^2) \left[ (c-\phi) w - 3 c (4 - \partial_x^2)^{-1} w \right] \\
&= (c - 4 \phi) w_x - (c-\phi) w_{xxx} - 4 \phi' w + 3 \phi' w_{xx} + 3 \phi'' w_x + \phi''' w.
\end{align*}
Linearization of the mass and energy functionals (\ref{Mu}) and (\ref{Fu}) 
at the traveling wave with the profile $\phi$ by using the co-periodic perturbation with the profile $w$ yields the constrained subspace of $L^2_{\rm per}$ of the form
\begin{equation}
\label{orth-cond}
X_0 := \left\{ w \in L^2_{\rm per} : \quad 
\langle 1, w \rangle = 0, \quad \langle \phi^2, w \rangle = 0 \right\}.
\end{equation} 
The following lemma shows that the two constraints are invariant in the time evolution of the linearized equation (\ref{DPlin}). 

\begin{lemma}
	\label{lem-constraints}
	Let $w \in C(\mathbb{R},H^s_{\rm per}) \cap C^1(\mathbb{R},H^{s-1}_{\rm per})$ be the global solution to the linearized equation (\ref{DPlin}) with  $s > \frac{3}{2}$ for  initial data $w(0,\cdot) = w_0 \in H^s_{\rm per}$. If $w_0 \in X_0$, then $w(t,\cdot) \in X_0$ for every $t \in \mathbb{R}$.
\end{lemma}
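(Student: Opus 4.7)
The plan is to differentiate each of the two pairings defining $X_0$ in time, substitute the linearized equation $w_t = -J\mathcal{L}w$, and verify that each derivative vanishes by using the skew-adjointness of $J$, the self-adjointness of $\mathcal{L}$, and two structural identities about the profile $\phi$. The regularity assumption $w \in C(\mathbb{R},H^s_{\rm per})\cap C^1(\mathbb{R},H^{s-1}_{\rm per})$ with $s > 3/2$ makes all the $L^2$ pairings well-defined and differentiable in $t$, so the formal manipulations below are rigorous.

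For the mass constraint, I would write
\begin{equation*}
\frac{d}{dt}\langle 1, w\rangle = -\langle 1, J\mathcal{L}w\rangle = \langle J\cdot 1, \mathcal{L}w\rangle,
\end{equation*}
where I used $J^* = -J$ on $L^2_{\rm per}$. Since $J = -(1-\partial_x^2)^{-1}(4-\partial_x^2)\partial_x$ applies $\partial_x$ to the right on a constant, $J\cdot 1 = 0$, and the pairing vanishes.

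For the second constraint, the decisive observation is that the profile $\phi$ satisfies the Euler--Lagrange equation \eqref{EL} from Lemma \ref{lem-var}, which I rewrite as
\begin{equation*}
\phi^2 = 2c(1-\partial_x^2)(4-\partial_x^2)^{-1}\phi - \tfrac{b}{2}.
\end{equation*}
Applying $J$ to both sides, using $J\cdot 1 = 0$ and the fact that the Fourier multipliers $(1-\partial_x^2)$, $(4-\partial_x^2)^{-1}$, and $\partial_x$ commute, I get the clean identity
\begin{equation*}
J\phi^2 = -2c\,(1-\partial_x^2)^{-1}(4-\partial_x^2)\partial_x(1-\partial_x^2)(4-\partial_x^2)^{-1}\phi = -2c\,\phi'.
\end{equation*}
Then, exactly as for the first constraint,
\begin{equation*}
\frac{d}{dt}\langle \phi^2, w\rangle = \langle J\phi^2, \mathcal{L}w\rangle = -2c\langle \phi', \mathcal{L}w\rangle = -2c\langle \mathcal{L}\phi', w\rangle = 0
\end{equation*}
by the self-adjointness of $\mathcal{L}$ and the identity $\mathcal{L}\phi' = 0$ recorded in the Remark after Lemma \ref{lem-var} (equivalently, translation invariance of the traveling wave).

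There is no real obstacle here; the only point worth highlighting is that the identity $J\phi^2 = -2c\phi'$ is not obvious at a glance but is forced by the variational characterization of $\phi$: it simply says that $\phi$, viewed as a stationary traveling wave, solves the Hamiltonian ODE $u_t = J\,\delta F/\delta u$ in the traveling frame, so that $J(\tfrac{1}{2}\phi^2) = -c\phi'$. Once this is noted, both invariances follow in one line each.
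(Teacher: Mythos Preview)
Your proof is correct and follows essentially the same approach as the paper: both argue via the skew-adjointness of $J$ with $J1=0$ for the first constraint, and via $J\phi^2=-2c\phi'$ together with $\mathcal{L}^*=\mathcal{L}$ and $\mathcal{L}\phi'=0$ for the second. The only difference is that you spell out why $J\phi^2=-2c\phi'$ holds (from the Euler--Lagrange equation \eqref{EL}), whereas the paper simply asserts this identity.
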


\begin{proof}
Since $J$ is skew-adjoint  and $J 1 = 0$, we obtain
$$
\frac{d}{dt} \langle 1, w \rangle = -\langle 1, J \mathcal{L}w \rangle = \langle J 1, \mathcal{L} w \rangle = 0.
$$
Similarly, since $\mathcal{L}^* = \mathcal{L}$, $J \phi^2 = -2c \phi'$, and $\mathcal{L}\phi' = 0$, we obtain
$$
\frac{d}{dt} \langle \phi^2, w \rangle = -\langle \phi^2, J \mathcal{L}w \rangle = \langle J \phi^2, \mathcal{L} w \rangle = -2c \langle \phi', \mathcal{L}w \rangle = -2c \langle \mathcal{L}\phi', w \rangle = 0.
$$
It follows from the invariance of the two constraints under the time evolution 
of the linearized equation (\ref{DPlin}) that if $w_0 \in X_0$, then $w(t,\cdot) \in X_0$ for every $t \in \mathbb{R}$.
\end{proof}

Formal differentiation of the second-order equation (\ref{nu-phi-equation}) 
with $d = b/4$ in  $b$ and $c$ yields
\begin{eqnarray}
\label{der-phi}
\mathcal{L} \partial_b \phi = \frac{1}{4}, \quad 
\mathcal{L}\partial_c \phi = -\frac{b}{4c} - \frac{\phi^2}{2c}.
\end{eqnarray}
The relations (\ref{der-phi}) allow us to characterize $1 \in {\rm Range}(\mathcal{L})$ 
and $\phi^2 \in {\rm Range}(\mathcal{L})$ in $L^2_{\rm per}$ provided that we can take derivatives in $b$ and $c$ of the family of periodic waves with the profile $\phi \in H^{\infty}_{\rm per}$ along a curve with fixed period $L = \mathcal{L}(a,b)$. 

The following lemma uses the fact that the period function is monotone in $b$, see Lemma \ref{theorem-increasing}, to guarantee the existence  of a unique curve in the $(a,b)$ parameter space for which solutions $\phi$ have a fixed period $L$ for every $L \in (0,\infty)$. 

\begin{lemma}
	\label{lemma-fixed-period}
	Fix $c > 0$ and $L > 0$. There exists a $C^1$ mapping $a \mapsto b = \mathcal{B}_L(a)$ for $a \in (0,a_L)$ with  some $a_L \in (0,\frac{27}{256} c^4)$ and a $C^1$ mapping $a \mapsto \phi = \Phi_L(\cdot,a) \in H^{\infty}_{\rm per}$ of smooth $L$-periodic solutions along the curve $b = \mathcal{B}_L(a)$.
\end{lemma}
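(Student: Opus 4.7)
The plan is to apply the implicit function theorem to the equation $\mathfrak{L}(a,b) = L$, solving for $b$ as a function of $a$, using the strict monotonicity $\partial_b \mathfrak{L}(a,b) > 0$ supplied by Lemma \ref{theorem-increasing}.

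First I would identify the range of the map $b \mapsto \mathfrak{L}(a,b)$ for fixed $a \in (0, a_c)$, where $a_c = \frac{27}{256} c^4$. By the proof of Lemma \ref{lem-trav}, $b$ runs over the open interval $(b_-(a), b_+(a))$ with $b_{\pm}$ parameterized by the critical points of $U$. As $b \to b_+(a)^-$ the periodic orbit degenerates into a homoclinic orbit at the saddle $(\phi_1,0)$, so the period diverges to $+\infty$ by the classical saddle-time estimate. As $b \to b_-(a)^+$ the orbit contracts to the nondegenerate center $(\phi_2,0)$ and $\mathfrak{L}(a,b)$ converges to the linearized period
\begin{equation*}
L_-(a) := \frac{2\pi}{\sqrt{U''(\phi_2)/2}},
\end{equation*}
where $U''(\phi_2) = -2 + 6\phi_2/(c-\phi_2)$ and $a = \phi_2 (c - \phi_2)^3$ along this boundary. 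Since $\phi_2 \mapsto a$ is a smooth bijection from $(c/4, c)$ onto $(0, a_c)$, the map $L_-$ is continuous, tends to $0$ as $a \to 0^+$ and to $+\infty$ as $a \to a_c^-$. Hence for the prescribed $L > 0$ one may define $a_L$ to be the smallest value in $(0, a_c)$ with $L_-(a_L) = L$, so that $L_-(a) < L$ for every $a \in (0, a_L)$ and $L$ lies in the range of $\mathfrak{L}(a, \cdot)$.

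For each such $a$, strict monotonicity of $\mathfrak{L}(a, \cdot)$ forces the existence of a unique $b = \mathcal{B}_L(a) \in (b_-(a), b_+(a))$ solving $\mathfrak{L}(a,b) = L$. Since $\mathfrak{L}$ is smooth in $(a,b)$ on the open existence region, as a standard Abelian integral with square-root turning points, and $\partial_b \mathfrak{L} > 0$ there, the implicit function theorem yields $\mathcal{B}_L \in C^1((0, a_L))$. The smooth dependence of the profile $\phi(\cdot; a, b)$ on $(a,b)$ provided by Lemma \ref{lem-trav}, together with the chain rule, then gives a $C^1$ map $a \mapsto \Phi_L(\cdot, a)$, and $\Phi_L(\cdot, a) \in H^\infty_{\rm per}$ because every periodic orbit of the autonomous equation (\ref{second-order}) with $c - \phi > 0$ is $C^\infty$ in $x$.

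The only nontrivial technical step is the boundary analysis identifying the range of $\mathfrak{L}(a,\cdot)$, most notably the formula for $L_-(a)$ and its asymptotics as $a \to 0^+$ and $a \to a_c^-$. This reduces to classical period-function limits near a nondegenerate center and a hyperbolic saddle, so the main effort is simply verifying that the center at $\phi_2$ is nondegenerate throughout the interior of the existence region, equivalently $U''(\phi_2) > 0$, which is immediate from $\phi_2 > c/4$ in (\ref{ordering}).
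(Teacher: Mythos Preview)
Your proposal is correct and follows essentially the same approach as the paper: both arguments hinge on the monotonicity $\partial_b \mathfrak{L} > 0$ from Lemma~\ref{theorem-increasing}, the linearized period at the center boundary (your $L_-(a) = 2\pi/\sqrt{U''(\phi_2)/2}$ is exactly the paper's $2\pi/\omega$ with $\omega^2$ as in (\ref{lin-freq})), and the implicit function theorem. The only minor organizational difference is that you establish existence pointwise in $a$ by showing $L$ lies in the range $(L_-(a),\infty)$ via the homoclinic limit $b\to b_+(a)^-$, whereas the paper instead pins down the two endpoints of the curve on the boundaries $a=0$ and $a=a_-(b)$ and interpolates; both are valid, and the paper additionally notes that $\phi_2\mapsto\omega^2$ is bijective, which makes $a_L$ unique rather than merely the smallest root.
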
 

\begin{proof}
It follows from (\ref{dependence-b-L}) that the mapping $b \mapsto L = \mathfrak{L}(0,b) \in (0,\infty)$ is one-to-one and 
onto at the boundary $a = 0$, where $b \in(-c^2,0)$. The limiting $L$-periodic 
wave has a peaked profile $\phi$ on the boundary $a = 0$.

Similarly, at the boundary $b = b_-(a)$, the limiting $L$-periodic wave corresponds to the constant wave $\phi = \phi_2$ and the period $L$ is found from the linearization of the second-order equation (\ref{second-order}) at $\phi = \phi_2$. A simple computation for $\varphi := \phi - \phi_2$ yields the linearized equation in the form 
\begin{equation*}
\varphi'' + \left ( \frac{3a}{(c-\phi_2)^4} - 1 \right )\varphi = 0.
\end{equation*}
Since $a = \phi_2 (c-\phi_2)^3$ on the boundary $b = b_-(a)$, it follows that 
the mapping 
\begin{equation}
\label{lin-freq}
\phi_2 \mapsto \omega^2 :=  \frac{3 \phi_2}{c - \phi_2} -1\in (0,\infty)
\end{equation}
is one-to-one and onto for $\phi_2 \in (c/4,c)$. Therefore, the mapping $a \mapsto L = \mathcal{L}(a,b_-(a)) \in (0,\infty)$ is one-to-one and onto at the boundary $b = b_-(a)$. For every $L \in (0,\infty)$, there exists a unique root of $L = \mathfrak{L}(a,b_-(a))$, which we denote by $a_L$.

Thus, for every fixed $c > 0$ and $L > 0$, there exists exactly one $L$-periodic solution on the left and right boundaries. Since $\mathfrak{L}(a,b)$ is smooth in $(a,b)$ and it is strictly increasing in $b$ by Lemma \ref{theorem-increasing}, the existence of the $C^1$ mapping 
$a \mapsto b = \mathcal{B}_L(a)$ for $a \in (0,a_L)$ follows by the implicit function theorem for $\mathfrak{L}(a,b) = L$ for every fixed $L > 0$. Indeed, $\partial_a \mathfrak{L} + \mathcal{B}_L'(a) \partial_b \mathfrak{L} = 0$ and since $\partial_b \mathfrak{L} > 0$, $\mathcal{B}_L'(a)$ is uniquely defined for every $a \in (0,a_L)$. Since $\phi$ is smooth with respect to parameters by Lemma \ref{lem-trav}, the mapping $a \mapsto \phi = \Phi_L(\cdot,a) \in H^{\infty}_{\rm per}$ is $C^1$ along the curve $b = \mathcal{B}_L(a)$.
\end{proof}

\begin{remark}
	\label{remark-fixed-period}
	The mapping $b \mapsto \phi = \Psi_L(\cdot,b) \in H^{\infty}_{\rm per}$ may not be $C^1$ along the curve $b = \mathcal{B}_L(a)$ because of the non-monotonicity of $\mathfrak{L}(a,b)$ with respect to $a$ shown in  Lemma \ref{lem-nonmonotonicity}. In particular, the mapping $b \mapsto \phi = \Psi_L(\cdot,b) \in H^{\infty}_{\rm per}$ is not $C^1$ at the point where $\mathcal{B}_L'(a) = 0$.
\end{remark}

We next characterize the negative and zero eigenvalues of the Hessian operator $\mathcal{L}$ under the two constraints defining $X_0$ given by (\ref{orth-cond}). The restriction of $\mathcal{L}$ onto $X_0$ is denoted by $\mathcal{L} |_{X_0}$ 
with the corresponding notations $n(\mathcal{L}|_{X_0})$ for the number of negative eigenvalues, taking into account their multiplicities, and $z(\mathcal{L}|_{X_0})$ for the multiplicity of the zero eigenvalue. 
The following lemma gives the count of negative and zero eigenvalues under the two constraints. 

\begin{lemma}
	\label{lem-count}
	Let $a \mapsto b = \mathcal{B}_L(a)$ and $a \mapsto \phi = \Phi_L(\cdot,a) \in H^{\infty}_{\rm per}$ be the $C^1$ mappings of Lemma \ref{lemma-fixed-period}. Assume that $\mathcal{B}_L'(a) \neq 0$ and denote 	
	$$
	\mathcal{M}_L(a) := M(\Phi_L(\cdot,a)) \quad \mbox{\rm and} \quad 
	\mathcal{F}_L(a) := F(\Phi_L(\cdot,a)),
	$$
	where $M(u)$ and $F(u)$ are given by (\ref{Mu}) and (\ref{Fu}). Then, $n(\mathcal{L}|_{X_0}) = 0$ and $z(\mathcal{L}|_{X_0}) = 1$ if and only if the mapping 
	\begin{equation}
	\label{slope-criterion}
	a \mapsto \frac{\mathcal{F}_L(a)}{\mathcal{M}_L(a)^3}
	\end{equation}
	is strictly decreasing and,  for  $\mathcal{B}_L'(a) < 0$, additionally, the mapping $a \mapsto \mathcal{M}_L(a)$ is strictly increasing. 
\end{lemma}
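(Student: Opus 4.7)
The proof follows the Grillakis--Shatah--Strauss / Kapitula--Promislow framework for counting negative eigenvalues of a self-adjoint operator under linear constraints. Since $\langle 1, \phi'\rangle = 0$ and $\langle \phi^2, \phi'\rangle = \tfrac{1}{3}[\phi^3]_0^L = 0$ by periodicity, the one-dimensional kernel $\ker \mathcal{L} = \mathrm{span}(\phi')$ (from Lemma \ref{lem-Floquet} and Corollary \ref{lem-degenerate}, using $\mathcal{B}_L'(a) \neq 0$) is contained in $X_0$, and both constraint vectors $g_1 = 1$, $g_2 = \phi^2$ lie in $\mathrm{Range}(\mathcal{L}) = (\phi')^\perp$. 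The standard formula then reads
$$
n(\mathcal{L}|_{X_0}) = n(\mathcal{L}) - n(D) - z(D), \qquad z(\mathcal{L}|_{X_0}) = z(\mathcal{L}) + z(D),
$$
where $D$ is the symmetric $2\times 2$ matrix with entries $D_{ij} = \langle \mathcal{L}^{-1} g_i, g_j\rangle$.

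The main technical step is to construct $L$-periodic representatives of $\mathcal{L}^{-1}(1)$ and $\mathcal{L}^{-1}(\phi^2)$. The relations (\ref{der-phi}) give $\mathcal{L}(4\partial_b\phi) = 1$, and the rearranged Euler--Lagrange equation (\ref{EL}) yields $\mathcal{L}\phi = b/4 - \phi^2/2$, hence $\mathcal{L}(-2\phi + 2b\,\partial_b\phi) = \phi^2$. Neither formal representative is $L$-periodic because $\mathfrak{L}(a,b)$ depends on $b$. Passing to the fixed-period curve and differentiating $\mathcal{L}(\Phi_L)\Phi_L = \mathcal{B}_L(a)/4 - \Phi_L^2/2$ in $a$ gives $\mathcal{L}\,\partial_a\Phi_L = \mathcal{B}_L'(a)/4$. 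Since $\mathcal{B}_L'(a) \neq 0$, we may choose
$$
\mathcal{L}^{-1}(1) = \frac{4\,\partial_a\Phi_L}{\mathcal{B}_L'(a)}, \qquad \mathcal{L}^{-1}(\phi^2) = -2\phi + \frac{2b\,\partial_a\Phi_L}{\mathcal{B}_L'(a)},
$$
both of which are genuine $L$-periodic functions in $H^\infty_{\mathrm{per}}$.

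A direct integration using $\int \partial_a\Phi_L\,dx = \mathcal{M}_L'(a)$ and $\int \phi^2\,\partial_a\Phi_L\,dx = 2\mathcal{F}_L'(a)$ produces
$$
D_{11} = \frac{4\mathcal{M}_L'(a)}{\mathcal{B}_L'(a)}, \quad D_{12} = D_{21} = \frac{8\mathcal{F}_L'(a)}{\mathcal{B}_L'(a)}, \quad D_{22} = -12\mathcal{F}_L(a) + \frac{4b\,\mathcal{F}_L'(a)}{\mathcal{B}_L'(a)}.
$$
Symmetry $D_{12} = D_{21}$ is the identity $4\mathcal{F}_L'(a) = b\,\mathcal{M}_L'(a) - \mathcal{B}_L'(a)\mathcal{M}_L(a)$, obtained independently by testing $\mathcal{L}\phi = b/4 - \phi^2/2$ against $\partial_a\Phi_L$ and integrating. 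Using this identity to simplify gives the key formula
$$
\det D = \frac{16\,\mathcal{M}_L(a)^4}{\mathcal{B}_L'(a)} \, \frac{d}{da}\!\left(\frac{\mathcal{F}_L(a)}{\mathcal{M}_L(a)^3}\right).
$$

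Finally, $n(\mathcal{L}|_{X_0}) = 0$ and $z(\mathcal{L}|_{X_0}) = 1$ are equivalent to $z(D) = 0$ together with $n(D) = n(\mathcal{L})$. From $\mathcal{B}_L'(a) = -\partial_a\mathfrak{L}/\partial_b\mathfrak{L}$ with $\partial_b\mathfrak{L} > 0$ (Lemma \ref{theorem-increasing}) and Corollary \ref{lem-degenerate}, $n(\mathcal{L}) = 1$ when $\mathcal{B}_L'(a) > 0$ and $n(\mathcal{L}) = 2$ when $\mathcal{B}_L'(a) < 0$. In the first case, $n(D) = 1$ with $z(D) = 0$ amounts to $\det D < 0$, which via the formula above is equivalent to the strict decrease of $a \mapsto \mathcal{F}_L(a)/\mathcal{M}_L(a)^3$. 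In the second case, $n(D) = 2$ with $z(D) = 0$ means $D$ is negative definite: $\det D > 0$ yields the same strict decrease, while $D_{11} < 0$ combined with $\mathcal{B}_L'(a) < 0$ forces $\mathcal{M}_L'(a) > 0$, i.e.\ $\mathcal{M}_L$ strictly increasing. The main obstacle I anticipate is precisely the periodicity issue in step two: the formal inverses built from $\partial_b\phi$ fail to be $L$-periodic, and the hypothesis $\mathcal{B}_L'(a) \neq 0$ is exactly what permits passage to the fixed-period derivative $\partial_a\Phi_L$ as the correct $L$-periodic representative.
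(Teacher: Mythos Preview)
Your proof is correct and reaches the same conclusion as the paper, but the route to the determinant formula is genuinely different.

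The paper realizes $\mathcal{L}^{-1}\phi^2$ via the second relation in (\ref{der-phi}), namely $\mathcal{L}\partial_c\phi = -b/(4c) - \phi^2/(2c)$, so that $\mathcal{L}^{-1}1 = 4\partial_b\phi$ and $\mathcal{L}^{-1}\phi^2 = -2c\partial_c\phi - 2b\partial_b\phi$ along the fixed-period curve (reparametrized by $b$). This introduces derivatives in the two parameters $(b,c)$, and the paper then invokes the scaling transformation (\ref{scaling}) to collapse them into a single $\beta$-derivative, arriving at $\det S = 16c^2\hat{\mathcal{M}}_L^4\,\tfrac{d}{d\beta}\bigl(\hat{\mathcal{F}}_L/\hat{\mathcal{M}}_L^3\bigr)$. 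You instead exploit the Euler--Lagrange identity $\mathcal{L}\phi = b/4 - \phi^2/2$, so that $-2\phi$ together with a multiple of $\partial_a\Phi_L$ already inverts $\phi^2$; the symmetry relation $4\mathcal{F}_L' = b\mathcal{M}_L' - \mathcal{B}_L'\mathcal{M}_L$ then yields the determinant directly in the $a$-variable. The two determinant formulas agree via the chain rule $d\beta/da = \mathcal{B}_L'(a)/c^2$ and the scale-invariance of $\mathcal{F}_L/\mathcal{M}_L^3$.

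Your approach has the advantage of never leaving the single parameter $a$ and never appealing to the scaling symmetry; it is more self-contained. The paper's approach, on the other hand, makes the role of the scaling symmetry explicit and explains structurally why the stability criterion is a degree-zero homogeneous quantity. The remaining case analysis (sign of $\mathcal{B}_L'(a)$ linked to $n(\mathcal{L})$ via $\partial_a\mathfrak{L}$, then reading off $n(D)$ from $\det D$ and $D_{11}$) is identical in both arguments.
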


\begin{proof}
Recall that the counting formulas for the negative and zero eigenvalues of $\mathcal{L}|_{X_0}$, see e.g.~\cite{NPL1,NLP2} and references therein, are given by 
\begin{equation}
\label{count-neg}
\left\{ \begin{array}{l}
n(\mathcal{L}|_{X_0}) = n(\mathcal{L}) - n_0 - z_0, \\
z(\mathcal{L}|_{X_0})= z(\mathcal{L}) + z_0,
\end{array} \right.
\end{equation}
where $n_0$ and $z_0$ are the numbers of negative and zero  eigenvalues (counting their multiplicities) of the matrix of projections
\begin{equation}
\label{matrix-P}
S := \left[ \begin{matrix}\langle \mathcal{L}^{-1} 1, 1 \rangle & 
\langle \mathcal{L}^{-1} \phi^2, 1 \rangle \\
\langle \mathcal{L}^{-1} 1, \phi^2 \rangle & 
\langle \mathcal{L}^{-1} \phi^2, \phi^2\rangle \end{matrix} \right].
\end{equation}
It follows from (\ref{der-phi}) with $\phi = \Phi_L(\cdot;a)$ being the smooth $L$-periodic solution along the curve $b = \mathcal{B}_L(a)$ 
with $\mathcal{B}_L'(a) \neq 0$ that 
\begin{equation}
\label{matrix-P-inverse}
\mathcal{L}^{-1} 1 = 4 \partial_b \phi, \quad \mathcal{L}^{-1} \phi^2 = -2c \partial_c \phi - 2b \partial_b \phi.
\end{equation} 
For each part of the curve $b = \mathcal{B}_L(a)$ for which $\mathcal{B}_L'(a) \neq 0$ we introduce the inverse mapping $a = \mathcal{B}_L^{-1}(b)$ 
and redefine $\Phi_L(\cdot;\mathcal{B}_L^{-1}(b)) \equiv \Phi_L(\cdot;b)$, $\mathcal{M}_L(\mathcal{B}_L^{-1}(b)) \equiv \mathcal{M}_L(b)$, 
and  $\mathcal{F}_L(\mathcal{B}_L^{-1}(b)) \equiv \mathcal{F}_L(b)$. 
Due to (\ref{Mu}), (\ref{Fu}), and (\ref{matrix-P-inverse}), matrix $S$ in (\ref{matrix-P}) can be rewritten in the form
\begin{eqnarray*}
S = \left[ \begin{matrix} 4 \partial_b \mathcal{M}_L & -2 c \partial_c \mathcal{M}_L - 2b \partial_b \mathcal{M}_L \\
8	\partial_b \mathcal{F}_L & -4c \partial_c \mathcal{F}_L  
- 4b \partial_b \mathcal{F}_L \end{matrix} \right],
\end{eqnarray*}
so that we obtain
\begin{equation}
\label{det-P}
\det(S) = 16 c \left[ \partial_c \mathcal{M}_L \partial_b \mathcal{F}_L 
- \partial_b \mathcal{M}_L \partial_c \mathcal{F}_L \right].
\end{equation}
Due to the scaling transformation (\ref{scaling}), we can write 
\begin{equation}
\label{scal-transform}
b = c^2 \beta, \quad \Phi_L(\cdot;b) = c \hat{\Phi}(\cdot;\beta), \quad 
\mathcal{M}_L(b) = c \hat{\mathcal{M}}_L(\beta), \quad 
\mathcal{F}_L(b) = c^3 \hat{\mathcal{F}}_L(\beta),
\end{equation}
where $\beta$ and the hat functions are $c$-independent. 
Substituting the transformation (\ref{scal-transform}) into (\ref{det-P}) yields
\begin{align*}
\det(S) & = 16 c^2 \left[ \hat{\mathcal{M}}_L(\beta) \hat{\mathcal{F}}_L'(\beta) - 3 \hat{\mathcal{F}}_L(\beta) \hat{\mathcal{M}}_L'(\beta) \right] \\
&= 16 c^2 \hat{\mathcal{M}}_L(\beta)^4 \frac{d}{d\beta} \left[ \frac{\hat{\mathcal{F}}_L(\beta)}{\hat{\mathcal{M}}_L(\beta)^3} \right].
\end{align*}

Recall that $\partial_a \mathfrak{L} + \mathcal{B}_L'(a) \partial_b \mathfrak{L} = 0$
and $\partial_b \mathfrak{L} > 0$ by Lemma \ref{theorem-increasing}.
For the part of the curve $b = \mathcal{B}_L(a)$ with $\mathcal{B}_L'(a) > 0$, 
we have $\partial_a \mathfrak{L} < 0$ so that $n(\mathcal{L}) = 1$ 
and $z(\mathcal{L}) = 1$ by Lemma \ref{lem-Floquet}. If 
\begin{equation}
\label{stab-criterion-1}
\frac{d}{d\beta} \left[ \frac{\hat{\mathcal{F}}_L(\beta)}{\hat{\mathcal{M}}_L(\beta)^3} \right] < 0,
\end{equation}
then $\det(S) < 0$ so that $S$ has one positive and one negative eigenvalue. 
Then, $n_0 = 1$ and $z_0 = 0$ so that the counting formulas (\ref{count-neg}) give $n(\mathcal{L} |_{X_0}) = 0$ and $z(\mathcal{L}|_{X_0}) = 1$. 
Since $\mathcal{B}_L'(a) > 0$ for this part of the curve $b = \mathcal{B}_L(a)$, 
the criterion (\ref{stab-criterion-1}) is equivalent to the condition that the mapping (\ref{slope-criterion}) is strictly decreasing.

For the part of the curve $b = \mathcal{B}_L(a)$ with $\mathcal{B}_L'(a) < 0$, 
we have $\partial_a \mathfrak{L} > 0$ so that $n(\mathcal{L}) = 2$ 
and $z(\mathcal{L}) = 1$ by Lemma \ref{lem-Floquet}. If 
\begin{equation}
\label{stab-criterion-2}
\frac{d}{d\beta} \left[ \frac{\hat{\mathcal{F}}_L(\beta)}{\hat{\mathcal{M}}_L(\beta)^3} \right] > 0 \quad \mbox{\rm and} \quad \frac{d}{d\beta} \hat{\mathcal{M}}_L(\beta) < 0,
\end{equation}
then $\det(S) > 0$ in view of the first condition, and therefore the symmetric matrix $S$ has two negative eigenvalues since it is negative definite in view of the second condition. 
Then, $n_0 = 2$ and $z_0 = 0$ so that the counting formulas (\ref{count-neg}) give $n(\mathcal{L} |_{X_0}) = 0$ and $z(\mathcal{L}|_{X_0}) = 1$. 
Since $\mathcal{B}_L'(a) < 0$ for this part of the curve $b = \mathcal{B}_L(a)$, 
the criterion (\ref{stab-criterion-2}) is equivalent to the condition that the mapping (\ref{slope-criterion}) is strictly decreasing and the mapping $a \mapsto \mathcal{M}_L(a)$ is strictly increasing by the chain rule. 
\end{proof}

\begin{remark}
	\label{remark-stability}
	It is well-known (see, e.g., \cite{HK08}) that if $n(\mathcal{L}|_{X_0}) = 0$ and $z(\mathcal{L}|_{X_0}) = 1$, then the spectrum of $J \mathcal{L}$ in $L^2_{\rm per}$ is located on the imaginary axis, which implies that the $L$-periodic wave is spectrally stable. Indeed, 
	let $w \in {\rm Dom}(J \mathcal{L}) \subset L^2_{\rm per}$ be the eigenvector of the spectral problem $J \mathcal{L} w = \lambda w$ for the eigenvalue $\lambda \in \mathbb{C}$. By the same computations as in Lemma \ref{lem-constraints}, we have $w \in X_0$ if $\lambda \neq 0$. For every $w \in {\rm Dom}(J \mathcal{L}) \cap X_0$, we obtain 
\begin{equation*}
\lambda \langle \mathcal{L} w,w \rangle = 
\langle \mathcal{L} J \mathcal{L} w,w \rangle = 
-\langle \mathcal{L} w, J \mathcal{L} w \rangle 
= - \bar{\lambda} \langle \mathcal{L} w,w \rangle, 
\end{equation*}
so that 
\begin{equation*}
(\lambda + \bar{\lambda}) \langle \mathcal{L} w,w \rangle = 0.
\end{equation*}
Assume that $\langle \mathcal{L} w,w \rangle = 0$. Since $w \in X_0 \subset L^2_{\rm per}$, the conditions 
$n(\mathcal{L}|_{X_0}) = 0$ and $z(\mathcal{L}|_{X_0}) = 1$ imply 
that $\langle \mathcal{L} w,w \rangle = 0$ can be satisfied 
if and only if $w \in {\rm Ker}(\mathcal{L})$ which contradicts $\lambda \neq 0$. Hence, 
$\langle \mathcal{L} w,w \rangle > 0$, which implies that $\lambda + \bar{\lambda} = 0$ and so 
$\lambda \in i \mathbb{R}$.
\end{remark}

Finally, we confirm the validity of the stability criterion of Lemma \ref{lem-count} for every point in a neighborhood of the boundary 
$a_-(b)$ where the periodic solution is constant. 

\begin{lemma}
	\label{lem-Stokes}
Fix $c > 0$, $b \in (-c^2,\frac{1}{8}c^2)$ and denote $a_L:=a_-(b)$ for fixed period $L = \frac{2\pi}{\omega}$, where $\omega^2$ is given by (\ref{lin-freq}). There exists $\varepsilon > 0$ such that for every $a \in (a_L-\varepsilon,a_L)$, the mapping (\ref{slope-criterion}) is strictly decreasing and the mapping $a \mapsto \mathcal{M}_L(a)$ is strictly increasing. 
\end{lemma}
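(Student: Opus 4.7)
The plan is a small-amplitude Stokes-type expansion of the $L$-periodic wave bifurcating from the constant solution $\phi_*$ at the boundary $a = a_L$, together with a direct computation of the leading terms in $\mathcal{M}_L(a)$ and $\mathcal{F}_L(a)/\mathcal{M}_L(a)^3$. Here $\phi_* = \frac{1}{4}(c + \sqrt{c^2-8b}) \in (c/4,c)$ is the local minimum of $U$ corresponding to the boundary point $(a_L,b)$, and $\omega$ is the linearization frequency given by \eqref{lin-freq}.

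I would use the reduced second-order ODE $\phi'' = \phi - a(c-\phi)^{-3}$, equivalent to \eqref{second-order} (and notably $b$-free). Writing $\psi := \phi - \phi_*$, $\mu := (c-\phi_*)^{-1}$, and $\delta a := a - a_L$, Taylor expansion of the right-hand side around $(\phi_*,a_L)$ yields
\begin{equation*}
\psi'' + \omega^2\psi = -\delta a\,\mu^3 - 3\delta a\,\mu^4 \psi - 6\phi_*\mu^2\psi^2 - 10\phi_*\mu^3\psi^3 + \mathcal{O}\!\left(|\psi|^4 + |\delta a||\psi|^2\right),
\end{equation*}
where $\omega^2 = (4\phi_*-c)\mu$. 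Inserting the Stokes ansatz
\begin{equation*}
\psi = \epsilon\cos(\omega x) + \epsilon^2\bigl(A + B\cos(2\omega x)\bigr) + \epsilon^3 \psi_3(x) + \mathcal{O}(\epsilon^4),\qquad \delta a = \epsilon^2 a_2 + \mathcal{O}(\epsilon^4),
\end{equation*}
and matching at order $\epsilon^2$ gives $B = \phi_*\mu^2/\omega^2$ and $\omega^2 A = -a_2\mu^3 - 3\phi_*\mu^2$. The Fredholm solvability condition at order $\epsilon^3$, the vanishing of the $\cos(\omega x)$-Fourier coefficient of the forcing, uses $\omega^2 - 4\phi_*\mu = -c\mu \neq 0$ to determine uniquely
\begin{equation*}
a_2 = -\frac{5\phi_*(c-\phi_*)}{2} < 0,\qquad A = -\frac{\phi_*}{2(4\phi_*-c)(c-\phi_*)} < 0,
\end{equation*}
both negative for every $\phi_* \in (c/4,c)$.

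Integrating the expansions of $\phi$ and $\phi^3$ over $\mathbb{T}_L$, and exploiting that $\psi_3$ has only odd harmonics and hence zero mean, one obtains
\begin{equation*}
\mathcal{M}_L(a) = L\phi_* + L A\,\epsilon^2 + \mathcal{O}(\epsilon^4),\qquad \mathcal{F}_L(a) = \frac{L\phi_*^3}{6} + \frac{L\epsilon^2}{6}\left(3\phi_*^2 A + \frac{3\phi_*}{2}\right) + \mathcal{O}(\epsilon^4).
\end{equation*}
The key observation is that the $A$-terms cancel in the normalised ratio:
\begin{equation*}
\frac{\mathcal{F}_L(a)}{\mathcal{M}_L(a)^3} = \frac{1}{6L^2}\left[1 + \frac{\epsilon^2}{2\phi_*^2} + \mathcal{O}(\epsilon^4)\right].
\end{equation*}
Combined with $\epsilon^2 = (a-a_L)/a_2 + \mathcal{O}((a-a_L)^2)$ and $a_2 < 0$, differentiating in $a$ shows that this mapping is strictly decreasing on a left neighbourhood of $a_L$. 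Likewise, $\mathcal{M}_L'(a) = LA/a_2 + \mathcal{O}(\epsilon^2) > 0$ because $A$ and $a_2$ share the same sign, which supplies the extra monotonicity of $\mathcal{M}_L$ required when $\mathcal{B}_L'(a) < 0$.

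The main technical step is the order-$\epsilon^3$ solvability analysis yielding the formula for $a_2$. The supporting facts, that the coefficient of $a_2$ in that condition does not vanish, that $a_2$ and $A$ are both negative, and that the coefficient $1/(2\phi_*^2)$ in the expansion of $\mathcal{F}_L/\mathcal{M}_L^3$ is positive, are all elementary consequences of $\phi_* \in (c/4,c)$ and $c > 0$. I expect no serious obstacle beyond careful bookkeeping of signs and orders; in particular, the cancellation of the mean-shift coefficient $A$ in $\mathcal{F}_L/\mathcal{M}_L^3$ is precisely the degree-zero homogeneity phenomenon already highlighted in the introduction, and it renders the leading monotonicity statement insensitive to the potentially delicate precise value of the mean-shift correction.
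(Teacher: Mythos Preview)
Your proposal is correct and follows essentially the same Stokes-expansion approach as the paper, the only cosmetic difference being that you perturb additively ($\psi=\phi-\phi_*$) whereas the paper perturbs multiplicatively ($\phi=\phi_2(1+\varphi)$); your $a_2=-\tfrac{5}{2}\phi_*(c-\phi_*)$ and mean-shift $A$ agree exactly with the paper's $\alpha_2=-\tfrac{5}{2\eta^2}$ after the change of variables. One small arithmetic slip: the leading coefficient in $\mathcal{F}_L/\mathcal{M}_L^3$ should be $\tfrac{3\epsilon^2}{2\phi_*^2}$ rather than $\tfrac{\epsilon^2}{2\phi_*^2}$ (equivalently $\tfrac{3}{2}A^2$ in the paper's notation), but this does not affect the sign and hence the monotonicity conclusion stands.
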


\begin{proof}
	Let us parameterize the boundary $a_-(b)$ by $\phi_2 \in (c/4,c)$ as 
	in (\ref{a-minus}). We substitute
\begin{equation}
\label{substitution-stokes}
	\phi = \phi_2 (1 + \varphi), \quad a = \phi_2 (c - \phi_2)^3 (1 + \alpha)
\end{equation}
for a function $\varphi$ and a scalar $\alpha$	into (\ref{second-order}) and obtain 
	\begin{equation}
	\label{eq-stokes}
		\varphi'' - \varphi + \frac{1+\alpha}{(1-\eta^{-1} \varphi)^3} - 1 = 0,
	\end{equation}
	where $\eta := (c-\phi_2)/\phi_2 \in (0,3)$ as in the proof of Lemma \ref{theorem-increasing}. The period $L > 0$ is fixed for fixed $c > 0$ and $\phi_2\in (c/4,c)$ by $L = \frac{2\pi}{\omega}$, where $\omega^2$ is given by (\ref{lin-freq}). 	We use the Stokes expansion for even, $L$-periodic solutions with their maximum at $x = 0$, see also  \cite{NPL1,NLP2},
	$$
	\varphi(x) = A \cos(\omega x) + A^2 \varphi_2(x) + A^3 \varphi_3(x) + \mathcal{O}(A^4), \quad \alpha = A^2 \alpha_2 + \mathcal{O}(A^4),
	$$
	where $A>0$ and $\varphi_2, \varphi_3$ are even, $L$-periodic functions. Substituting this expansion into the linearization of \eqref{eq-stokes} and using the definition of $\omega$ in \eqref	{lin-freq},  we obtain  a sequence of compatibility conditions at each order,
\begin{align*}
\mathcal{O}(A^2) :& \quad \varphi_2'' + \omega^2 \varphi_2 + 6 \eta^{-2} \cos^2(\omega x) + \alpha_2 = 0, \\
\mathcal{O}(A^3) :& \quad \varphi_3'' + \omega^2 \varphi_3 + 12 \eta^{-2} \cos(\omega x) \varphi_2 + 10 \eta^{-3} \cos^3(\omega x) + 3 \alpha_2 \eta^{-1} \cos(\omega x) = 0, 
\end{align*}
from which the correction terms can be found. The solution  to the inhomogeneous equation at the order $\mathcal{O}(A^2)$ is given by 
\begin{align*}
\varphi_2(x) = - \frac{3 \eta^{-2} + \alpha_2}{\omega^2} + \frac{\eta^{-2}}{\omega^2} \cos(2\omega x),
\end{align*}
where the  solutions of the homogeneous equation $\varphi_2'' + \omega^2 \varphi_2 = 0$ 
have been set to zero due to the arbitrariness of the parameter $A$. To ensure that the solution to the inhomogeneous equation at the order $\mathcal{O}(A^3)$ is $L$-periodic and not unbounded, we have to remove the term $\cos(\omega x)$ from the source term. After substituting the solution $\varphi_2$ found in the previous step and recalling that 
\begin{align*}
2 \cos(\theta) \cos(2\theta) &= \cos(3\theta) + \cos(\theta), \\ 4\cos^3(\theta) &= \cos(3\theta) + 3\cos(\theta),
\end{align*}
we find that this is the case  if and only if $\alpha_2 = - \frac{5}{2 \eta^2}$. 

Note that if $a \in (a_L-\varepsilon,a_L)$ and the period $L > 0$ is fixed 
along a curve in the $(a,b)$-plane, see Figure \ref{fig-dependence}, then 
the small deviation in $a$ implies a small deviation in $\alpha = A^2 \alpha_2 + \mathcal{O}(A^4)$ in view of \eqref{substitution-stokes}, and hence also in $A$ since $\alpha_2$ is fixed. 

The next step is to expand $\mathcal{M}_L(a)$ and $\mathcal{F}_L(a)$ in terms of $A^2$. Note that we can write these expressions in terms of the new variable $\varphi$ and find that 
\begin{align*}
\mathcal{M}_L(a) &= \phi_2 \left( L + \oint \varphi dx \right), \\
\mathcal{F}_L(a) &= \frac{1}{6} \phi_2^3 \left( L + 3 \oint \varphi dx + 3 \oint \varphi^2 dx + \oint \varphi^3 dx \right). 
\end{align*}
After straightforward computations we obtain that the expansions in terms of small $A^2$ are given by
\begin{align*}
\mathcal{M}_L(a) &= \phi_2 L \left( 1 - \frac{1}{2 \eta^2 \omega^2} A^2 + \mathcal{O}(A^4) \right), \\
\mathcal{F}_L(a) &= \frac{1}{6} \phi_2^3 L \left( 1 + \frac{3}{2} (1 - \eta^{-2} \omega^{-2}) A^2 + \mathcal{O}(A^4) \right),
\end{align*}
so that 
\begin{align*}
\frac{\mathcal{F}_L(a)}{\mathcal{M}_L(a)^3} = \frac{1}{6 L^2} \left[ 1 + \frac{3}{2} A^2 + \mathcal{O}(A^4) \right].
\end{align*}
Since $\alpha_2 < 0$, we have $\frac{d a}{dA^2} < 0$. It follows 
from $\frac{d \mathcal{M}_L(a)}{d A^2} < 0$ so that the mapping $a \mapsto \mathcal{M}_L(a)$ is strictly increasing. 
Similarly, $\frac{d}{d A^2} \frac{\mathcal{F}_L(a)}{\mathcal{M}_L(a)^3} > 0$ 
so that the mapping (\ref{slope-criterion}) is strictly decreasing. 
\end{proof}

\begin{remark}
	The proof of Theorem \ref{theorem-stability} is complete with the results 
	of Lemma \ref{lemma-fixed-period}, Lemma \ref{lem-count}, Remark \ref{remark-stability}, and Lemma \ref{lem-Stokes}.
\end{remark}

\begin{remark}
	It is tempting to conjecture, similarly to what was proven for the CH equation \cite{GMNP}, that the monotonicity of the mapping (\ref{slope-criterion}) along the entire curve with $b = \mathcal{B}_L(a)$ is the only energy stability criterion needed for Theorem \ref{theorem-stability}, whereas the information on the monotonicity of the mapping $a \mapsto \mathcal{M}_L(a)$ is unnecessary and the exceptional point $\mathcal{B}_L'(a) \neq 0$ is irrelevant. However, we are not able to prove this conjecture by only using properties of the Hessian operator $\mathcal{L}$, which is related to the differential equation (\ref{nu-phi-equation}). The successful strategy in \cite{GMNP} relies on the linearized operator for the second-order equation \eqref{second-order}, 
	which is related to the alternative Hamiltonian formulation of the CH equation that is missing for the DP equation, unfortunatley. As a result, the linearized operator for the second-order equation \eqref{second-order} does not define a linearized evolution in Hamiltonian form and therefore, positivity of this operator under two constraints no longer implies spectral stability of  smooth periodic waves. For this reason we have not replicated the strategy of \cite{GMNP} for the CH equation here, but instead rely on the standard Hamiltonian formulation of the DP equation.
\end{remark}

%%%%%%%%%%%%%%%%%%%%%%%%%%%%%%%%%%%%%%%%%%%%%%%%%%%%%%%%%%%%%%%%%%%%%%%%%%%%%%%%%%%%%%%%%%%%%%%%%%%%%%%%%%%%%%%%%%

\bibliographystyle{unsrt}

\end{document}